\documentclass[11pt]{article}
\usepackage[top=27mm, bottom=27mm, left=22mm, right=22mm]{geometry}
\usepackage[square, comma, sort&compress, numbers]{natbib}
\usepackage{amsthm}
\usepackage{hyperref}
\usepackage{amsfonts}
\usepackage{mathrsfs}
\usepackage{comment}
\usepackage{amsmath}
\usepackage{amssymb}
\usepackage{amscd}
\usepackage{graphicx}
\usepackage{booktabs,tabularx,array}
\usepackage{booktabs,tabularx,array,multirow,amsmath}
\usepackage[all]{xy}
\usepackage{titlesec}
\usepackage{enumerate}
\usepackage{bm}
\usepackage{relsize}
\usepackage{enumitem}
\usepackage{color}
\usepackage{dsfont}
\usepackage{arydshln}
\usepackage[capitalize]{cleveref}
\usepackage{appendix}
\usepackage{ulem}
\newtheorem{theorem}{Theorem}[section]
\newtheorem{lemma}[theorem]{Lemma}
\newtheorem{proposition}[theorem]{Proposition}

\newtheorem{conjecture}[theorem]{Conjecture}
\newtheorem{definition}[theorem]{Definition}

\newtheorem{claim}[theorem]{Claim}

\newtheorem{fact}[theorem]{Fact}

\newcommand{\ma}{\mathcal}

\newcommand{\qbinom}[2]{\genfrac{[}{]}{0pt}{}{#1}{#2}}
\global\long\def\f{\mathcal{F}}

\begin{document}

\title{An Erd\H{o}s Matching Conjecture for Vector Spaces}

\date{}

\makeatletter
\def\thanks#1{\protected@xdef\@thanks{\@thanks
        \protect\footnotetext{#1}}}
\makeatother

\author{Baoyan Feng, Chong Shangguan, Yulin Yang, Chenyang Zhang\thanks{The authors are with the Research Center for Mathematics and Interdisciplinary Sciences, Shandong University, Qingdao 266237, China, and the Frontiers Science Center for Nonlinear Expectations, Ministry of Education, Qingdao 266237, China. Emails: (fengbaoyan2002@163.com, theoreming@163.com, forestyoung@mail.sdu.edu.cn, lxzcyang@163.com)}}
\maketitle

\begin{abstract}
We study a vector-space analogue of the Erd\H{o}s Matching Conjecture.
Let $m_q(n,k,s)$ denote the maximum cardinality of a family of $k$-dimensional subspaces of an $n$-dimensional vector space over $\mathbb F_q$ with no $s+1$ members whose sum is direct. 
Two natural constructions provide lower bounds. The first consists of all $k$-subspaces contained in a fixed $((s+1)k-1)$-dimensional subspace; 
the second consists of all $k$-subspaces that intersect a fixed $s$-dimensional subspace nontrivially. These constructions motivate the following vector-space analogue of the Erd\H{o}s Matching Conjecture:
for all $n\ge (s+1)k$,
$$m_q(n,k,s)=\max\left\{\qbinom{(s+1)k-1}{k}_q,~\qbinom{n}{k}_q-q^{ks}\qbinom{n-s}{k}_q\right\}.$$

We prove this conjecture when $k=2$, when $n=(s+1)k$, and when $n$ is sufficiently large. 
In particular, the case $k=2$ may be viewed as a vector-space analogue of the Erd\H{o}s--Gallai theorem. 
In the large-$n$ range, we also prove a Hilton--Milner-type stability theorem, determining the largest nontrivial families with this property. 
Finally, we connect this problem with $t$-cover-free families in vector spaces and determine their extremal number up to a lower-order term, extending a recent result of Shan and Zhou for the special case $t=2$. 
The proofs combine Lov\'asz's minimax theorem for matroid matchings, a high-dimensional Hoffman bound for uniform hypergraphs, and packing-design arguments in vector spaces. 
\end{abstract}


\section{Introduction}

For integers $n\ge k\ge 1$, write $[n]:=\{1,\ldots,n\}$ and let $\binom{[n]}{k}$ denote the family of all $k$-subsets of $[n]$. A $k$-uniform hypergraph on $n$ vertices is a family $\mathcal{F}\subseteq \binom{[n]}{k}$, whose members are called edges. The {\it matching number} $\nu(\mathcal{F})$ is the maximum size of a collection of pairwise disjoint edges in $\mathcal{F}$. A central problem in extremal combinatorics is to determine the maximum number of edges in a $k$-uniform hypergraph with bounded matching number. For $s\ge 1$, let $m(n,k,s)$ denote the maximum size of a family $\mathcal{F}\subseteq \binom{[n]}{k}$ satisfying $\nu(\mathcal{F})\le s$; equivalently, $\mathcal{F}$ contains no $s+1$ pairwise disjoint edges. When $n\le (s+1)k-1$, it is immediate that $m(n,k,s)=\binom{n}{k}$. For $n\ge (s+1)k$, there are two natural constructions that provide lower bounds for $m(n,k,s)$:
\begin{align*}
    \mathcal{A}(k,s)=\binom{[(s+1)k-1]}{k},~
    \mathcal{B}(n,k,s)=\left\{F\in\binom{[n]}{k}:F\cap [s]\neq\varnothing\right\}.
\end{align*}

In 1965, Erd\H{o}s \cite{erdos1965problem} famously conjectured that one of these two constructions is optimal.

\begin{conjecture}[Erd\H{o}s Matching Conjecture (EMC for short) \cite{erdos1965problem}]\label{conj:emc}
For all $n\ge (s+1)k$,
\begin{equation*}
m(n,k,s)=\max\left\{\binom{(s+1)k-1}{k},~\binom{n}{k}-\binom{n-s}{k}\right\}.
\end{equation*}
\end{conjecture}

The EMC has been proved for several special cases. When $s=1$, it reduces to the classical Erd\H{o}s--Ko--Rado theorem \cite{erdos1961intersection}. The case $k=1$ is immediate, while the case $k=2$ was settled by Erd\H{o}s and Gallai \cite{erdHos1959maximal}; see also \cite{akiyama1985size,ning2020formula} for two different proofs. The case $k=3$ was resolved through a sequence of works: Frankl, R\"odl, and Ruci\'nski \cite{frankl2012maximum} proved the conjecture for $n\ge 4s$, Łuczak and Mieczkowska \cite{luczak2014erdHos} established it for all $n$ and sufficiently large $s$, and Frankl \cite{frankl2017maximum} completed the proof for $k=3$. Very recently, Frankl, Lu, Ma, and Wu proved the conjecture for $k=4$ and sufficiently large $n\ge 5s$ \cite{frankl2026towards}, and Hou, Hu, and Liu \cite{hou2026finite} proved the conjecture for $s\ge 6961$.

For general $s$ and $k$, a number of partial results are known. The case $n=(s+1)k$ was already implicit in the work of Kleitman \cite{kleitman1968maximal}. In his original paper, Erd\H{o}s \cite{erdos1965problem} proved the conjecture for all $n\ge n_0(k,s)$. Bollob\'as, Daykin, and Erd\H{o}s \cite{bollob1976sets} later improved this to $n\ge 2k^3s$. They also obtained a Hilton--Milner-type stability theorem. After more than thirty years of limited progress, Huang, Loh, and Sudakov \cite{huang2012size} showed that the conjecture holds for $n\ge 3k^2s$. This was further improved by Frankl, Luczak, and Mieczkowska \cite{frankl2012matchings}, who proved the conjecture for $n\ge 2(s+1)\frac{k^2}{\log k}$, and later by Frankl \cite{frankl2013improved}, who established it for $n\ge (2s+1)k-s$. More recently, Frankl and Kupavskii \cite{frankl2022erdHos} proved that the EMC holds whenever $n\ge \frac{5}{3}sk-\frac{2}{3}s$ for sufficiently large $s$. On the other hand, for small $n$, Frankl \cite{frankl2017proof} proved the conjecture for $n\le (s+1)(k+\varepsilon)$, where $\varepsilon$ depends on $k$, and Kolupaev and Kupavskii \cite{kolupaev2023erdHos} further proved it for $n\le (s+1)(k+\frac{1}{100k})$, provided that $s>101k^3$.

Many problems in extremal set theory admit natural analogues in the vector-space setting; see \cite{hsieh1975intersection,frankl1986erdos,chowdhury2010shadows,chowdhury2012shadows,frankl1985intersection,ihringer2021remarks,ihringer2026structure,liu2024mathcal,liao2025isodiametric,etzion2013problems} for several examples related to the present work. Such vector-space analogues are often more subtle than their set-system counterparts, since many techniques developed for finite set systems do not readily extend to vector spaces. For instance, the shifting technique is a powerful tool in the study of extremal problems for set systems \cite{frankl1987shifting}. However, as far as we know, despite considerable effort, no satisfactory vector-space analogue of this technique has yet been developed.

In this paper, we formulate a vector-space analogue of Conjecture~\ref{conj:emc} and establish a connection between it and cover-free families in vector spaces.

\subsection{An Erd\H{o}s Matching Conjecture for vector spaces}

Throughout the paper, $q$ denotes a prime power, and $V$ denotes an $n$-dimensional vector space over $\mathbb{F}_q$. Let $\qbinom{V}{k}$ denote the set of all $k$-dimensional subspaces ($k$-subspaces for short) of $V$. We write $\qbinom{n}{k}_q$ for the Gaussian binomial coefficient, namely, the number of $k$-subspaces of $\mathbb{F}_q^n$.

The vector-space Erd\H{o}s--Ko--Rado theorem (see, e.g., \cite{hsieh1975intersection,frankl1986erdos}) asks for the maximum size of a family $\mathcal{F}\subseteq\qbinom{V}{k}$ in which no two members intersect {\it trivially}; that is, $F_1\cap F_2\neq\{0\}$ for all $F_1,F_2\in\mathcal{F}$, where $0$ denotes the zero vector.

There are at least two natural directions in which one can extend this problem by forbidding larger ``matchings'' in $\qbinom{V}{k}$. One direction, considered by Ihringer \cite{ihringer2021remarks} and by Liu, Yu, Feng, and Li \cite{liu2024mathcal}, is to determine the maximum size of a family $\mathcal{F}\subseteq\qbinom{V}{k}$ containing no $s+1$ members that pairwise intersect trivially.

In this paper, we investigate a different generalization. We study the maximum size of a family $\mathcal{F}\subseteq\qbinom{V}{k}$ containing no $s+1$ members whose sum is {\it direct}. This notion is motivated by its connections to several topics in matroid theory, extremal set theory, and coding theory, including the matroid matching problem \cite{lovasz1980selecting,lovasz1980matroid}, $r$-wise $t$-intersecting families of vector spaces \cite{cao2023r,chowdhury2010shadows,chowdhury2012shadows}, cover-free families of vector spaces \cite{shan2026cover}, and covering Grassmannian codes \cite{etzion2019grassmannian,qian2023covering}.

More formally, for a family $\mathcal{F}\subseteq \qbinom{V}{k}$, let $\nu_q(\mathcal{F})$ denote the largest integer $s$ such that there exist $F_1,\dots,F_s\in\mathcal{F}$ whose sum is direct, that is, $\dim(F_1+\cdots+F_s)=sk$. Thus the problem may be viewed as an extremal problem for matchings in a linear-matroidal setting: the members of $\mathcal F$ are $k$-flats, and a matching is a collection whose union has full rank. In analogy with $m(n,k,s)$, let $m_q(n,k,s)$ denote the maximum size of a family $\mathcal{F}\subseteq \qbinom{V}{k}$ satisfying $\nu_q(\mathcal{F})\le s$.

For $n\le (s+1)k-1$, it is clear that $m_q(n,k,s)=\qbinom{n}{k}_q$. For $n\ge (s+1)k$, there are two straightforward vector-space analogues of $\mathcal{A}(k,s)$ and $\mathcal{B}(n,k,s)$, each containing at most $s$ members whose sum is direct:
\begin{align}
    \mathcal{A}_q(k,s)&=\qbinom{V_1}{k},\text{ where }V_1\text{ is an }((s+1)k-1)\text{-subspace},\label{eq:f1}\\
    \mathcal{B}_q(n,k,s)&=\left\{F\in \qbinom{V}{k}:F\cap V_2\neq\{0\}\right\},\text{ where }V_2\text{ is an }s\text{-subspace}.\label{eq:f2}
\end{align}
It is easy to verify that $|\mathcal{A}_q(k,s)|=\qbinom{(s+1)k-1}{k}_q$ and $|\mathcal{B}_q(n,k,s)|=\qbinom{n}{k}_q-q^{sk}\qbinom{n-s}{k}_q$ (see also \eqref{eq:B_q(n,k,s)}).

We now formulate the following vector-space analogue of Conjecture~\ref{conj:emc}, which asserts that the extremal value of $m_q(n,k,s)$ is attained by one of the two constructions above.

\begin{conjecture}[vector-space EMC]
\label{conj:vsemc}
For all $n\ge (s+1)k$,
\begin{equation*}
    m_q(n,k,s)=\max\left\{\qbinom{(s+1)k-1}{k}_q,~\qbinom{n}{k}_q-q^{sk}\qbinom{n-s}{k}_q\right\}.
\end{equation*}
\end{conjecture}

Since $\nu_q(\mathcal{F})\le 1$ if and only if no two members of $\mathcal{F}$ intersect trivially, the vector-space Erd\H{o}s--Ko--Rado theorem \cite{hsieh1975intersection,frankl1986erdos} implies that $m_q(n,k,1)=\qbinom{n-1}{k-1}_q$ for all $n\ge 2k$. It is also immediate that $m_q(n,1,s)=\qbinom{s}{1}_q$.

In this paper, we prove Conjecture~\ref{conj:vsemc} for the following ranges of parameters.

\begin{theorem}\label{thm:vemc}
Let $n$, $k$, and $s$ be positive integers, and let $q$ be a prime power. Let $V$ be an $n$-dimensional vector space over $\mathbb{F}_q$.
\begin{itemize}
    \item[{\rm (i)}] For $k=2$ and $n\ge 2(s+1)$,
    \begin{equation}\label{eq:vector-space-emc-k=2}
        m_q(n,2,s)=\max\left\{\qbinom{2s+1}{2}_q,~\qbinom{n}{2}_q-q^{2s}\qbinom{n-s}{2}_q\right\}.
    \end{equation}
    Moreover, the extremal families are precisely those among $\mathcal A_q(2,s)$ and $\mathcal B_q(n,2,s)$ whose sizes attain the maximum in \eqref{eq:vector-space-emc-k=2}.

    \item[{\rm (ii)}] For all $n\ge (s+1)k$,
    \begin{equation}\label{eq:vector-space-emc-general}
        m_q(n,k,s)\le \qbinom{s}{1}_{q^k}\qbinom{n-1}{k-1}_q.
    \end{equation}
    In particular, when $n=(s+1)k$,
    \begin{equation}\label{eq:vector-space-emc-n=(s+1)k}
        m_q((s+1)k,k,s)=\qbinom{(s+1)k-1}{k}_q.
    \end{equation}
    Moreover, a family $\mathcal{F}\subseteq\qbinom{V}{k}$ is extremal if and only if it is isomorphic to $\mathcal{A}_q(k,s)$ unless $s=1$ and $n = 2k$.
        
    \item[{\rm (iii)}] For $n\ge (2s+1)k-s+1$,
    \begin{equation}\label{eq:vector-space-emc-large-n}
        m_q(n,k,s)=\qbinom{n}{k}_q-q^{sk}\qbinom{n-s}{k}_q.
    \end{equation}
    Moreover, a family $\mathcal{F}\subseteq\qbinom{V}{k}$ is extremal if and only if it is isomorphic to $\mathcal{B}_q(n,k,s)$.
\end{itemize}
\end{theorem}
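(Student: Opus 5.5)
The three parts need different tools, as the abstract indicates. I would handle (i) via Lovász's matroid matching minimax, (ii) via a Katona-type averaging over spreads (or a Hoffman-type bound), and (iii) via a Frankl/Bollobás--Daykin--Erdős-style covering argument.

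\textbf{Part (ii).} I would average over $k$-spreads. Identify $V=\mathbb F_q^{(s+1)k}$ with $\mathbb F_{q^k}^{s+1}$ when $n=(s+1)k$. More generally, consider structures $\mathcal S$ consisting of $\qbinom{s+1}{1}_{q^k}$ $k$-subspaces forming the points of a projective space $\mathrm{PG}(s,q^k)$ embedded as a Desarguesian $k$-spread of a fixed $(s+1)k$-subspace. In such a spread, any $s+1$ points in general position over $\mathbb F_{q^k}$ have direct sum. Hence $\mathcal F\cap\mathcal S$ is a point set of $\mathrm{PG}(s,q^k)$ containing no $s+1$ independent points. Such a set spans at most a hyperplane, so it has size at most $\qbinom{s}{1}_{q^k}$. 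Averaging over the group action (which is transitive on $k$-subspaces) gives $|\mathcal F|\le \qbinom{s}{1}_{q^k}\cdot \qbinom{n}{k}_q/\qbinom{s+1}{1}_{q^k}$. The identity $\qbinom{n}{k}_q/\qbinom{s+1}{1}_{q^k}=\qbinom{n-1}{k-1}_q$ holds at $n=(s+1)k$. For $n>(s+1)k$ I would instead average over $(s+1)k$-subspaces $W$ to reach the stated bound; the constants need checking here, and I may have to use a Hoffman-bound formulation instead. At $n=(s+1)k$ one verifies $\qbinom{s}{1}_{q^k}\qbinom{n-1}{k-1}_q=\qbinom{(s+1)k-1}{k}_q$, giving equality.

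For uniqueness I would use the equality case. Every spread meets $\mathcal F$ in exactly a hyperplane of $\mathrm{PG}(s,q^k)$. I would show that the union of the members of $\mathcal F$ is contained in one $((s+1)k-1)$-subspace by comparing two spreads sharing many elements. When $s=1$, $n=2k$, the EKR equality case has additional extremal families, which is why it is excluded.

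\textbf{Part (i).} Each $2$-subspace is a line of the projective space, and $\nu_q(\mathcal F)$ is the matroid matching number of the family of lines in the linear matroid on $V$. Lovász's minimax theorem gives a subspace $U$ and a partition of the lines into classes $\mathcal F_1,\dots,\mathcal F_t$ such that
\[
\nu=\dim U+\sum_i\Big\lfloor \tfrac12 \dim\big((\textstyle\sum_{F\in\mathcal F_i}F+U)/U\big)\Big\rfloor .
\]
Put $d_i$ for the dimension appearing in the $i$-th summand. With $u=\dim U$, $\mathcal F$ consists of lines meeting $U$, of which there are $\qbinom{n}{2}_q-q^{2u}\qbinom{n-u}{2}_q$, together with lines inside the spaces $U+\operatorname{span}\mathcal F_i$ of dimension $u+d_i$, subject to $u+\sum\lfloor d_i/2\rfloor\le s$. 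Counting the lines in these subspaces that avoid $U$ gives a convex-type function of $(u,d_i)$. The key inequality is that merging or concentrating the $d_i$ increases the count, so the maximum is attained at $t\le1$ with either $u=s$ (giving $\mathcal B_q$) or $u=0$, $d_1=2s+1$ (giving $\mathcal A_q$). Intermediate $u$ would be handled by convexity in $u$. This optimization is where I expect most of the computational work. Uniqueness follows from strictness of these inequalities.

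\textbf{Part (iii).} I would follow Frankl's argument for $n\ge(2s+1)k-s$. Take a maximum direct matching $M_1,\dots,M_s$ in $\mathcal F$, where $s=\nu_q$ is assumed. Every member of $\mathcal F$ meets $W=M_1\oplus\cdots\oplus M_s$ nontrivially; otherwise the matching extends. Next, show that if no $s$-subspace $T$ is met by all members, then $\mathcal F$ is much smaller: for each ``bad'' structure, count the $k$-spaces meeting $W$ in a way avoiding a transversal. The cleanest route is a Hilton--Milner-type stability statement: if $\mathcal F$ is not contained in some $\mathcal B_q$, then $|\mathcal F|\le|\mathcal B_q|-$(order $q^{(k-1)(n-k)}$ loss). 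This would be proved by greedily extending matchings using packings of $k$-spaces in complements, i.e.\ the packing-design arguments.

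The main obstacle is reaching the explicit threshold $(2s+1)k-s+1$ rather than some $n_0$. There I would try a vector-space version of Frankl's ``$s+1$ disjoint shadows'' averaging. Concretely, for a random $((s+1)k)$-subspace or a random partial spread of $s+1$ members, apply part (ii)-type bounds to the non-$\mathcal B$ part and compare counts.
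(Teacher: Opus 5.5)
Your part (i) follows the paper's proof: Lov\'asz's minimax, superadditivity of $\qbinom{2t+1}{2}_q$ to merge the classes, and strict convexity in $\dim U$. Your spread averaging also correctly gives $m_q((s+1)k,k,s)=\qbinom{(s+1)k-1}{k}_q$.

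\textbf{Part (ii), the general bound.} The inequality in (ii) is not established for $n>(s+1)k$. Write $[a]_q=\qbinom{a}{1}_q$. Averaging over $(s+1)k$-subspaces, or over spreads of a fixed $(s+1)k$-space, only yields $|\mathcal F|\le \qbinom{n}{k}_q[sk]_q/[(s+1)k]_q$. The target $\qbinom{s}{1}_{q^k}\qbinom{n-1}{k-1}_q$ equals $\qbinom{n}{k}_q[sk]_q/[n]_q$, so you lose a factor of about $q^{n-(s+1)k}$. No bookkeeping repairs this: the local extremal density is a constant, while the target density decays in $n$.

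The averaging object must live in all of $V$. If $k\mid n$, use a Desarguesian spread of $V\cong\mathbb F_{q^k}^{n/k}$. A subfamily of it with $\nu_q\le s$ spans at most an $s$-dimensional $\mathbb F_{q^k}$-subspace, which gives density exactly $[sk]_q/[n]_q$.

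If $k\nmid n$ there is no spread. An ordinary Hoffman bound does not suffice for $s\ge 2$, since you need the independence number of the $(s+1)$-uniform hypergraph of direct $(s+1)$-tuples. The paper uses the Filmus--Golubev--Lifshitz link bound for this. The skeleton of the link of a direct $i$-tuple is a $q^{ik^2}$-fold blow-up of $\mathrm{KN}_q(n-ik,k)$, whose least normalized eigenvalue is $\lambda_i=-q^{-k}[k]_q/[n-(i+1)k]_q$. The product $\prod_{i=0}^{s-1}(1-\lambda_i)=q^{-ks}[n]_q/[n-sk]_q$ then gives exactly the stated bound.

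\textbf{Part (ii), uniqueness.} At $n=(s+1)k$, ``comparing two spreads'' is not yet an argument. The paper passes to $\mathcal F^\perp$, which is an $(s+1)$-wise intersecting family of $sk$-subspaces of size $\qbinom{n-1}{sk-1}_q$. It then applies the Chowdhury--Patk\'os characterization to get $\mathcal F=\qbinom{L^\perp}{k}$ for a $1$-subspace $L$.

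\textbf{Part (iii).} This part has no working mechanism. Frankl's argument for $n\ge(2s+1)k-s$ rests on shifting, which has no vector-space analogue. Packing designs give no route to an exact result, and random $(s+1)k$-subspaces lose the same factor as above. The missing idea is induction on $s$ through quotients by a $1$-subspace $I$. Let $\mathcal F[I]$ denote the members containing $I$, and let $\pi_I:V\to V/I$ be the quotient map.

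\emph{Case 1.} Suppose $\nu_q(\pi_I(\mathcal F\setminus\mathcal F[I]))\le s-1$ for some $I$. Each $k$-subspace of $V/I$ has exactly $q^k$ lifts meeting $I$ trivially. So induction gives $|\mathcal F|\le\qbinom{n-1}{k-1}_q+q^kB_q(n-1,k,s-1)=B_q(n,k,s)$. Equality forces the full star at $I$, full fibres and an extremal quotient, hence $\mathcal F\cong\mathcal B_q(n,k,s)$.

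\emph{Case 2.} Otherwise every such quotient family contains an $s$-matching. Choose $I$ inside the span $U$ of a maximal direct subfamily with $|\mathcal F[I]|\ge|\mathcal F|/[sk]_q$; this is possible because every member meets $U$ and $\dim U\le sk$. An $s$-matching of the quotient lifts to $F_1\oplus\cdots\oplus F_s$ meeting $I$ trivially. No member of $\mathcal F[I]$ can meet this sum trivially, so $|\mathcal F[I]|\le B_q(n-1,k-1,sk)$. Hence $|\mathcal F|\le[sk]_q\,B_q(n-1,k-1,sk)$, which is less than $B_q(n,k,s)$ for $n\ge(2s+1)k-s+1$ by a direct estimate. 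This comparison is exactly where the threshold you flagged as the main obstacle comes from.

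The base case $s=1$ is the vector-space Erd\H{o}s--Ko--Rado theorem.
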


We have several remarks on \cref{thm:vemc}.
First, \eqref{eq:vector-space-emc-k=2} may be viewed as a vector-space analogue of the Erd\H{o}s--Gallai theorem \cite{erdHos1959maximal}, which settles the case $k=2$ of Conjecture~\ref{conj:emc}. The proof of \eqref{eq:vector-space-emc-k=2} relies on a result of Lov\'asz \cite{lovasz1980selecting} on selecting independent lines from a family of lines in a projective space (see Lemma~\ref{lem: min-max}). In the present setting, these projective lines correspond precisely to the $2$-dimensional subspaces of the ambient vector space. Thus, Lov\'asz's result is a projective-geometric analogue of the Tutte--Berge formula \cite{berge1958couplage} for graph matchings: it provides a structural description of families of lines with bounded $\nu_q(\cdot)$ and reduces the corresponding extremal problem to a counting argument.

Second, in the set-system setting, Frankl \cite{frankl1987shifting} proved that $m(n,k,s)\le s\binom{n-1}{k-1}$. As a vector-space analogue of this bound, our proof of \eqref{eq:vector-space-emc-general} combines a hypergraph Hoffman bound \cite{filmus2021high} with the eigenvalues of the $q$-Kneser graph \cite{delsarte1976association}.
When $k\mid n$, we give an alternative proof of \eqref{eq:vector-space-emc-general}, combining a random sampling argument in the spirit of Katona's cycle method \cite{katona1972simple} with a field-extension trick. To the best of our knowledge, the field-extension trick first appeared in the work of Greene and Kleitman \cite{greene1978proof}, where it was used to prove a special case of the vector-space Erd\H{o}s--Ko--Rado theorem; see also \cite{chowdhury2010shadows} for a more recent application.
For the special case $n=(s+1)k$, \eqref{eq:vector-space-emc-n=(s+1)k} follows immediately from the upper bound in \eqref{eq:vector-space-emc-general} together with the construction $\mathcal{A}_q(k,s)$ in \eqref{eq:f1}. We emphasize that \eqref{eq:vector-space-emc-n=(s+1)k} is also implicit in a result of Chowdhury and Patk\'os \cite{chowdhury2010shadows} on $(s+1)$-wise intersecting families of vector spaces.

Finally, the proof of \eqref{eq:vector-space-emc-large-n} for large $n$ proceeds by induction, following the same general strategy as in \cite{bollob1976sets}. The same inductive argument also yields a Hilton--Milner-type stability result, which we discuss in the next subsection.

\subsection{A Hilton–Milner-type stability theorem}

Bollob\'as, Daykin, and Erd\H{o}s~\cite{bollob1976sets} proved a stability result for the Erd\H{o}s matching problem. To state their result, we first define the Hilton--Milner hypergraph $\mathcal{H}(n,k,s)$. Fix a vertex $1\in[n]$ and a set $E\in\binom{[n]}{k}$ with $1\notin E$, and define
$\mathcal{H}(n,k,1)=\{F\in\binom{[n]}{k}:1\in F,~F\cap E\neq\varnothing\}\cup\{E\}$. 
Starting from $\mathcal{H}(n-1,k,s-1)\subseteq\binom{[n-1]}{k}$, define recursively
$\mathcal{H}(n,k,s)=\{F\in\binom{[n]}{k}:n\in F\}\cup \mathcal{H}(n-1,k,s-1)$. 
It is easy to verify by induction on $s$ that $\nu(\mathcal{H}(n,k,s))\le s$.

Extending the Hilton--Milner theorem \cite{hilton1967some}, Bollob\'as, Daykin, and Erd\H{o}s~\cite{bollob1976sets} showed that for sufficiently large $n$, the hypergraph $\mathcal{H}(n,k,s)$ has the maximum number of edges among all hypergraphs $\mathcal{F}\subseteq\binom{[n]}{k}$ such that $\nu(\mathcal{F})\le s$ and $\mathcal{F}$ is not contained in any family isomorphic to $\mathcal{B}(n,k,s)$. 
This result has attracted considerable attention; see \cite{huang2017degree,frankl2020non,guo2026stability} for some recent developments.

Inspired by \cite{bollob1976sets}, we establish a stability result corresponding to \cref{thm:vemc} (iii). We now define the Hilton--Milner family $\mathcal{H}_q(n,k,s)\subseteq\qbinom{V}{k}$ in the vector-space setting. Fix a $1$-subspace $I\in\qbinom{V}{1}$ and a $k$-subspace $E\in\qbinom{V}{k}$ such that $I\cap E=\{0\}$, and define
\begin{equation*}
\mathcal{H}_q(n,k,1)=\left\{F\in \qbinom{V}{k}: I\le F,~F\cap E\neq \{0\}\right\}\cup \qbinom{I+E}{k}.
\end{equation*}

For $s\ge 2$, suppose that $\mathcal{H}_q(n-1,k,s-1)\subseteq\qbinom{V_{n-1}}{k}$ has been defined, where $\dim V_{n-1}=n-1$. Choose a $1$-dimensional vector space $I_n$ over $\mathbb F_q$, and let $V_n$ be the direct sum of $V_{n-1}$ and $I_n$. Let $\pi:V_n\to V_{n-1}$ be the canonical projection with kernel $I_n$. We then define
\begin{equation}\label{eq:H_q(n,k,s)}
  \mathcal{H}_q(n,k,s)=\left\{F\in\qbinom{V_n}{k}: I_n\le F\right\}\cup\left\{F\in\qbinom{V_n}{k}: \pi(F)\in\mathcal{H}_q(n-1,k,s-1)\right\}.  
\end{equation}
Note that the second family automatically consists of $k$-subspaces $F$ with $F\cap I_n=\{0\}$, since otherwise $\dim \pi(F)=k-1$ and hence $\pi(F)\notin\qbinom{V_{n-1}}{k}$. In Proposition \ref{prop:H_q(n,k,s)} below, we compute $|\mathcal{H}_q(n,k,s)|$ and show that $\nu_q(\mathcal{H}_q(n,k,s))\le s$. 

For $k=3$, there is one additional extremal family, defined as follows. Fix a $3$-subspace $E\in\qbinom{V}{3}$, and define
\begin{equation*}
\mathcal{H}_q'(n,3,1)=\left\{F\in\qbinom{V}{3}:\dim(F\cap E)\ge 2\right\}.
\end{equation*}
For $s\ge 2$, using the notation above, define recursively
\begin{equation}\label{eq:H_q'(n,3,s)}
    \mathcal{H}_q'(n,3,s)=\left\{F\in\qbinom{V_n}{3}: I_n\le F\right\}\cup\left\{F\in\qbinom{V_n}{3}: \pi(F)\in\mathcal{H}_q'(n-1,3,s-1)\right\}.
\end{equation}

In \cite{blokhuis2010hilton}, the authors characterized all largest families $\mathcal{F}\subseteq\qbinom{V}{k}$ satisfying $\nu_q(\mathcal{F})\le 1$ and not contained in any family isomorphic to $\mathcal{B}_q(n,k,1)$; see \cref{thm:HM} below. We extend their result to $\nu_q(\mathcal{F})\le s$ for every $s\ge 2$, and show that, for sufficiently large $n$, the families $\mathcal{H}_q(n,k,s)$ and $\mathcal{H}_q'(n,3,s)$ are extremal among all families $\mathcal{F}\subseteq\qbinom{V}{k}$ satisfying $\nu_q(\mathcal{F})\le s$ and not contained in any family isomorphic to $\mathcal{B}_q(n,k,s)$; see Proposition \ref{prop:H_q(n,k,s)} for properties of $\mathcal{H}_q(n,k,s)$ and $\mathcal{H}_q'(n,3,s)$.

\begin{theorem}\label{thm:q-HM}
Let $n$, $k$, and $s$ be positive integers with $n\ge (2s+1)k-s+3$ and $k\ge 2$, let $q$ be a prime power, and let $V$ be an $n$-dimensional vector space over $\mathbb{F}_q$. If $\mathcal{F}\subseteq\qbinom{V}{k}$ satisfies $\nu_q(\mathcal{F})\le s$ and is not contained in any family isomorphic to $\mathcal{B}_q(n,k,s)$, then $|\mathcal{F}|\le|\mathcal{H}_q(n,k,s)|$.

Moreover, if $k\neq3$, then equality holds if and only if $\mathcal{F}$ is isomorphic to $\mathcal{H}_q(n,k,s)$; if $k=3$, then equality holds if and only if $\mathcal{F}$ is isomorphic to either $\mathcal{H}_q(n,3,s)$ or $\mathcal{H}_q'(n,3,s)$.
\end{theorem}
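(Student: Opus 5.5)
We argue by induction on $s$, with the base case $s=1$ being the vector-space Hilton--Milner theorem of Blokhuis et al.\ (\cref{thm:HM}). That theorem identifies the extremal non-trivial intersecting families as $\mathcal H_q(n,k,1)$ and, when $k=3$, also $\mathcal H_q'(n,3,1)$. For $s\ge2$, let $\mathcal F$ satisfy $\nu_q(\mathcal F)\le s$, let $\mathcal F$ not be contained in any copy of $\mathcal B_q(n,k,s)$, and let $|\mathcal F|\ge|\mathcal H_q(n,k,s)|$. We follow the Bollob\'as--Daykin--Erd\H{o}s strategy. First we find a $1$-subspace $P$ of large degree $d(P)=|\{F\in\mathcal F:P\le F\}|$. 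Then we pass to the quotient $V/P$ (equivalently, project from $P$ onto a complementary hyperplane $W$), and apply induction to the members of $\mathcal F$ avoiding $P$.

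\textbf{Step 1 (a high-degree point).} Take a maximal direct family $F_1,\dots,F_t\in\mathcal F$ with $t\le s$. Every member of $\mathcal F$ meets $U=F_1+\cdots+F_t$ nontrivially, where $\dim U\le sk$. Hence some $1$-subspace $P\le U$ lies in at least $|\mathcal F|/\qbinom{sk}{1}_q$ members. Since $|\mathcal H_q(n,k,s)|$ is roughly $s\qbinom{n-1}{k-1}_q$, this degree is of order $\qbinom{n-1}{k-1}_q/q^{sk}$, which is not yet close to full. To boost it, we use a standard augmentation argument. Suppose every point had degree at most $(1-\varepsilon)\qbinom{n-1}{k-1}_q$. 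Then we can greedily build a direct $(s+1)$-matching, using that the number of $k$-subspaces through $P$ meeting a fixed $(sk)$-space outside $P$ is at most $\qbinom{sk}{1}_q\qbinom{n-2}{k-2}_q$, which is small for $n\ge(2s+1)k-s+3$. This is essentially the argument already used for \cref{thm:vemc}~(iii), and we expect to reuse that lemma. The conclusion is that some $P$ has $d(P)\ge \qbinom{n-1}{k-1}_q-c\,\qbinom{sk}{1}_q\qbinom{n-2}{k-2}_q$.

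\textbf{Step 2 (reduction).} Let $\mathcal F_0=\{F\in\mathcal F:P\not\le F\}$. Because $P$ has nearly full degree, we claim $\nu_q(\mathcal F_0)\le s-1$. Indeed, if $F_1,\dots,F_s\in\mathcal F_0$ were direct, then $\dim(P+\sum F_i)\le sk+1$. The number of $k$-spaces through $P$ meeting this span only in $P$ is $q^{(sk+1-1)(k-1)}\qbinom{n-sk-1}{k-1}_q$, which exceeds the number of $k$-spaces through $P$ missing from $\mathcal F$. So some member through $P$ completes an $(s+1)$-matching, a contradiction. Next we project: $\pi_P(\mathcal F_0)\subseteq\qbinom{W}{k}$ with $\dim W=n-1$. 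The projection preserves directness of sums of members avoiding $P$ whenever the sum avoids $P$. We therefore need a version of $\nu_q\le s-1$ for the projected multiset. We handle this by noting that each $k$-space of $W$ has at most $q^k$ preimages, namely the graphs of maps to $P$, and by running the argument directly on the family $\mathcal G=\{G\in\qbinom{W}{k}:\pi_P^{-1}(G)\text{ contains some }F\in\mathcal F_0\}$. This gives $|\mathcal F|\le d(P)+q^k|\mathcal G|$, and the recursive definition \eqref{eq:H_q(n,k,s)} shows that $|\mathcal H_q(n,k,s)|=\qbinom{n-1}{k-1}_q+q^k|\mathcal H_q(n-1,k,s-1)|$.

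\textbf{Step 3 (applying induction and equality).} If $\mathcal G$ is contained in a copy of $\mathcal B_q(n-1,k,s-1)$ with centre $S$, then every member of $\mathcal F$ meets $P+S$, so $\mathcal F\subseteq\mathcal B_q(n,k,s)$, contrary to hypothesis. Hence $\mathcal G$ is non-trivial. Induction applies because $n-1\ge(2(s-1)+1)k-(s-1)+3$, and it gives $|\mathcal G|\le|\mathcal H_q(n-1,k,s-1)|$. Combined with $d(P)\le\qbinom{n-1}{k-1}_q$, this yields the bound. For equality we need $d(P)=\qbinom{n-1}{k-1}_q$, all $q^k$ lifts of each $G\in\mathcal G$ lying in $\mathcal F$, and $\mathcal G$ extremal. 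By induction $\mathcal G$ is then $\mathcal H_q(n-1,k,s-1)$, or $\mathcal H_q'(n-1,3,s-1)$ when $k=3$, which reproduces \eqref{eq:H_q(n,k,s)} or \eqref{eq:H_q'(n,3,s)} with $I_n=P$.

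\textbf{Main obstacle.} The crux is the subtlety in Step~2. Projection can identify distinct members and can destroy directness, so "$\nu_q(\mathcal G)\le s-1$" must be justified. Given $G_1,\dots,G_s\in\mathcal G$ direct in $W$, we choose lifts $F_i$ in $\mathcal F_0$. Their sum has dimension $sk$ or is contained in $P+\sum G_i$, and in either case adding a member through $P$ avoiding $\sum G_i$ yields a direct $(s+1)$-family. This step uses the near-full degree of $P$, which is where the bound $n\ge(2s+1)k-s+3$ enters. Making the degree estimate in Step~1 quantitatively strong enough is the real work: it must exceed the threshold count above while the deficit $\qbinom{n-1}{k-1}_q-d(P)$ is compared against the gap $|\mathcal H_q|-q^k|\mathcal G|$. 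We expect this to require a careful case split according to whether $d(P)$ is full.
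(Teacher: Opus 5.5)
Your overall architecture is the paper's: the base case from \cref{thm:HM}, a point $P$ with $d(P)\ge|\mathcal{F}|/\qbinom{sk}{1}_q$ taken from a maximal direct subfamily, the estimate $|\mathcal{F}|\le d(P)+q^k|\mathcal G|$, lifting an $(s-1)$-centre $S$ to $P+S$, and the equality analysis via the recursion. The paper packages this as a dichotomy: either some $I$ has $\nu_q(\pi_I(\mathcal{F}\setminus\mathcal{F}[I]))\le s-1$ and induction applies, or the averaging point gives a contradiction.

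The gap is Step~1. You assert that $d(P)$ must be boosted to nearly $\qbinom{n-1}{k-1}_q$ by a greedy augmentation, but the argument as sketched does not work. A uniform \emph{upper} bound on degrees gives you nothing with which to extend a direct $s$-family. The counting fact you quote is useful only for a point of \emph{large} degree outside the current span. So the hypothesis on which your Step~2 rests is unproved. You also place the ``real work'' (comparing the deficit $\qbinom{n-1}{k-1}_q-d(P)$ with $|\mathcal H_q(n,k,s)|-q^k|\mathcal G|$) where none is needed: once $\nu_q(\mathcal G)\le s-1$, the bound $|\mathcal{F}|\le\qbinom{n-1}{k-1}_q+q^kH_q(n-1,k,s-1)=H_q(n,k,s)$ holds unconditionally.

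The missing observation is that no boost is necessary. Your Step~2 argument only needs $d(P)>B_q(n-1,k-1,sk)=\qbinom{n-1}{k-1}_q-q^{sk(k-1)}\qbinom{n-sk-1}{k-1}_q\le[sk]_q\qbinom{n-2}{k-2}_q$, which is a vanishing fraction of $\qbinom{n-1}{k-1}_q$. The averaging point already beats this. Indeed $d(P)\ge H_q(n,k,s)/[sk]_q$, with $H_q(n,k,s)\ge B_q(n,k,s-1)\ge(1-4q^{-(n-s-k+3)})[s-1]_q\qbinom{n-1}{k-1}_q$ and $\qbinom{n-1}{k-1}_q/\qbinom{n-2}{k-2}_q=[n-1]_q/[k-1]_q>q^{n-k}$. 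Hence it suffices that $\tfrac12q^{n-k}\ge q^{2sk-s+2}$, i.e.\ $n\ge 2sk-s+k+3$. This is Proposition~\ref{prop:H-parameter}, and it is exactly where the hypothesis on $n$ is used.

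Separately, your intermediate claim $\nu_q(\mathcal F_0)\le s-1$ is false: members avoiding $P$ can have a direct sum containing $P$. For example, in $\mathcal H_q(n,k,2)$ with $P=I_n$, lifts of two members of $\mathcal H_q(n-1,k,1)$ meeting in a $1$-subspace can intersect trivially, and their $2k$-dimensional sum then contains $I_n$. Only the statement for $\mathcal G$ holds. There the ``or'' in your last paragraph is superfluous: lifts of a direct $s$-tuple in $W$ always have $\dim(F_1+\cdots+F_s)=sk$ and $(F_1+\cdots+F_s)\cap P=\{0\}$.
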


Theorem~\ref{thm:q-HM} implies Theorem~\ref{thm:vemc}~{\rm (iii)}
when $n\ge (2s+1)k-s+3$. Indeed, let
$\mathcal F\subseteq\qbinom{V}{k}$ satisfy $\nu_q(\mathcal F)\le s$.
If $\mathcal F$ is contained in a family isomorphic to
$\mathcal B_q(n,k,s)$, then $|\mathcal F|\le |\mathcal{B}_q(n,k,s)|$. Otherwise,
Theorem~\ref{thm:q-HM} gives $|\mathcal F|\le |\mathcal{H}_q(n,k,s)|$. Since
$|\mathcal{H}_q(n,k,s)|<|\mathcal{B}_q(n,k,s)|$ for $n\ge (2s+1)k-s+3$, it follows that
$|\mathcal F|<|\mathcal{B}_q(n,k,s)|$ in this case. Hence
$m_q(n,k,s)=|\mathcal{B}_q(n,k,s)|$, and equality holds only for families isomorphic to
$\mathcal B_q(n,k,s)$.

\subsection{Cover-free families in vector spaces}

Cover-free families were introduced by Kautz and Singleton \cite{kautz2003nonrandom} and independently by Erd\H{o}s, Frankl, and F\"uredi \cite{erdos1982families,erdHos1985families}. They have numerous applications in design theory, coding theory, group testing, and cryptography. We refer the reader to the survey \cite{idalino2022survey} for a comprehensive introduction.

Formally, a family $\mathcal{F}\subseteq\binom{[n]}{r}$ is {\it $t$-cover-free} if no member is contained in the union of any other $t$ members; equivalently, there do not exist distinct sets $U,U_1,\ldots,U_t\in\mathcal{F}$ such that $U\subseteq U_1\cup\cdots\cup U_t$. Let $C_t(n,r)$ denote the maximum size of a $t$-cover-free family $\mathcal{F}\subseteq\binom{[n]}{r}$. Frankl and F\"uredi \cite{frankl1987colored} proved the following asymptotically sharp formula for $C_t(n,r)$ for all $t\ge 2$ and $r\ge 3$:
\begin{equation*}
    C_t(n,r)=(1+o(1))\cdot\frac{\binom{n}{k}}{\binom{r}{k}-m(r,k,s)},
\end{equation*}
where $o(1)\to 0$ as $n\to\infty$, and $0\le s\le t-1$ is the unique integer satisfying $r=k(s+1)+(k-1)(t-s-1)$. We set $m(r,k,0)=0$.

Recently, Shan and Zhou \cite{shan2026cover} introduced a vector-space analogue of $2$-cover-free families. For $n\ge r+1$, a family $\mathcal{F}\subseteq \qbinom{V}{r}$ is called {\it cover-free} if there do not exist three distinct subspaces $U,U_1,U_2\in\mathcal{F}$ such that $U\le (U_1\cap U)+(U_2\cap U)$. They proved that for positive integers $n$, $r$, and $k$ with $n\ge r+1$,
\begin{equation}\label{eq:vector-space-cff-t=2}
    |\mathcal{F}|\le
    \begin{cases}
        \dfrac{\qbinom{n-1}{k}_q}{\qbinom{r-1}{k}_q}, & \mbox{if } r=2k,\\[4pt]
        \dfrac{\qbinom{n}{k}_q}{\qbinom{r}{k}_q}, & \mbox{if } r=2k-1.
    \end{cases}
\end{equation}

They also characterized the extremal families, showing that these structures are closely related to $q$-Steiner systems. Here a family of $r$-subspaces $\mathcal{H}\subseteq\qbinom{V}{r}$ is called a {\it $q$-Steiner system} $S_q(n,r,k)$ on $V$ if every $k$-subspace $T\in\qbinom{V}{k}$ is contained in exactly one member $H\in\mathcal{H}$.

We consider the following generalization. For integers $n\ge r+1$ and $t\ge 2$, a family $\mathcal{F}\subseteq\qbinom{V}{r}$ is called {\it $t$-cover-free} if there do not exist $t+1$ distinct members $U,U_1,\ldots,U_t\in\mathcal{F}$ such that $U\le (U_1\cap U)+\cdots+(U_t\cap U)$.

Let $C_{q,t}(n,r)$ denote the maximum size of a $t$-cover-free family in $\qbinom{V}{r}$. We determine the asymptotic behavior of $C_{q,t}(n,r)$ in terms of $m_q(r,k,s)$ for every $t\ge 2$. We set $m_q(r,k,0)=0$. When $t=2$, our result is weaker than \eqref{eq:vector-space-cff-t=2} only up to a lower-order term.

\begin{theorem}\label{thm:vscff}
Let $k=\lceil r/t\rceil$, and let $0\le s\le t-1$ be the unique integer satisfying $r=(s+1)k+(t-s-1)(k-1)$. Then, for all fixed integers $r\ge 1$ and $t\ge 2$, and every prime power $q$, we have
\begin{equation*}
C_{q,t}(n,r)=(1+o(1))\cdot \frac{\qbinom{n}{k}_q}{\qbinom{r}{k}_q-m_q(r,k,s)},
\end{equation*}
where $o(1)\to 0$ as $n\to\infty$.

In particular, when $s=0$, equivalently $r=k+(t-1)(k-1)$, and $n$ is sufficiently large, we have
$C_{q,t}(n,r)\le \frac{\qbinom{n}{k}_q}{\qbinom{r}{k}_q}$, 
and equality holds if and only if there exists a $q$-Steiner system $S_q(n,r,k)$.
\end{theorem}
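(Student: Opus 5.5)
The plan is to follow the Frankl--F\"uredi strategy for set systems and transfer each ingredient to subspaces, with $m_q(r,k,s)$ playing the role of $m(r,k,s)$.

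\textbf{Upper bound.} Let $\mathcal F\subseteq\qbinom{V}{r}$ be $t$-cover-free. For each $U\in\mathcal F$, call a $k$-subspace $T\le U$ \emph{own} if no other member of $\mathcal F$ contains $T$. The key claim is that the non-own $k$-subspaces of $U$ satisfy $\nu_q\le s$, apart from a set of members whose number is of lower order. Suppose some $T_1,\dots,T_{s+1}\le U$ have direct sum, and each is covered by a different member $U_i\ne U$. Then $W=T_1\oplus\cdots\oplus T_{s+1}$ has dimension $(s+1)k$, and $r-(s+1)k=(t-s-1)(k-1)$. We extend by choosing a complement of $W$ in $U$ and splitting it into $t-s-1$ pieces of dimension $k-1$, say $S_1,\dots,S_{t-s-1}$. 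If each $S_j$ lies in some other member $U_{s+1+j}$, then $U\le\sum_i (U_i\cap U)$ with $t$ members, which is a contradiction.

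The difficulty is that the $(k-1)$-pieces need not be covered, and the $U_i$ need not be distinct. I will handle this as Frankl--F\"uredi do, with a preliminary cleaning step. First discard the members $U$ that contain a $(k-1)$-subspace lying in no other member. The number of discarded members is at most $\qbinom{n}{k-1}_q=o\bigl(\qbinom{n}{k}_q\bigr)$, since each such member can be charged to a private $(k-1)$-subspace. For the remaining members every $(k-1)$-subspace is covered by another member. If some $U_i$ coincide, then $T_i+T_j\le U_i\cap U$, which only makes covering easier; we then use fewer members and pad with arbitrary additional distinct members, noting that $\mathcal F$ is large. Hence in the cleaned family each $U$ has at least $\qbinom{r}{k}_q-m_q(r,k,s)$ own $k$-subspaces. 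Own $k$-subspaces are distinct across members, so double counting gives $|\mathcal F|\bigl(\qbinom{r}{k}_q-m_q(r,k,s)\bigr)\le\qbinom{n}{k}_q+o\bigl(\qbinom{n}{k}_q\bigr)$. I expect the cleaning step, and in particular the padding and distinctness bookkeeping, to be the main technical obstacle.

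\textbf{Lower bound.} Fix an extremal family $\mathcal G\subseteq\qbinom{\mathbb F_q^r}{k}$ with $\nu_q(\mathcal G)\le s$ and $|\mathcal G|=m_q(r,k,s)$. Each $U\in\qbinom{V}{r}$ carries a copy $\mathcal G_U$ via an isomorphism, and its complement $\mathcal K_U=\qbinom{U}{k}\setminus\mathcal G_U$ has size $\qbinom{r}{k}_q-m_q(r,k,s)$. We want a large collection of $r$-subspaces such that:
\begin{itemize}
\item[(a)] the sets $\mathcal K_U$ are pairwise disjoint;
\item[(b)] every $k$-subspace of $\mathcal G_U$ lies in some other member;
\item[(c)] two members meet in dimension at most $k$, or at least the intersection structure is controlled.
\end{itemize}
We will obtain the near-packing from the R\"odl nibble / Frankl--R\"odl theorem applied to the hypergraph on vertex set $\qbinom{V}{k}$ whose edges are the sets $\mathcal K_U$. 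This hypergraph is regular with small codegrees, so it yields a packing of size $(1-o(1))\qbinom{n}{k}_q/|\mathcal K_U|$. Alternatively, one could use the Keevash / Glock--K\"uhn--Lo--Osthus framework for designs in vector spaces. The $t$-cover-free property then needs checking. If $U\le\sum_{i=1}^t(U_i\cap U)$ and every $U_i\cap U$ has dimension at most $k-1$, then $r\le t(k-1)<r$, which is impossible; this uses the definition of $k$. Otherwise, each $U_i\cap U$ of dimension at least $k$ contains only $k$-subspaces from $\mathcal G_U$. Once we ensure intersections have dimension at most $k$ (property (c)), that $U_i\cap U$ is a single $k$-space in $\mathcal G_U$. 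Counting dimensions then forces at least $s+1$ of the intersections $U_i\cap U$ to be $k$-dimensional and to form a direct sum, contradicting $\nu_q(\mathcal G_U)\le s$.

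Property (c) requires forbidding large intersections. These occur with negligible frequency in a random greedy packing and can be deleted at $o(\cdot)$ cost. Property (b) is not actually needed for cover-freeness.

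\textbf{Case $s=0$.} Here $m_q(r,k,0)=0$, and the cleaning step is unnecessary. Every $k$-subspace of $U$ in the cleaned family is own, so $|\mathcal F|\qbinom{r}{k}_q\le\qbinom{n}{k}_q$ holds exactly for large $n$, once one checks that no member is discarded in the exact argument. Equality forces every $k$-subspace to be covered exactly once, which is precisely an $S_q(n,r,k)$. Conversely, a $q$-Steiner system is $t$-cover-free by the dimension count $r>t(k-1)$ above, since in a Steiner system two distinct members meet in dimension less than $k$.
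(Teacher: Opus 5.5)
Your upper bound follows the paper's argument: own $k$-subspaces, the claim that the non-own $k$-subspaces of a member with no own $(k-1)$-subspace have $\nu_q\le s$, and padding to $t$ distinct members. Discarding the members that have an own $(k-1)$-subspace, at a cost of $\qbinom{n}{k-1}_q$, is enough for the asymptotic statement.

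\textbf{The lower bound has a genuine gap.} You assert that if $\dim(U\cap U_i)=k$ then $A:=U\cap U_i\in\mathcal G_U$. Property (a) does not give this. It only says $A$ is not in both $\mathcal K_U$ and $\mathcal K_{U_i}$. Nothing prevents $A\in\mathcal K_U\cap\mathcal G_{U_i}$, and then $A$ is a covered $k$-subspace of $U$ outside $\mathcal G_U$, so there is no contradiction with $\nu_q(\mathcal G_U)\le s$.

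What you need is the induced condition of Lemma~\ref{lem:induced-packing}~(ii): together with (c), it says a $k$-subspace in $\mathcal K_U$ lies in no other chosen $r$-space. This is not a rare defect you can delete after running a nibble on all the sets $\mathcal K_U$. Take $m\ge(1-\gamma)\qbinom{n}{k}_q/e$ members, where $e=\qbinom{r}{k}_q-m_q(r,k,s)$. Then the $\mathcal K$-parts cover all but $\gamma\qbinom{n}{k}_q$ of the $k$-subspaces, while the $\mathcal G$-parts carry about $\qbinom{n}{k}_q\,m_q(r,k,s)/e$ incidences. The induced condition forces all of these incidences onto that small leftover set, each leftover $k$-space being reused many times. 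A packing from a black-box Frankl--R\"odl theorem has no reason to have this structure. Heuristically, a typical $k$-space of $\mathcal K_U$ would lie in $\Theta(1)$ other members, and most members would then be $t$-covered. So the structure must be built into the construction.

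The paper does this as follows. It starts from an $[n,r,k+1]_q$-packing design $\mathcal P$, which also gives your (c) for free; a bare nibble does not. It keeps each $k$-subspace independently with probability $1-p$, $p=q^{-hn}$, to get $\mathcal R$. It then takes as hyperedges only the traces $\qbinom{Y}{k}\cap\mathcal R$, $Y\in\mathcal P$, that are exactly a copy of $\mathcal K$. Now suppose $A=Y_i\cap Y_j$ is $k$-dimensional and lies in the $i$-th edge. Then $A\in\mathcal R$, so $A$ lies in the $j$-th edge too, contradicting disjointness. The Frankl--R\"odl theorem is applied to this random hypergraph after checking degree concentration and codegree at most $1$.

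\textbf{The exact $s=0$ statement is a second, smaller gap.} Here you need the exact bound $|\mathcal F|\le\qbinom{n}{k}_q/\qbinom{r}{k}_q$ and its equality case. Cleaning is not unnecessary when $s=0$: to show every $k$-subspace of $U$ is own, you still need the $t-1$ complementary $(k-1)$-pieces to be covered. Nor can you ``check'' that nothing is discarded, since a $t$-cover-free family may contain members with own $(k-1)$-subspaces.

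The paper instead charges each such member the $k$-subspaces through its own $(k-1)$-subspaces. None of these is an own $k$-subspace of a member without own $(k-1)$-subspaces. If $\mathcal F_1$ is the set of such members, double counting gives at least $\qbinom{n-k+1}{1}_q|\mathcal F_1|/\qbinom{k}{1}_q$ such $k$-subspaces in total. For large $n$ this exceeds $(\qbinom{r}{k}_q-m_q(r,k,s))|\mathcal F_1|$. This yields the exact upper bound for every $s$. When $s=0$ the inequality is strict, so equality forces $\mathcal F_1=\varnothing$. Then every $k$-subspace is own for exactly one member, i.e.\ $\mathcal F$ is an $S_q(n,r,k)$. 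Your converse direction, that a Steiner system is $t$-cover-free, is fine.
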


The proof of \cref{thm:vscff} is based on converting the cover-free condition into a local vector-space matching problem. For the upper bound, we count own $k$-subspaces, which are $k$-subspaces contained in precisely one member of $\mathcal{F}$.  
For the lower bound, fix an $r$-subspace $W$ and choose a family $\mathcal N\subseteq\qbinom{W}{k}$ with $|\mathcal N|=m_q(r,k,s)$ and $\nu_q(\mathcal N)\le s$. Let $\mathcal H:=\qbinom{W}{k}\setminus\mathcal N$. We use an induced packing lemma (see Lemma \ref{lem:induced-packing} below) to place asymptotically $\qbinom{n}{k}_q/|\mathcal H|$ pairwise disjoint copies of $\mathcal H$ inside $\qbinom{V}{k}$, supported on $r$-subspaces $W_1,W_2,\ldots$. The induced property ensures that, if some $W_j$ were covered by $t$ other members in the sense of the $t$-cover-free condition, then inside $W_j$ one would find $s+1$ members of a copy of $\mathcal N$ whose sum is direct, contradicting $\nu_q(\mathcal N)\le s$. Thus the supporting $r$-subspaces form a $t$-cover-free family of the required asymptotic size.

\paragraph{Organization.} The rest of this paper is organized as follows. In \cref{sec:pre}, we present some preliminaries. In \cref{sec:k=2,sec:Universal bound}, we prove \cref{thm:vemc} (i) and (ii), respectively. In \cref{sec:Vector space EMC for large $n$}, we prove \cref{thm:q-HM}, which essentially implies \cref{thm:vemc} (iii). In \cref{sec:$t$-cover families}, we present upper and lower bounds on $C_{q,t}(n,r)$ and prove \cref{thm:vscff}. 

\section{Preliminaries}\label{sec:pre}

For convenience, we denote $B_q(n,k,s):=|\mathcal{B}_q(n,k,s)|$ and $H_q(n,k,s):=|\mathcal{H}_q(n,k,s)|$. For subspaces $U$ and $W$ of a vector space, we write $U\le W$ to indicate that $U$ is a subspace of $W$. For subspaces $U_1,\ldots,U_m\le V$, we sometimes write $U_1\oplus\cdots\oplus U_m$ to emphasize that the sum is direct. For integers $0\le k\le n$, the Gaussian binomial coefficient $\qbinom{n}{k}_q$ is defined by
\begin{equation*}
\qbinom{n}{k}_q
=
\frac{(q^n-1)(q^{n-1}-1)\cdots(q^{n-k+1}-1)}
{(q^k-1)(q^{k-1}-1)\cdots(q-1)}.
\end{equation*}
It is symmetric, in the sense that $\qbinom{n}{k}_q=\qbinom{n}{n-k}_q$, and it satisfies the $q$-Pascal identities: for $1\le k\le n-1$,
\begin{equation*}
\qbinom{n}{k}_q
=
q^k\qbinom{n-1}{k}_q
+
\qbinom{n-1}{k-1}_q
=
\qbinom{n-1}{k}_q
+
q^{n-k}\qbinom{n-1}{k-1}_q.
\end{equation*}

We need the following two counting lemmas.

\begin{lemma}[{\cite[Lemma~9.3.2~(iii)]{brouwer1989distance}}]\label{lem:counting}
Let $X\le V$ with $\dim X=j$, and let $Z\le X$ with $\dim Z=d$. Then the number of subspaces
$Y\in \qbinom{V}{k}$ such that $X\cap Y=Z$ is
\begin{equation*}
q^{(k-d)(j-d)}\qbinom{n-j}{k-d}_q.
\end{equation*}
\end{lemma}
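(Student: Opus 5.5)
We need to prove the counting lemma: the number of $k$-subspaces $Y$ of $V$ with $X\cap Y=Z$, where $\dim X=j$ and $\dim Z=d$, equals $q^{(k-d)(j-d)}\qbinom{n-j}{k-d}_q$.

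\emph{Reduction to the quotient.} Pass to $\bar V=V/Z$, of dimension $n-d$, and let $\bar X=X/Z$, of dimension $j-d$. The map $Y\mapsto \bar Y=Y/Z$ is a bijection between $k$-subspaces $Y$ containing $Z$ and $(k-d)$-subspaces of $\bar V$. Under this bijection, $X\cap Y=Z$ holds if and only if $\bar X\cap\bar Y=\{0\}$. Indeed, $(X\cap Y)/Z=\bar X\cap \bar Y$ because both $X$ and $Y$ contain $Z$. So it suffices to count $m$-subspaces of an $N$-dimensional space that meet a fixed $a$-subspace $A$ trivially, where $N=n-d$, $m=k-d$ and $a=j-d$. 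The target is $q^{ma}\qbinom{N-a}{m}_q$; note that $N-a=n-j$, as required.

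\emph{Counting complements.} Fix a complement $C$ of $A$, so $V'=A\oplus C$ with $\dim C=N-a$. Let $\rho:V'\to C$ be the projection along $A$. A subspace $Y$ with $Y\cap A=\{0\}$ maps injectively under $\rho$, so $\rho(Y)=W$ is an $m$-subspace of $C$. Moreover, $Y$ is the graph $\{w+\phi(w):w\in W\}$ of a unique linear map $\phi:W\to A$. Conversely, every pair $(W,\phi)$, with $W\in\qbinom{C}{m}$ and $\phi\in\mathrm{Hom}(W,A)$, produces an $m$-subspace meeting $A$ trivially, and distinct pairs give distinct subspaces. Hence the count is $\qbinom{N-a}{m}_q\cdot q^{ma}$, as claimed.

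\emph{Alternative check.} One can also count ordered bases $(y_1,\dots,y_m)$ with each $y_i\notin A+\langle y_1,\dots,y_{i-1}\rangle$. This gives $\prod_{i=0}^{m-1}(q^N-q^{a+i})$ choices, which we divide by $|GL_m(q)|$. This confirms the same formula.

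Every step here is routine. The only point needing care is the bijection between subspaces meeting $A$ trivially and graphs of linear maps, in particular its injectivity and surjectivity. That is standard linear algebra.
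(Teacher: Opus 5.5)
Your proof is correct. The paper gives no argument of its own for this lemma; it imports it from \cite{brouwer1989distance} (Lemma~9.3.2(iii)). Your write-up therefore supplies a self-contained proof of exactly the generality the paper needs: the lemma is used with $Z=\{0\}$ to compute $B_q(n,k,s)$, and also with $X=U\oplus I$, $Z=I$ in Lemma~\ref{lem: matching}(ii).

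Your route is the standard one. You quotient by $Z$ to reduce to counting $(k-d)$-subspaces of $V/Z$ that meet $X/Z$ trivially. You then parametrize such subspaces as graphs of linear maps $W\to A$ with $W\in\qbinom{C}{m}$. The ordered-basis count $\prod_{i=0}^{m-1}(q^N-q^{a+i})/|\mathrm{GL}_m(q)|$ is a valid independent check.

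Two minor points are left implicit and are worth stating. Every $Y$ with $X\cap Y=Z$ automatically contains $Z$, so restricting the bijection to $Y\ge Z$ loses nothing. In the degenerate ranges $k<d$ or $k-d>n-j$, both sides equal $0$.
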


Taking $X=S$ with $\dim S=s$ and $Z=\{0\}$ in Lemma~\ref{lem:counting}, we obtain the number of $k$-subspaces of $V$ that intersect $S$ trivially:
\begin{equation*}
\left|\left\{F\in \qbinom{V}{k}: F\cap S=\{0\}\right\}\right|
=
q^{sk}\qbinom{n-s}{k}_q.
\end{equation*}
Therefore,
\begin{equation}\label{eq:B_q(n,k,s)}
B_q(n,k,s)=|\mathcal{B}_q(n,k,s)|
=
\qbinom{n}{k}_q-q^{sk}\qbinom{n-s}{k}_q.
\end{equation}

\begin{lemma}\label{lem:preimage count}
Let $V=U\oplus W$ be a direct sum of vector spaces over $\mathbb{F}_q$, with
$\dim U=t$ and $\dim W=n-t$. Let $\pi:V\to W$ be the canonical projection with kernel $U$. Then for every $k\le n-t$ and every $E\in\qbinom{W}{k}$,
\begin{equation*}
\left|\left\{F\in\qbinom{V}{k}:\pi(F)=E\right\}\right|=q^{kt}.
\end{equation*}
\end{lemma}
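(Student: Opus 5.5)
The plan is to exhibit an explicit bijection between the fibre $\{F\in\qbinom{V}{k}:\pi(F)=E\}$ and the set of linear maps $\mathrm{Hom}(E,U)$. Since $\dim E=k$ and $\dim U=t$, this set has exactly $q^{kt}$ elements. The idea is that each $F$ in the fibre is the graph of a linear map $E\to U$.

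First we check what $\pi(F)=E$ forces. If $F$ is a $k$-subspace with $\pi(F)=E$ and $\dim E=k$, then $\pi|_F:F\to E$ is a surjection between spaces of the same dimension, hence an isomorphism. In particular $F\cap U=F\cap\ker\pi=\{0\}$. We then define $\phi_F:E\to U$ by
\[
\phi_F:=(\mathrm{id}_V-\pi)\circ(\pi|_F)^{-1}.
\]
This is the $U$-component of the unique preimage in $F$. It is linear, and $F=\{e+\phi_F(e):e\in E\}$ because every $f\in F$ decomposes as $\pi(f)+(f-\pi(f))$ with $f-\pi(f)\in U$.

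Conversely, for any linear map $\phi:E\to U$, set $F_\phi:=\{e+\phi(e):e\in E\}$. This is a subspace, and the map $e\mapsto e+\phi(e)$ is injective because $V=U\oplus W$ and $E\le W$. Hence $\dim F_\phi=k$. Moreover $\pi(e+\phi(e))=e$, so $\pi(F_\phi)=E$.

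The two constructions are mutually inverse: $\phi_{F_\phi}=\phi$ and $F_{\phi_F}=F$. Both identities follow directly from the uniqueness of the decomposition $V=U\oplus W$.

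Therefore the fibre is in bijection with $\mathrm{Hom}(E,U)\cong \mathbb{F}_q^{t\times k}$, which gives the count $q^{kt}$. The hypothesis $k\le n-t$ only guarantees that $E$ exists.

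There is no real obstacle here. The only point needing care is noting that $\pi|_F$ is injective, which is what makes each $F$ in the fibre a graph over $E$. As an optional sanity check, one can compare with Lemma~\ref{lem:counting}: summing $q^{kt}$ over all $E\in\qbinom{W}{k}$ gives $q^{kt}\qbinom{n-t}{k}_q$, which is exactly the number of $k$-subspaces meeting $U$ trivially.
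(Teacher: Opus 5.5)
Your proof is correct. The paper omits this proof as ``basic linear algebra,'' and your identification of the fibre with graphs of linear maps $E\to U$, i.e.\ with $\mathrm{Hom}(E,U)$, is exactly the standard argument that fills that gap and gives the count $q^{kt}$.
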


\begin{proof}
The proof is basic linear algebra and is omitted.
\end{proof}

Below we discuss the properties of $\mathcal H_q(n,k,s)$. We begin with the following fact. 

\begin{fact}\label{fact:H_q(n,k,1)}
Let $n,k$ be positive integers with $n> k\geq2$. Then 
\begin{equation*}
H_q(n,k,1)=\qbinom{n-1}{k-1}_q-q^{k(k-1)}\qbinom{n-k-1}{k-1}_q+q^k=B_q(n,k,0)+B_q(n-1,k-1,k)+q^k.
\end{equation*}
\end{fact}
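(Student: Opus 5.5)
We count $\mathcal{H}_q(n,k,1)$ by splitting it into two disjoint parts: the $k$-subspaces in $\qbinom{I+E}{k}$ that do not contain $I$, and the family $\mathcal{A}:=\{F\in\qbinom{V}{k}: I\le F,~F\cap E\neq\{0\}\}$. The two families $\{F: I\le F,\ F\cap E\ne\{0\}\}$ and $\qbinom{I+E}{k}$ overlap exactly in those $F\le I+E$ with $I\le F$, since any $k$-subspace of the $(k+1)$-space $I+E$ meets $E$ nontrivially. Thus
\[
H_q(n,k,1)=|\mathcal{A}|+\Bigl|\Bigl\{F\in\qbinom{I+E}{k}: I\not\le F\Bigr\}\Bigr|.
\]
The second term is easy. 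The hyperplanes of $I+E$ containing $I$ correspond to the $(k-1)$-subspaces of the $k$-dimensional quotient $(I+E)/I$, so there are $\qbinom{k}{k-1}_q$ of them. The total number of hyperplanes is $\qbinom{k+1}{k}_q$. By $q$-Pascal, $\qbinom{k+1}{k}_q-\qbinom{k}{k-1}_q=q^k\qbinom{k}{k}_q=q^k$, which gives the term $q^k$.

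For $|\mathcal{A}|$, we pass to the quotient $\overline V=V/I$, which has dimension $n-1$. The $k$-subspaces $F\ge I$ correspond bijectively to the $(k-1)$-subspaces $\overline F=F/I$ of $\overline V$. Since $I\cap E=\{0\}$, the image $\overline E=(E+I)/I$ is a $k$-subspace of $\overline V$. The key step is to verify that
\[
F\cap E\neq\{0\}\iff \overline F\cap\overline E\neq\{0\}.
\]
For the forward direction, a nonzero $v\in F\cap E$ has $v\notin I$, so its image is a nonzero vector of $\overline F\cap\overline E$. For the converse, a nonzero vector of $\overline F\cap\overline E$ lifts to some $v\in F$ with $v\in E+I$. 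Since $I\le F$, we may subtract the $I$-component and obtain a nonzero vector of $E\cap F$; it is nonzero because the class of $v$ is nonzero.

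Hence $|\mathcal{A}|$ equals the number of $(k-1)$-subspaces of an $(n-1)$-space that meet a fixed $k$-space nontrivially. By Lemma~\ref{lem:counting} with $j=k$, $d=0$, and subspace dimension $k-1$, this number is
\[
\qbinom{n-1}{k-1}_q-q^{k(k-1)}\qbinom{n-1-k}{k-1}_q .
\]
By \eqref{eq:B_q(n,k,s)}, this expression is exactly $B_q(n-1,k-1,k)$.

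Adding the two parts gives the first expression in the Fact. For the second expression, note that $B_q(n,k,0)=\qbinom{n}{k}_q-\qbinom{n}{k}_q=0$ by \eqref{eq:B_q(n,k,s)}, so the two forms agree.

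The only real obstacle is the quotient step, namely checking that trivial intersection with $E$ is preserved modulo $I$. The converse direction is where the hypothesis $I\le F$ is used.
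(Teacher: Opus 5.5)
Your proof is correct, and it takes a different route from the paper only in that the paper never carries out this count. The paper cites equation (1.1) of \cite{blokhuis2010hilton} for the first equality, then reads off the second from \eqref{eq:B_q(n,k,s)} using the convention $B_q(n,k,0)=0$. You instead derive the formula directly from the definition of $\mathcal{H}_q(n,k,1)$:
\begin{itemize}
\item you split $\mathcal{H}_q(n,k,1)$ into the star part $\mathcal{A}$ and the $q^k$ hyperplanes of $I+E$ that miss $I$;
\item you pass to the quotient $V/I$, which identifies $|\mathcal{A}|$ with $B_q(n-1,k-1,k)$ combinatorially rather than by rearranging a formula.
\end{itemize}
Your route gives a self-contained verification. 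It also shows why the term $B_q(n-1,k-1,k)$ appears, which is the same structure behind Proposition~\ref{prop:H_q(n,k,s)}~(ii). The paper's route is shorter but depends on the external reference.

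One point is worth stating explicitly. Your claim that every $k$-subspace of $I+E$ meets $E$ nontrivially is the dimension count $k+k-(k+1)=k-1\ge 1$, so this is exactly where the hypothesis $k\ge 2$ is used. For $k=1$ the stated formula actually fails: it gives $q$, while $|\mathcal{H}_q(n,1,1)|=q+1$.
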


\begin{proof}
The first equality can be found in \cite[(1.1)]{blokhuis2010hilton}. The second follows immediately from \eqref{eq:B_q(n,k,s)}. Here we use the convention $B_q(n,k,0)=0$.
\end{proof}

\begin{proposition}\label{prop:H_q(n,k,s)}
Let $n,k,s$ be positive integers with $n\ge k+s$ and $k\ge 2$. Then
\begin{itemize}
    \item [{\rm (i)}] 
    For $s\ge 2$, $H_q(n,k,s)=\qbinom{n-1}{k-1}_q+q^kH_q(n-1,k,s-1)$;
    \item [{\rm (ii)}] 
    $H_q(n,k,s)=B_q(n,k,s-1)+q^{k(s-1)}B_q(n-s,k-1,k)+q^{ks}$; 
    \item [{\rm (iii)}] $\nu_q(\mathcal H_q(n,k,s))\le s$, $\nu_q(\mathcal H_q'(n,3,s))\le s$;
    \item [{\rm (iv)}] $|\mathcal{H}_q'(n,3,s)|=|\mathcal{H}_q(n,3,s)|$.
\end{itemize}
\end{proposition}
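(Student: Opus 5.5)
The plan is to handle the four parts in order; (i) and (ii) are pure counting, (iii) is an induction on $s$ through the projection $\pi$, and (iv) is again counting.

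\textbf{Part (i).} In the recursive definition \eqref{eq:H_q(n,k,s)}, the two families are disjoint. Members of the second family meet $I_n$ trivially, while members of the first contain $I_n$. The first family has size $\qbinom{n-1}{k-1}_q$, since a $k$-subspace containing $I_n$ corresponds to a $(k-1)$-subspace of $V_n/I_n$. By Lemma~\ref{lem:preimage count} with $t=1$, each $E\in\mathcal H_q(n-1,k,s-1)$ has exactly $q^k$ preimages under $\pi$. This gives (i).

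\textbf{Part (ii).} I would induct on $s$, with base case $s=1$ given by Fact~\ref{fact:H_q(n,k,1)}; there $B_q(n,k,0)=0$ and $q^{k\cdot 0}=1$. For the inductive step, substitute (ii) for $H_q(n-1,k,s-1)$ into (i). This yields
\[
\qbinom{n-1}{k-1}_q+q^kB_q(n-1,k,s-2)+q^{k(s-1)}B_q(n-s,k-1,k)+q^{ks}.
\]
So it suffices to show $B_q(n,k,s-1)=\qbinom{n-1}{k-1}_q+q^kB_q(n-1,k,s-2)$. This follows from \eqref{eq:B_q(n,k,s)} and the $q$-Pascal identity $\qbinom{n}{k}_q=\qbinom{n-1}{k-1}_q+q^k\qbinom{n-1}{k}_q$, because $q^k\cdot q^{k(s-2)}\qbinom{n-1-(s-2)}{k}_q=q^{k(s-1)}\qbinom{n-s+1}{k}_q$. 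Combinatorially, $\mathcal B_q$ satisfies the same recursion when the fixed $(s-1)$-subspace is taken to be $I_n\oplus V_2'$.

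\textbf{Part (iii).} I would again induct on $s$. For the base case $s=1$, I need $\mathcal H_q(n,k,1)$ and $\mathcal H_q'(n,3,1)$ to be intersecting. For $\mathcal H_q(n,k,1)$ there are three cases:
\begin{itemize}
\item two members containing $I$ meet in $I$;
\item a member containing $I$ and meeting $E$ nontrivially lies in $I+E$ together with a $k$-subspace of the $(k+1)$-space $I+E$, so they meet by dimension count;
\item two $k$-subspaces of $I+E$ meet, since $2k>k+1$.
\end{itemize}
For $\mathcal H_q'(n,3,1)$, two $2$-dimensional subspaces of the $3$-space $E$ meet nontrivially.

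For the inductive step, suppose $F_1,\dots,F_{s+1}\in\mathcal H_q(n,k,s)$ have direct sum. At most one of them contains $I_n$, because otherwise $I_n$ lies in two summands. Hence at least $s$ of them, say $F_1,\dots,F_s$, satisfy $\pi(F_i)\in\mathcal H_q(n-1,k,s-1)$.

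The key step is that $\pi$ preserves directness among these $s$ subspaces. Since $\ker\pi=I_n$, we have $\dim\pi(F_1+\dots+F_s)\ge sk-1$, which by itself is not enough. I would therefore use the remaining summand $F_{s+1}$:
\begin{itemize}
\item If $F_{s+1}\ni I_n$, then $(F_1+\dots+F_s)\cap I_n=0$ by directness. So $\pi$ is injective on $F_1\oplus\dots\oplus F_s$, the images $\pi(F_i)$ have direct sum of dimension $sk$, and this contradicts the induction hypothesis.
\item If all $s+1$ subspaces are of the second type, then $I_n$ can meet $F_1\oplus\dots\oplus F_{s+1}$ in dimension at most $1$. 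Hence $\dim\pi(F_1+\dots+F_{s+1})\ge (s+1)k-1$. By a standard exchange argument, some $s$ of the images $\pi(F_i)$ have direct sum, since dropping the summand that contains the component of the kernel vector restores injectivity. This again contradicts the induction hypothesis.
\end{itemize}
The same argument works verbatim for $\mathcal H_q'$. I expect this exchange step to be the main point requiring care. Concretely: if $v\in I_n\cap\bigoplus F_i$ is nonzero, write $v=\sum f_i$ and pick $j$ with $f_j\ne0$; then $\pi$ is injective on $\bigoplus_{i\ne j}F_i$.

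\textbf{Part (iv).} By the recursion, both families satisfy (i) with the same first term. So it suffices to check $|\mathcal H_q'(n,3,1)|=H_q(n,3,1)$. Count $3$-subspaces $F$ with $F=E$ or $\dim(F\cap E)=2$. Lemma~\ref{lem:counting} with $j=3$, $d=2$ gives $\qbinom{3}{2}_q\,q^{1\cdot1}\qbinom{n-3}{1}_q+1$. Then compare this with Fact~\ref{fact:H_q(n,k,1)} at $k=3$, namely $\qbinom{n-1}{2}_q-q^6\qbinom{n-4}{2}_q+q^3$, by direct algebraic simplification; for instance, expand $\qbinom{n-1}{2}_q-q^6\qbinom{n-4}{2}_q$ via repeated $q$-Pascal.
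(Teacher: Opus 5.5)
Your proposal is correct and follows the paper's proof in all four parts. For (i) you use the disjoint split with Lemma~\ref{lem:preimage count}. For (ii) you induct on $s$ via $B_q(n,k,s-1)=\qbinom{n-1}{k-1}_q+q^kB_q(n-1,k,s-2)$. For (iii) you use the same ``at most one member contains $I_n$'' induction, and your kernel-vector exchange argument is a clean explicit version of the paper's ``elementary dimension count''. For (iv) you reduce to $s=1$, where both counts indeed equal $1+q\qbinom{3}{1}_q\qbinom{n-3}{1}_q$.

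One wording fix is needed in the base case of (iii). A member $F\ni I$ meeting $E$ need not lie in $I+E$. What is true is that $\dim(F\cap(I+E))\ge 2$, and this gives $\dim(F\cap G)\ge 2+k-(k+1)=1$ for every $G\in\qbinom{I+E}{k}$.
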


\begin{proof}
For {\rm (i)}, let
\begin{equation*}
\mathcal X:=\left\{F\in\qbinom{V_n}{k}:I_n\le F\right\}
\qquad\text{and}\qquad
\mathcal Y:=\left\{F\in\qbinom{V_n}{k}:\pi(F)\in \mathcal H_q(n-1,k,s-1)\right\}.
\end{equation*}
Then by \eqref{eq:H_q(n,k,s)}, $\mathcal H_q(n,k,s)=\mathcal X\cup\mathcal Y$, and $\mathcal X\cap\mathcal Y=\varnothing$, since $I_n\le F$ implies $\dim \pi(F)=k-1$, whereas $\pi(F)\in\mathcal H_q(n-1,k,s-1)\subseteq\qbinom{V_{n-1}}{k}$ implies $\dim \pi(F)=k$. Hence
\begin{equation*}
H_q(n,k,s)=|\mathcal X|+|\mathcal Y|=\qbinom{n-1}{k-1}_q+q^kH_q(n-1,k,s-1),
\end{equation*}
where $|\mathcal X|=\qbinom{n-1}{k-1}_q$ and $|\mathcal Y|=q^kH_q(n-1,k,s-1)$ by Lemma~\ref{lem:preimage count}. 

To prove {\rm (ii)}, we apply induction on $s$. The case $s=1$ is Fact~\ref{fact:H_q(n,k,1)}. For $s\ge2$, assuming
\begin{equation*}
H_q(n-1,k,s-1)=B_q(n-1,k,s-2)+q^{k(s-2)}B_q(n-s,k-1,k)+q^{k(s-1)},
\end{equation*}
we obtain
\begin{equation*}
H_q(n,k,s)=\left(\qbinom{n-1}{k-1}_q+q^kB_q(n-1,k,s-2)\right)+q^{k(s-1)}B_q(n-s,k-1,k)+q^{ks}.
\end{equation*}
Since
\begin{equation*}
B_q(n,k,s-1)=\qbinom{n-1}{k-1}_q+q^kB_q(n-1,k,s-2),
\end{equation*}
by the definition of $B_q$ and the $q$-Pascal identity, the second equality in {\rm (ii)} follows.

For {\rm (iii)}, we again apply induction on $s$. The case $s=1$ is clear from the definition of $\mathcal H_q(n,k,1)$ and $\mathcal H_q'(n,3,1)$. In what follows, we give the proof of the induction step only for $\mathcal{H}_q(n,k,s)$; the proof for $\mathcal{H}'_q(n,3,s)$ is analogous. Assume $s\ge2$, and let $F_1,\ldots,F_{s+1}\in \mathcal H_q(n,k,s)$. If at least two of them lie in $\mathcal X$, then they both contain $I_n$, so they cannot form a direct sum. Thus at most one lies in $\mathcal X$. If exactly one, say $F_{s+1}\in\mathcal X$, then directness of $F_1,\ldots,F_{s+1}$ would force $(F_1+\cdots+F_s)\cap I_n=\{0\}$, so $\pi(F_1),\ldots,\pi(F_s)$ would form a direct sum in $\mathcal H_q(n-1,k,s-1)$, contradicting the induction hypothesis $\nu_q(\mathcal H_q(n-1,k,s-1))\le s-1$. Finally, if all $F_i$ lie in $\mathcal Y$, then for $K_i:=\pi(F_i)\in\mathcal H_q(n-1,k,s-1)$ we have
\begin{equation*}
\dim(K_1+\cdots+K_{s+1})\ge (s+1)k-1,
\end{equation*}
since $\ker\pi=I_n$ has dimension $1$. An elementary dimension count then shows that some $s$ of the $K_i$ form a direct sum, again contradicting the induction hypothesis. Thus $\nu_q(\mathcal H_q(n,k,s))\le s$.

For {\rm (iv)}, first observe that it is straightforward to verify by Fact \ref{fact:H_q(n,k,1)} and Lemma \ref{lem:counting} that $|\mathcal H_q(n,3,1)|=|\mathcal H'_q(n,3,1)|$. Since $\mathcal H_q(n,3,1)$ and $\mathcal H_q'(n,3,1)$ are defined by essentially the same recurrence \eqref{eq:H_q(n,k,s)} and \eqref{eq:H_q'(n,3,s)}, we obtain $|\mathcal{H}_q(n,3,s)|=|\mathcal{H}_q'(n,3,s)|$ for every $s\ge 2$.
\end{proof}

\section{Proof of \cref{thm:vemc} (i)}\label{sec:k=2}

In this section, we prove \cref{thm:vemc} (i). Our main tool is the following minimax theorem of Lov\'asz.

\begin{lemma}[{\cite[Theorem 2]{lovasz1980selecting}}]\label{lem: min-max}
For every $\mathcal{F}\subseteq\qbinom{V}{2}$, one has
\begin{equation*}
    \nu_q(\mathcal{F})=\min\left\{\dim(A)+\sum_{i=1}^m\left\lfloor\frac{\dim(A_i)-\dim(A)}{2}\right\rfloor\right\},
\end{equation*}
where the minimum is taken over all integers $m\ge 0$ and all subspaces $A,A_1,\dots,A_m\le V$ satisfying $A\le A_i$ for all $i\in[m]$, with the property that for every $F\in\mathcal{F}$, either $F\cap A\neq\{0\}$ or $F\le A_i$ for some $i\in[m]$.
\end{lemma}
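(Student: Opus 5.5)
The plan is to prove the two inequalities separately. The inequality $\nu_q(\mathcal F)\le \min\{\cdots\}$ (weak duality) is elementary and I would do it by a dimension count. The reverse inequality is the deep part, and I would obtain it from Lov\'asz's min--max theorem for matroid matching in linear matroids (the matroid parity theorem).

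\emph{Weak duality.} Fix an admissible system $A\le A_1,\dots,A_m$, and let $F_1,\dots,F_\nu\in\mathcal F$ have direct sum.
\begin{itemize}
\item Let $J$ be the set of indices $j$ with $F_j\cap A\ne\{0\}$.
\item Assign every other $F_j$ to one index $i$ with $F_j\le A_i$, and let $L_i$ be the set of lines assigned to $i$.
\item Put $S_i=\sum_{j\in L_i}F_j$, so that $\dim S_i=2|L_i|$, and let $a_i=\dim(S_i\cap A)$.
\end{itemize}
Because the whole sum is direct, the subspaces $F_j\cap A$ for $j\in J$ and $S_i\cap A$ for $i\in[m]$ are independent inside $A$. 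This gives
\[
|J|+\sum_i a_i\le\dim A.
\]
Next, $S_i+A\le A_i$, so $2|L_i|-a_i\le \dim A_i-\dim A=:d_i$. Since $|L_i|$ is an integer, $|L_i|\le\lfloor (d_i+a_i)/2\rfloor\le \lfloor d_i/2\rfloor+a_i$. Summing over $i$,
\[
\nu=|J|+\sum_i|L_i|\le |J|+\sum_i a_i+\sum_i\lfloor d_i/2\rfloor\le \dim A+\sum_i\lfloor d_i/2\rfloor,
\]
which is the required bound.

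\emph{Strong duality.} I would encode each $F\in\mathcal F$ as a pair $\{u_F,v_F\}$ of basis vectors of $F$. This places us in a linear matroid parity problem in $V$, where a matching is a set of pairs whose union is independent. Such a matching of size $\nu$ is exactly a set of $\nu$ members of $\mathcal F$ whose sum is direct. Lov\'asz's matroid matching theorem for linear matroids states that the maximum matching size equals
\[
\min\Bigl\{\dim A+\sum_i\Bigl\lfloor \tfrac{1}{2}\dim\bigl((A+\textstyle\sum_{F\in\mathcal K_i}F)/A\bigr)\Bigr\rfloor\Bigr\},
\]
where $A$ ranges over subspaces and $\{\mathcal K_i\}$ over partitions of $\mathcal F$. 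I would translate this into the stated form in three steps.
\begin{itemize}
\item Put $A_i:=A+\sum_{F\in\mathcal K_i}F$. Then every $F\in\mathcal K_i$ satisfies $F\le A_i$, so the system $(A,A_1,\dots,A_m)$ is admissible.
\item The lines with $F\cap A\ne\{0\}$ may be moved out of the classes, because the covering condition is already satisfied for them.
\item This cannot increase the objective, so the two minima agree.
\end{itemize}
The remaining obstacle is Lov\'asz's theorem itself. If I had to prove it rather than cite it, I would follow his approach. Take a maximum matching $\mathcal M$ and consider the space $\sum_{F\in\mathcal M}F$. Grow a structure of ``double flowers'' via augmenting sequences. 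When no augmentation exists, read off $A$ as the span of the stems and the $A_i$ as the spans of the flowers. The delicate step is showing that the flower spans can be chosen so that each $A_i/A$ has odd dimension $2\ell_i+1$ and contains exactly $\ell_i$ matching lines.
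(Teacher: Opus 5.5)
Your proposal is correct and, like the paper, rests entirely on Lov\'asz's min--max theorem. The paper simply cites Theorem~2 of Lov\'asz's ``selecting independent lines'' paper, which is already stated in this covering form. You instead cite the partition form of linear matroid parity, map each $(A,\{\mathcal K_i\})$ to the admissible system $A_i=A+\sum_{F\in\mathcal K_i}F$, and supply a valid direct dimension count for the inequality $\nu_q(\mathcal F)\le\min$. Your closing sketch of how one would prove Lov\'asz's theorem is not needed, since the result is cited in both approaches.
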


We need two simple lemmas to prove Theorem~\ref{thm:vemc}~(i).

\begin{lemma}\label{lem: superadditivity}
For all $x,y\in\mathbb{Z}_{\ge 0}$,
\begin{equation*}
\qbinom{2x+1}{2}_q+\qbinom{2y+1}{2}_q\leq \qbinom{2(x+y)+1}{2}_q,
\end{equation*}
and the inequality is strict whenever $x,y>0$.
\end{lemma}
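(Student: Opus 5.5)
We prove Lemma~\ref{lem: superadditivity} by direct computation with the explicit formula for $\qbinom{m}{2}_q$. Put
\begin{equation*}
g(x):=\qbinom{2x+1}{2}_q=\frac{(q^{2x+1}-1)(q^{2x}-1)}{(q^2-1)(q-1)}.
\end{equation*}
Then $g(0)=0$, and the claim is that $g$ is superadditive on $\mathbb{Z}_{\ge 0}$, with strict inequality when both arguments are positive.

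\textbf{The case $x=0$ or $y=0$.} Since $g(0)=0$, the inequality holds with equality. So we may assume $x,y\ge 1$ and aim for strict inequality.

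\textbf{Clearing the denominator.} Multiply through by $(q^2-1)(q-1)>0$ and set $a=q^{2x}$, $b=q^{2y}$, so $a,b\ge q^2\ge 4$. Then $(q^{2x+1}-1)(q^{2x}-1)=qa^2-(q+1)a+1$. The desired strict inequality
\begin{equation*}
qa^2-(q+1)a+1+qb^2-(q+1)b+1<q(ab)^2-(q+1)ab+1
\end{equation*}
is equivalent to
\begin{equation*}
q\bigl(a^2b^2-a^2-b^2\bigr)-(q+1)(ab-a-b)-1>0.
\end{equation*}

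\textbf{Verifying positivity.} Write $a^2b^2-a^2-b^2=(ab-a-b)(ab+a+b)+2ab$. The expression then becomes
\begin{equation*}
(ab-a-b)\bigl(q(ab+a+b)-(q+1)\bigr)+2qab-1.
\end{equation*}
Since $a,b\ge 4$, we have $ab-a-b=(a-1)(b-1)-1>0$. Also $q(ab+a+b)>q+1$, and $2qab-1>0$. Hence every term is positive and the inequality is strict.

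The only potential obstacle is choosing a clean algebraic grouping. The factorisation above handles it directly, so no step is expected to be difficult.
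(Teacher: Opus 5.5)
Your proof is correct and follows the same route as the paper: clear the denominator $(q^2-1)(q-1)$ and show that the resulting polynomial in $a=q^{2x}$, $b=q^{2y}$ is positive. The only difference is the algebraic grouping. The paper factors the difference completely as $(a-1)(b-1)(qab+qa+qb-1)$, which gives the inequality for all $x,y\ge 0$ and the equality case ($x=0$ or $y=0$) at once, whereas you handle the zero case separately and then use $a,b\ge 4$.
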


\begin{proof}
A direct calculation gives
\begin{equation*}
\qbinom{2(x+y)+1}{2}_q-\qbinom{2x+1}{2}_q-\qbinom{2y+1}{2}_q
=
\frac{(q^{2x}-1)(q^{2y}-1)(q^{2x+2y+1}+q^{2x+1}+q^{2y+1}-1)}{(q^2-1)(q-1)},
\end{equation*}
which is nonnegative for all $x,y\ge 0$, and positive whenever $x,y>0$.
\end{proof}

\begin{lemma}\label{lem: convexity}
For integers $a\in\{0,1,\ldots,s\}$, let
$h(a)=\qbinom{n}{2}_q-q^{2a}\qbinom{n-a}{2}_q+q^{2a}\qbinom{2(s-a)+1}{2}_q$. Then $h$ is strictly convex on $\{0,1,\ldots,s\}$ in the discrete sense.
\end{lemma}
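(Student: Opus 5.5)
The plan is to compute $h(a)$ in closed form as a function of $a$ and observe that, up to an additive constant and a positive factor, it is a positive combination of two exponential functions of $a$. For this I use the explicit formula
\begin{equation*}
\qbinom{m}{2}_q=\frac{(q^m-1)(q^{m-1}-1)}{D},\qquad D:=(q^2-1)(q-1)>0,
\end{equation*}
and multiply $h(a)$ through by $D$.

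\textbf{Second term.} Expanding gives
\begin{equation*}
q^{2a}(q^{n-a}-1)(q^{n-a-1}-1)=q^{2n-1}-(q+1)q^{n-1}\,q^{a}+q^{2a}.
\end{equation*}

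\textbf{Third term.} Similarly,
\begin{equation*}
q^{2a}(q^{2s-2a+1}-1)(q^{2s-2a}-1)=q^{4s+1}\,q^{-2a}-(q+1)q^{2s}+q^{2a}.
\end{equation*}

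\textbf{Combining.} The second term enters $h$ with a minus sign and the third with a plus sign, so the two $q^{2a}$ contributions cancel exactly. This cancellation is the key observation, and the one point to double-check. We obtain
\begin{equation*}
D\,h(a)=C+(q+1)q^{n-1}\,q^{a}+q^{4s+1}\,q^{-2a},
\end{equation*}
where $C=D\qbinom{n}{2}_q-q^{2n-1}-(q+1)q^{2s}$ does not depend on $a$.

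\textbf{Convexity.} The second difference of $h$ at $a$, for $1\le a\le s-1$, is then
\begin{equation*}
D\bigl(h(a+1)-2h(a)+h(a-1)\bigr)=(q+1)q^{n-1}q^{a-1}(q-1)^2+q^{4s+1}q^{-2a-2}(q^{2}-1)^2 .
\end{equation*}
Here we use that for $g(a)=c\lambda^a$ the second difference is $c\lambda^{a-1}(\lambda-1)^2$, applied with $\lambda=q$ and with $\lambda=q^{-2}$. Both summands are strictly positive since $q\ge2$, which gives strict discrete convexity of $h$ on $\{0,1,\ldots,s\}$.

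There is no real obstacle beyond carrying out the expansion carefully. The expression is valid for all $a\in\{0,\dots,s\}$ even when $n-a<2$ or $2(s-a)+1<2$, because the product formula for $\qbinom{m}{2}_q$ correctly returns $0$ for $m\in\{0,1\}$.
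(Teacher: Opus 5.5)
Your proof is correct and is the same direct computation as the paper's: your second difference, divided by $D=(q^2-1)(q-1)$, is exactly the paper's expression $\frac{q^{n+a-2}(q+1)(q-1)^2+q^{4s-2a-1}(q^2-1)^2}{(q^2-1)(q-1)}$, since $q^{n-1}q^{a-1}=q^{n+a-2}$ and $q^{4s+1}q^{-2a-2}=q^{4s-2a-1}$. Your rewriting of $D\,h(a)$ as a constant plus $(q+1)q^{n-1}q^{a}+q^{4s+1}q^{-2a}$, with the $q^{2a}$ terms cancelling, makes explicit the calculation that the paper leaves to the reader.
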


\begin{proof}
A direct calculation gives
\begin{equation*}
h(a-1)+h(a+1)-2h(a)
=
\frac{q^{n+a-2}(q+1)(q-1)^2+q^{4s-2a-1}(q^2-1)^2}{(q^2-1)(q-1)}>0.
\end{equation*}
Hence $h$ is strictly convex.
\end{proof}

Now we are ready to prove Theorem~\ref{thm:vemc}~(i).

\begin{proof}[Proof of Theorem~\ref{thm:vemc}~(i)]
Let $\mathcal{F}\subseteq \qbinom{V}{2}$ satisfy $\nu_q(\mathcal{F})\le s$.
By Lemma~\ref{lem: min-max}, we may choose subspaces $A,A_1,\dots,A_m\le V$ such that, for every $F\in\mathcal{F}$, either $F\cap A\neq \{0\}$ or $F\le A_i$ for some $i\in[m]$, and
\begin{equation*}
\dim(A)+\sum_{i=1}^m\left\lfloor \frac{\dim(A_i)-\dim(A)}{2}\right\rfloor=\nu_q(\mathcal{F})\le s.
\end{equation*}
Set $a:=\dim(A)$, $d_i:=\dim(A_i)$ and $t_i:=\lfloor \frac{d_i-a}{2}\rfloor$. Then we have $a+\sum_{i=1}^m t_i\le s$, and $d_i-a\leq 2t_i+1$.
Let
\begin{equation*}
\mathcal{N}_A=\left\{T\in\qbinom{V}{2}: T\cap A\neq\{0\}\right\}
\quad\text{and}\quad
\mathcal{M}_i=\left\{T\in\qbinom{V}{2}: T\cap A=\{0\},~T\le A_i\right\}.
\end{equation*}
Then, by the choice of $A,A_1,\dots,A_m$, we have
\begin{equation*}
\begin{aligned}
|\mathcal{F}|
&\le |\mathcal{N}_A|+\sum_{i=1}^m |\mathcal{M}_i|\\
&= \qbinom{n}{2}_q-q^{2a}\qbinom{n-a}{2}_q+q^{2a}\sum_{i=1}^m \qbinom{d_i-a}{2}_q\\
&\leq \qbinom{n}{2}_q-q^{2a}\qbinom{n-a}{2}_q+q^{2a}\sum_{i=1}^m \qbinom{2t_i+1}{2}_q\\
&\le \qbinom{n}{2}_q-q^{2a}\qbinom{n-a}{2}_q+q^{2a}\qbinom{2\left(\sum_{i=1}^m t_i\right)+1}{2}_q\\
&\le \qbinom{n}{2}_q-q^{2a}\qbinom{n-a}{2}_q+q^{2a}\qbinom{2(s-a)+1}{2}_q, 
\end{aligned}
\end{equation*}
where the first equality follows from Lemma \ref{lem:counting}, the second and the last inequalities follow from the monotonicity of $\qbinom{x}{2}_q$, and the third inequality follows from Lemma~\ref{lem: superadditivity}.

By Lemma~\ref{lem: convexity}, $h(a)=\qbinom{n}{2}_q-q^{2a}\qbinom{n-a}{2}_q+q^{2a}\qbinom{2(s-a)+1}{2}_q$ is strictly convex in $a$, therefore
\begin{equation*}
|\mathcal{F}|\leq\max\{h(0),h(s)\}=\max\left\{\qbinom{2s+1}{2}_q,~\qbinom{n}{2}_q-q^{2s}\qbinom{n-s}{2}_q\right\}.
\end{equation*}
 
Now suppose that equality holds.
Then every inequality in the above chain must be an equality. In particular, by Lemma~\ref{lem: convexity} we must have $a\in\{0,s\}$, and equality in the last inequality implies $\sum_{i=1}^m t_i=s-a$.

If $a=0$, then $\sum_{i=1}^m t_i=s$. By Lemma~\ref{lem: superadditivity}, equality in the third inequality implies that at most one $t_i$ is positive. Hence there is a unique index $j$ such that $t_j=s$, while $t_i=0$ for all $i\ne j$. Since $d_i-a\le 2t_i+1$ for all $i$, we obtain $d_i\le 1$ for all $i\ne j$. Moreover, equality in $\qbinom{d_j-a}{2}_q\le \qbinom{2t_j+1}{2}_q$ implies $d_j-a=2t_j+1=2s+1$, so $d_j=2s+1$. Thus $\mathcal{N}_A=\varnothing$ and $\mathcal{M}_i=\varnothing$ for all $i\ne j$, and hence $\mathcal{F}\subseteq \qbinom{A_j}{2}$. Since $|\mathcal{F}|=\qbinom{2s+1}{2}_q=|\qbinom{A_j}{2}|$, it follows that $\mathcal{F}=\qbinom{A_j}{2}$, that is, $\mathcal{F}$ is isomorphic to $\mathcal{A}_q(2,s)$.

If $a=s$, then $\sum_{i=1}^m t_i=0$, so every $t_i=0$. Since $d_i-a\le 2t_i+1$ for all $i$, we have $d_i-a\le 1$, and therefore each $\mathcal{M}_i$ is empty. Thus $\mathcal{F}\subseteq \mathcal{N}_A$. Since $|\mathcal{F}|=|\mathcal{N}_A|=\qbinom{n}{2}_q-q^{2s}\qbinom{n-s}{2}_q$, we conclude that $\mathcal{F}=\mathcal{N}_A$, so $\mathcal{F}$ is isomorphic to $\mathcal{B}_q(n,2,s)$.
\end{proof}

\section{Proof of \cref{thm:vemc} (ii)}\label{sec:Universal bound}

In this section, we prove \cref{thm:vemc} (ii). We first prove the special case $k\mid n$ by an averaging argument. This yields the identity $m_q((s+1)k,k,s)=\qbinom{(s+1)k-1}{k}_q$. We then characterize the extremal family using an earlier result of Chowdhury and Patk\'os \cite{chowdhury2010shadows} on $r$-wise intersecting families of vector spaces. Finally, we prove the general case using a high-dimensional Hoffman bound \cite{filmus2021high}.

\subsection{The special case $k\mid n$}

\begin{proposition}\label{thm:k|n}
Suppose that $n\ge (s+1)k$ and $k\mid n$. Then
\begin{equation*}
m_q(n,k,s)\le \qbinom{s}{1}_{q^k}\qbinom{n-1}{k-1}_q.
\end{equation*}
\end{proposition}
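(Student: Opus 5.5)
Write $n=mk$ with $m\ge s+1$. The plan is a Katona-type averaging over "spreads" obtained by a field-extension trick. Identify $V$ with $\mathbb{F}_{q^k}^m$, viewed as an $n$-dimensional $\mathbb{F}_q$-space. Each $1$-dimensional $\mathbb{F}_{q^k}$-subspace of $\mathbb{F}_{q^k}^m$ is a $k$-dimensional $\mathbb{F}_q$-subspace of $V$. Call these $k$-subspaces the \emph{special} ones, and denote their collection by $\mathcal{P}$. There are $N:=\qbinom{m}{1}_{q^k}$ special subspaces, and they form a Desarguesian structure on which $\mathrm{GL}_m(\mathbb{F}_{q^k})$ acts.

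The key observation is that if special subspaces $P_1,\dots,P_j$ are $\mathbb{F}_{q^k}$-linearly independent lines, then their $\mathbb{F}_q$-sum is direct, since its $\mathbb{F}_q$-dimension is $jk$. Hence if $\mathcal{F}\cap\mathcal{P}$, viewed as a set of points of $\mathrm{PG}(m-1,q^k)$, contained $s+1$ independent points, we would get $\nu_q(\mathcal{F})\ge s+1$. Therefore the points of $\mathcal{F}\cap\mathcal{P}$ span an $\mathbb{F}_{q^k}$-subspace of dimension at most $s$. It follows that
\begin{equation*}
|\mathcal{F}\cap\mathcal{P}|\le \qbinom{s}{1}_{q^k}.
\end{equation*}

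Next we average over random images $g\mathcal{P}$, where $g$ is uniform in $\mathrm{GL}_n(\mathbb{F}_q)$. For each $g$, the family $g\mathcal{P}$ still has the property above, because $g^{-1}\mathcal{F}$ also has $\nu_q\le s$. So
\begin{equation*}
\mathbb{E}\,|\mathcal{F}\cap g\mathcal{P}|\le \qbinom{s}{1}_{q^k}.
\end{equation*}
Since $\mathrm{GL}_n(\mathbb{F}_q)$ is transitive on $\qbinom{V}{k}$, each $F\in\mathcal{F}$ lies in $g\mathcal{P}$ with probability $N/\qbinom{n}{k}_q$. Hence
\begin{equation*}
|\mathcal{F}|\cdot \frac{N}{\qbinom{n}{k}_q}\le \qbinom{s}{1}_{q^k}.
\end{equation*}
It remains to check that
\begin{equation*}
\frac{\qbinom{n}{k}_q}{N}=\qbinom{n-1}{k-1}_q .
\end{equation*}
Indeed, $N=(q^{n}-1)/(q^k-1)$ and
\begin{equation*}
\qbinom{n}{k}_q=\frac{q^n-1}{q^k-1}\qbinom{n-1}{k-1}_q,
\end{equation*}
which gives the claimed bound.

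The step needing the most care is the independence claim. One must verify that $\mathbb{F}_{q^k}$-independence of the $P_i$ really gives $\mathbb{F}_q$-directness; this holds by the dimension count. One must also correctly bound the size of a set of points in $\mathrm{PG}(m-1,q^k)$ whose rank is at most $s$. The rank bound is immediate: such a set lies inside an $s$-dimensional $\mathbb{F}_{q^k}$-subspace, which has exactly $\qbinom{s}{1}_{q^k}$ points. The remaining point is to use transitivity rather than requiring the spreads themselves to be equivalent, which is justified as above.
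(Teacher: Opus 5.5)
Your proposal is correct and follows essentially the same argument as the paper: the same field-extension family of $\qbinom{m}{1}_{q^k}$ special $k$-subspaces, the same bound $|\mathcal{F}\cap\mathcal{P}|\le\qbinom{s}{1}_{q^k}$ via $\mathbb{F}_{q^k}$-rank (which the paper phrases as $m_{q^k}(\ell,1,s)=\qbinom{s}{1}_{q^k}$), and the same averaging over a uniformly random element of $\mathrm{GL}(V)$ using transitivity and $\qbinom{n}{k}_q=\frac{q^n-1}{q^k-1}\qbinom{n-1}{k-1}_q$. The only cosmetic difference is that you move the special family by $g$ rather than moving $\mathcal{F}$, which is equivalent.
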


\begin{proof}
Write $n=\ell k$, where $\ell\ge s+1$. Let $W=\mathbb{F}_{q^k}^{\,\ell}$ and $V=\mathbb{F}_q^n$, and fix an $\mathbb{F}_q$-linear isomorphism $\varPhi:W\to V$. Define
\begin{equation*}
\mathcal{B}:=\{\varPhi(L):L\le W,~\dim_{\mathbb{F}_{q^k}}L=1\}.
\end{equation*}
Then $\mathcal{B}\subseteq\qbinom{V}{k}$, since every $1$-dimensional $\mathbb{F}_{q^k}$-subspace of $W$ has dimension $k$ over $\mathbb{F}_q$, and $|\mathcal{B}|=\qbinom{\ell}{1}_{q^k}$.

We claim that every subfamily $\mathcal{G}\subseteq\mathcal{B}$ with $\nu_q(\mathcal{G})\le s$ satisfies $|\mathcal{G}|\le \qbinom{s}{1}_{q^k}$. Indeed, $\varPhi^{-1}(\mathcal{G})$ is a family of $1$-dimensional $\mathbb{F}_{q^k}$-subspaces of $W$, and $\nu_{q^k}(\varPhi^{-1}(\mathcal{G}))\le s$; otherwise, there would exist $L_1,\dots,L_{s+1}\in \varPhi^{-1}(\mathcal{G})$ whose sum is direct over $\mathbb{F}_{q^k}$. Then
\begin{equation*}
\dim_{\mathbb{F}_q}\bigl(\varPhi(L_1)+\cdots+\varPhi(L_{s+1})\bigr)
=
k\,\dim_{\mathbb{F}_{q^k}}(L_1+\cdots+L_{s+1})
=
k(s+1),
\end{equation*}
so $\varPhi(L_1),\dots,\varPhi(L_{s+1})$ form an $(s+1)$-matching in $\mathcal{G}$, a contradiction. Hence $|\mathcal{G}|=|\varPhi^{-1}(\mathcal{G})|\le m_{q^k}(\ell,1,s)=\qbinom{s}{1}_{q^k}$.

Now let $\mathcal{F}\subseteq\qbinom{V}{k}$ satisfy $\nu_q(\mathcal{F})\le s$, and choose $A\in\operatorname{GL}(V)$ uniformly at random. Since $\nu_q(A(\mathcal{F}))\le s$, applying the claim to $A(\mathcal{F})\cap\mathcal{B}$ gives $\mathbb{E}|A(\mathcal{F})\cap\mathcal{B}|\le \qbinom{s}{1}_{q^k}$. On the other hand, by linearity of expectation and the transitivity of the action of $\operatorname{GL}(V)$ on $\qbinom{V}{k}$, for each fixed $F\in\qbinom{V}{k}$ the random subspace $A(F)$ is uniformly distributed on $\qbinom{V}{k}$. Therefore
\begin{equation*}
\Pr[A(F)\in\mathcal{B}]
=
\frac{|\mathcal{B}|}{\qbinom{n}{k}_q}
=
\frac{\qbinom{\ell}{1}_{q^k}}{\qbinom{n}{k}_q}
=
\frac{1}{\qbinom{n-1}{k-1}_q},
\end{equation*}
and hence
\begin{equation*}
\mathbb{E}|A(\mathcal{F})\cap\mathcal{B}|
=
\sum_{F\in\mathcal{F}}\Pr[A(F)\in\mathcal{B}]
=
\frac{|\mathcal{F}|}{\qbinom{n-1}{k-1}_q}.
\end{equation*}
Comparing the two bounds, we obtain $|\mathcal{F}|\le \qbinom{s}{1}_{q^k}\qbinom{n-1}{k-1}_q$, as required.
\end{proof}

The above proposition yields $m_q((s+1)k,k,s)\le \qbinom{(s+1)k-1}{k}_q$, and this bound is attained by the construction $\mathcal{A}_q(k,s)$ in \eqref{eq:f1}. We now characterize the extremal family attaining equality. Recall that a family $\mathcal{G}\subseteq\qbinom{V}{k}$ is called {\it $r$-wise intersecting} if $\dim(\bigcap_{i=1}^r G_i)\ge 1$ for all $G_1,\ldots,G_r\in\mathcal{G}$. We shall use the following lemma, which is a special case of Theorem~1.5 in \cite{chowdhury2010shadows}.

\begin{lemma}[\cite{chowdhury2010shadows}]\label{lem:extremal-family}
Let $V$ be an $n$-dimensional vector space over $\mathbb{F}_q$. If $\mathcal{G}\subseteq\qbinom{V}{k}$ is $r$-wise intersecting and $(r-1)n\ge rk$, then $|\mathcal{G}|\le\qbinom{n-1}{k-1}_q$. Moreover, equality holds if and only if $\mathcal{G}=\{G\in\qbinom{V}{k}:L\le G\}$ for some fixed $1$-subspace $L\le V$, unless $r=2$ and $n=2k$.
\end{lemma}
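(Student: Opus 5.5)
The plan is to reduce Lemma~\ref{lem:extremal-family} to the vector-space Erd\H{o}s--Ko--Rado theorem wherever possible, and to handle the remaining range by dualizing and using a $q$-analogue of the Kruskal--Katona shadow bound, as in Chowdhury and Patk\'os \cite{chowdhury2010shadows}.

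\textbf{Reduction to EKR for $n\ge 2k$.} The members $G_1,\ldots,G_r$ in the definition need not be distinct. Taking $G_1=\cdots=G_{r-1}$ shows that an $r$-wise intersecting family $\mathcal G$ is in particular pairwise intersecting. So whenever $n\ge 2k$, the vector-space Erd\H{o}s--Ko--Rado theorem \cite{hsieh1975intersection,frankl1986erdos} gives $|\mathcal G|\le\qbinom{n-1}{k-1}_q$. Its equality characterization then finishes the case $n\ge 2k$:
\begin{itemize}
\item for $n\ge 2k+1$ the only extremal families are stars $\{G:L\le G\}$;
\item for $n=2k$ there are also the families $\qbinom{H}{k}$ with $H$ a hyperplane.
\end{itemize}
When $r\ge 3$ and $n=2k$, a family $\qbinom{H}{k}$ is not $3$-wise intersecting: three $k$-subspaces of the $(2k-1)$-space $H$ can have trivial common intersection. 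This explains the exception $r=2$, $n=2k$. Note also that for $r=2$ the hypothesis $(r-1)n\ge rk$ is exactly $n\ge 2k$.

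\textbf{The range $rk/(r-1)\le n<2k$ with $r\ge 3$.} Here pairwise intersection is automatic, so the $r$-wise condition must be used in full. I would pass to orthogonal complements. Set $j:=n-k<k$ and $\mathcal D:=\{G^\perp:G\in\mathcal G\}\subseteq\qbinom{V}{j}$. The $r$-wise intersecting property becomes
\begin{equation*}
\dim(D_1+\cdots+D_r)\le n-1\qquad\text{for all }D_1,\ldots,D_r\in\mathcal D,
\end{equation*}
and the hypothesis $(r-1)n\ge rk$ is equivalent to $rj\ge n$. The target bound becomes $|\mathcal D|\le\qbinom{n-1}{j}_q$, with equality only for $\qbinom{H}{j}$, $H$ a hyperplane; this is the dual of a star.

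I would argue by induction on $r$ via a shadow argument. For a maximal family, consider the family $\mathcal S$ of sums of $r-1$ members of $\mathcal D$. Each member of $\mathcal S$ together with any $D\in\mathcal D$ fails to span $V$. Lower bound $|\mathcal S|$, or the size of the shadow of $\mathcal D$ at the appropriate dimension, using the $q$-Kruskal--Katona theorem. Then combine this with the cross-condition between $\mathcal S$ and $\mathcal D$, counting over hyperplanes containing each sum, to obtain $|\mathcal D|\le\qbinom{n-1}{j}_q$. The condition $rj\ge n$ is exactly what is needed for the shadow inequality to be applied at the right level.

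\textbf{Main obstacle and uniqueness.} The hard step is this shadow step for $n<2k$: getting the exact bound together with the equality case. Equality in the $q$-Kruskal--Katona bound should force $\mathcal D$ to be ``all $j$-subspaces of a hyperplane,'' that is, $\mathcal G$ to be a star. Uniqueness therefore hinges on the uniqueness of extremal families in the vector-space shadow theorem. If that route stalls, the fallback is to use the result only in the form actually needed later in the paper, namely $n=(s+1)k$ and $r=s+1$. There one can mimic the averaging argument of Proposition~\ref{thm:k|n}: sample a Desarguesian spread of $k$-subspaces and show that equality forces every spread to meet $\mathcal G$ in a star-like configuration.
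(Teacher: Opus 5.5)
\textbf{There is a genuine gap, and it sits exactly where the lemma has content.} Note that the paper does not prove this lemma; it imports it as a special case of Theorem~1.5 of \cite{chowdhury2010shadows}.

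Your first step is correct. With repetitions allowed, an $r$-wise intersecting family is intersecting, so for $n\ge 2k$ everything follows from the vector-space Erd\H{o}s--Ko--Rado theorem. Your observation that $\qbinom{H}{k}$ is not $3$-wise intersecting correctly explains the exception. For $n=2k$ you do need the classification of maximum intersecting families as stars or $\qbinom{H}{k}$, a result of Godsil--Newman and Tanaka that goes beyond Theorem~\ref{thm:ekr for vs} as quoted.

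But this part is never used. The paper applies the lemma only to $\mathcal F^\perp\subseteq\qbinom{V}{sk}$ with $n=(s+1)k$ and $r=s+1\ge 3$, so $n<2sk$. In exactly that situation your dual family is $\mathcal D=(\mathcal F^\perp)^\perp=\mathcal F$ with $j=k$. So the range you defer is literally the equality case of Theorem~\ref{thm:vemc}~(ii) itself: $k$-subspaces of $\mathbb F_q^{(s+1)k}$, no $s+1$ of which span. Nothing has been reduced.

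For $rk/(r-1)\le n<2k$ you give a plan rather than an argument, and it does not go through as written.
\begin{itemize}
\item Sums of $r-1$ members of $\mathcal D$ have varying dimensions, so $q$-Kruskal--Katona does not apply to $\mathcal S$.
\item The natural uniform object is the family $\mathcal T$ of $(n-j)$-subspaces contained in such a sum; this is where $rj\ge n$ enters. But bounding $|\mathcal T|$ from below in terms of $|\mathcal D|$ is a sumset-type problem, not a shadow inequality.
\item The condition $D\cap T\neq\{0\}$, with $\dim D+\dim T=n$, says that $\mathcal D$ avoids every complement of every member of $\mathcal T$. Converting this into a bound on $|\mathcal D|$ needs a cross-intersection theorem.
\item No inductive statement in $r$ is formulated, and uniqueness is only hoped to follow from equality in $q$-Kruskal--Katona.
\end{itemize}

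The fallback does not close the gap either. Spread averaging must be run on $\mathcal G^\perp$, since a spread of $k$-subspaces contains no $sk$-subspaces, and then it only reproves Proposition~\ref{thm:k|n}. Its equality case only says that every Desarguesian spread meets $\mathcal F$ in the spread elements of some $\mathbb F_{q^k}$-hyperplane. Passing from this local information to a single global $\mathbb F_q$-hyperplane is precisely the missing step.

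A route that does work in your dual setting is induction on $r$ through quotients.
\begin{itemize}
\item For $D_1\in\mathcal D$, the members meeting $D_1$ trivially project to $j$-subspaces of $V/D_1$, no $r-1$ of which span. Each image has exactly $q^{j^2}$ preimages meeting $D_1$ trivially. So by induction $D_1$ has at most $q^{j^2}\qbinom{n-j-1}{j}_q$ such partners.
\item When $n\le 2j$, the dual of Erd\H{o}s--Ko--Rado applies directly.
\item When $n\ge 2j+1$, use the Hoffman (expander-mixing) inequality in $\mathrm{KN}_q(n,j)$, whose least eigenvalue is $-q^{j(j-1)}\qbinom{n-j-1}{j-1}_q$. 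It shows that a family of size $\qbinom{n-1}{j}_q$ has on average at least that many partners per member, and strictly more if the family is larger. This reduces to the identity $[n-j]_q^2-q^{n-2j}[j]_q^2=[n-2j]_q[n]_q$, where $[a]_q:=\qbinom{a}{1}_q$.
\end{itemize}
This gives the bound. In the equality case it forces every quotient family to be extremal, and forces $\mathbf{1}_{\mathcal D}$ to lie in the span of the all-ones vector and the eigenspace of the least eigenvalue. That is where a genuine uniqueness argument has to begin. Keep in mind that the $n=2k$ exception of Erd\H{o}s--Ko--Rado can reappear at the bottom of the induction.
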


\begin{proof}[Proof of \cref{thm:vemc} (ii), equality case]
Assume that $n=(s+1)k$, $\nu_q(\mathcal{F})\le s$, and $|\mathcal{F}|=\qbinom{(s+1)k-1}{k}_q$. We show that $\mathcal{F}$ is isomorphic to $\mathcal{A}_q(k,s)$, unless $s=1$ and $n=2k$. Since the statement is invariant under linear isomorphisms, we may assume that $V=\mathbb{F}_q^n$, equipped with the standard dot product $\langle x,y\rangle=\sum_{i=1}^n x_i y_i$.

For each subspace $F\le V$, let $F^\perp:=\{y\in V:\langle x,y\rangle=0\text{ for all }x\in F\}$, and set $\mathcal{F}^\perp:=\{F^\perp:F\in\mathcal{F}\}\subseteq\qbinom{V}{sk}$. Then $|\mathcal{F}^\perp|=|\mathcal{F}|=\qbinom{n-1}{k}_q=\qbinom{n-1}{sk-1}_q$.

We claim that $\mathcal{F}^\perp$ is $(s+1)$-wise intersecting. Indeed, if $F_1,\ldots,F_{s+1}\in\mathcal{F}$, then they do not form a direct sum, since $\nu_q(\mathcal{F})\le s$. As $\dim V=(s+1)k$, this is equivalent to saying that $F_1+\cdots+F_{s+1}\ne V$, and hence $(F_1+\cdots+F_{s+1})^\perp\ne\{0\}$. Since $(F_1+\cdots+F_{s+1})^\perp=F_1^\perp\cap\cdots\cap F_{s+1}^\perp$, the claim follows.

Now apply Lemma~\ref{lem:extremal-family} to $\mathcal{F}^\perp$ with $r=s+1$ and with $sk$ in place of $k$. Since $(r-1)n=s(s+1)k=(s+1)sk$, the lemma applies, and equality holds because $|\mathcal{F}^\perp|=\qbinom{n-1}{sk-1}_q$. Therefore, unless $r=2$ and $n=2sk$, there exists a fixed $1$-subspace $L\le V$ such that $\mathcal{F}^\perp=\{G\in\qbinom{V}{sk}:L\le G\}$. Since $r=s+1$, the exceptional case is exactly $s=1$ and $n=2k$.

It follows that $\mathcal{F}=\{F\in\qbinom{V}{k}:L\le F^\perp\}=\{F\in\qbinom{V}{k}:F\le L^\perp\}$. Since $L^\perp$ is an $((s+1)k-1)$-subspace of $V$, this shows that $\mathcal{F}$ is precisely the family of all $k$-subspaces contained in a fixed $((s+1)k-1)$-subspace. Hence $\mathcal{F}$ is isomorphic to $\mathcal{A}_q(k,s)$.
\end{proof}

\subsection{The general case}

The Hoffman bound \cite{hoffman2003eigenvalues} is a fundamental tool in spectral graph theory. It gives an upper bound on the independence number of a regular graph in terms of its eigenvalues: if $G$ is an $n$-vertex $d$-regular graph with eigenvalues $d=\lambda_1\ge \cdots \ge \lambda_n$, then $\alpha(G)\le \frac{-\lambda_n}{d-\lambda_n}\,n$. 

To prove the universal upper bound in \cref{thm:vemc} (ii), we will use a high-dimensional analogue of the Hoffman bound, also known as the {\it link bound}, due to Filmus, Golubev, and Lifshitz \cite{filmus2021high}. This bound applies to hypergraphs and yields an upper bound on their independence number.

We begin by introducing the necessary definitions.

\begin{definition}\label{def:hoff}
Let $\mathcal H$ be an $r$-uniform hypergraph with vertex set $V(\mathcal H)$ and edge set $E(\mathcal H)$.
\begin{enumerate}
    \item A set $I\subseteq V(\mathcal H)$ is called an independent set of $\mathcal H$ if ${I\choose r}\cap E(\mathcal H)=\varnothing$. The independence number of $\mathcal H$, denoted by $\alpha(\mathcal H)$, is the maximum size of an independent set in $\mathcal H$.

    \item For each $0\le i\le r$, let
    \begin{equation*}
    \mathcal H^{(i)}:=\left\{\sigma\in {V(\mathcal H)\choose i}: \sigma\subseteq e \text{ for some } e\in E(\mathcal H)\right\}.
    \end{equation*}

    \item For $\sigma\in \mathcal H^{(i)}$ with $0\le i\le r-1$, the link of $\sigma$ in $\mathcal H$ is the $(r-i)$-uniform hypergraph $\mathcal H_\sigma$ with vertex set
    \begin{equation*}
    V(\mathcal H_\sigma):=\left\{v\in V(\mathcal H)\setminus \sigma: \sigma\cup\{v\}\subseteq e \text{ for some } e\in E(\mathcal H)\right\}
    \end{equation*}
    and edge set
    \begin{equation*}
    E(\mathcal H_\sigma):=\left\{\tau\in {V(\mathcal H)\setminus \sigma\choose r-i}: \sigma\cup \tau\in E(\mathcal H)\right\}.
    \end{equation*}

    \item The skeleton graph of $\mathcal H$, denoted by $S(\mathcal H)$, is the simple graph on $V(\mathcal H)$ in which two distinct vertices $x,y\in V(\mathcal H)$ are adjacent if and only if there exists an edge $e\in E(\mathcal H)$ such that $\{x,y\}\subseteq e$.

    \item Let $G$ be a finite simple graph. The normalized adjacency matrix of $G$ is the matrix $T_G=(T_G(x,y))_{x,y\in V(G)}$ defined by
    \begin{equation*}
    T_G(x,y)=
    \begin{cases}
    1/d_G(y), & \text{if } \{x,y\}\in E(G),\\
    0, & \text{otherwise},
    \end{cases}
    \end{equation*}
    where $d_G(y)$ denotes the degree of $y$. We write $\lambda_{\min}(T_G)$ for the smallest eigenvalue of $T_G$.

    \item For each $0\le i\le r-2$, define
    \begin{equation*}
    \lambda_i(\mathcal H):=\min_{\sigma\in \mathcal H^{(i)}}\lambda_{\min}\bigl(T_{S(\mathcal H_\sigma)}\bigr).
    \end{equation*}
    In particular, $\lambda_0(\mathcal H)=\lambda_{\min}\bigl(T_{S(\mathcal H)}\bigr)$.
\end{enumerate}
\end{definition}
 
Below we present the link bound.

\begin{lemma}[{\cite[Theorem 1.4]{filmus2021high}}]\label{lem:link}
Let $\mathcal H$ be an $r$-uniform hypergraph. Suppose that $\mathcal H$ is regular, that is, every vertex of $\mathcal H$ is contained in the same positive number of edges. Suppose that for every $0\le i\le r-2$ and every $\sigma\in \mathcal H^{(i)}$, the skeleton graph $S(\mathcal H_\sigma)$ is regular, and the number
$|\{e\in E(\mathcal H): \sigma\cup\{x,y\}\subseteq e\}|$
is independent of the edge $\{x,y\}\in E(S(\mathcal H_\sigma))$. Then
\begin{equation*}
\alpha(\mathcal H)\le |V(\mathcal H)|
\left(
1-\frac{1}{(1-\lambda_0(\mathcal H))(1-\lambda_1(\mathcal H))\cdots(1-\lambda_{r-2}(\mathcal H))}
\right).
\end{equation*}
\end{lemma}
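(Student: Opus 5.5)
We prove the bound by induction on the uniformity $r$. The idea is to peel off one vertex at a time: independence of $I$ in $\mathcal H$ forces independence of $I$ inside every vertex link. This caps the fraction of skeleton-neighbours of a vertex of $I$ that can lie in $I$. A Hoffman-type spectral inequality on the skeleton graph $S(\mathcal H)$ then turns this local cap into a global bound.

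\emph{Base case $r=2$.} Here $\mathcal H$ is a graph, $S(\mathcal H)=\mathcal H$, and the claimed bound reads $\alpha\le |V|\bigl(1-\frac{1}{1-\lambda_0}\bigr)=|V|\frac{-\lambda_0}{1-\lambda_0}$. Since $\mathcal H$ is regular, $T_{S(\mathcal H)}$ is symmetric, so this is exactly Hoffman's bound with normalized eigenvalues.

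\emph{Inductive step.} Let $I$ be independent in $\mathcal H$ and write $\mu:=|I|/|V(\mathcal H)|$.

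1. Fix $v\in I$. Any edge $\tau$ of the link $\mathcal H_v$ with $\tau\subseteq I$ would give an edge $\{v\}\cup\tau\subseteq I$ of $\mathcal H$. Hence $I\cap V(\mathcal H_v)$ is independent in the $(r-1)$-uniform hypergraph $\mathcal H_v$.

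2. We check that $\mathcal H_v$ satisfies the hypotheses of the lemma.
 \begin{itemize}
 \item Links of $\mathcal H_v$ are links of $\mathcal H$: $(\mathcal H_v)_\tau=\mathcal H_{\{v\}\cup\tau}$.
 \item Regularity of $\mathcal H_v$ follows because $S(\mathcal H_v)$ is regular and the number of edges through a skeleton edge $\{v,x\}$ is constant.
 \item Consequently $\lambda_i(\mathcal H_v)\ge\lambda_{i+1}(\mathcal H)$ for all $i$.
 \end{itemize}
 Applying the induction hypothesis to $\mathcal H_v$ gives
 \[
 |I\cap V(\mathcal H_v)|\le \beta\,|V(\mathcal H_v)|,\qquad \beta:=1-\frac{1}{\prod_{i=1}^{r-2}(1-\lambda_i(\mathcal H))}.
 \]

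3. Note that $V(\mathcal H_v)$ is exactly the neighbourhood of $v$ in $S(\mathcal H)$. Step 2 therefore says that every vertex of $I$ has at most a $\beta$-fraction of its $S(\mathcal H)$-neighbours inside $I$.

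4. Work in $L^2$ with the uniform measure on $V(\mathcal H)$, which is stationary because $S(\mathcal H)$ is regular. Step 3 gives
 \[
 \langle 1_I,T1_I\rangle\le \mu\beta .
 \]
 For a lower bound, write $1_I=\mu\mathbf 1+f$ with $f\perp\mathbf 1$. Then
 \[
 \langle 1_I,T1_I\rangle=\mu^2+\langle f,Tf\rangle\ge \mu^2+\lambda_0\|f\|^2=\mu^2+\lambda_0(\mu-\mu^2),
 \]
 using $\|f\|^2=\mu-\mu^2$.

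5. Combining the two bounds and dividing by $\mu$ gives $\beta\ge \mu(1-\lambda_0)+\lambda_0$. Hence
 \[
 \mu\le \frac{\beta-\lambda_0}{1-\lambda_0}=1-\frac{1-\beta}{1-\lambda_0}=1-\frac{1}{\prod_{i=0}^{r-2}(1-\lambda_i(\mathcal H))},
 \]
 which is the claimed bound.

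\emph{Main obstacle.} The delicate point is the bookkeeping in Steps 2 and 4: making sure the measures line up. We need the uniform measure on $V(\mathcal H_v)$ used in the induction to agree with the transition probabilities of $T_{S(\mathcal H)}$. We also need the regularity and constant-codegree hypotheses to pass to every link, so that the induction hypothesis genuinely applies to $\mathcal H_v$.

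If the constant-codegree assumption turns out to be needed with edge-weighted measures instead, the fallback is to rerun the same argument with the natural edge-induced measures on each link. In that version the hypothesis is precisely what makes these measures uniform, so Steps 2–5 go through unchanged.
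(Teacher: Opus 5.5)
Your proof is correct. The paper gives no proof of its own; it simply cites Theorem~1.4 of Filmus--Golubev--Lifshitz, whose argument is essentially the one you give: induction on $r$, combining the Hoffman-type inequality on the skeleton $S(\mathcal H)$ with the inductive bound $|I\cap V(\mathcal H_v)|\le \beta\,|V(\mathcal H_v)|$ in each vertex link. Under the stated regularity and constant-codegree hypotheses, their top-face-weighted measures coincide with the uniform measures you use, so your Steps~2--5 go through as written and the fallback is not needed.
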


We will also need the spectrum of the $q$-Kneser graph. Let $q$ be a prime power, and let $n$ and $k$ be positive integers such that $n\ge 2k$. Let $V$ be an $n$-dimensional vector space over $\mathbb{F}_q$. The {\it $q$-Kneser graph} $\mathrm{KN}_q(n,k)$ is defined to be the graph whose vertices are the elements of $\qbinom{V}{k}$, with two vertices adjacent if and only if their intersection is $\{0\}$.

\begin{lemma}[{\cite[Theorem 10]{godsil2015erdos}}; {\cite[9.7.1 Corollary]{delsarte1976association}}]\label{lem:qKN}
The eigenvalues of the $q$-Kneser graph $\mathrm{KN}_q(n, k)$ are
\begin{equation*}
    \mu_j(n,k)=(-1)^jq^{\binom{j}{2}+k(k-j)}\qbinom{n-k-j}{k-j}_q, 
\end{equation*}
where $j = 0,\ldots, k$, with multiplicity $\qbinom{n}{j}_q-\qbinom{n}{j-1}_q$. 
\end{lemma}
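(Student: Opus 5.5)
The plan is to realize $\mathrm{KN}_q(n,k)$ inside the Grassmann association scheme and to compute its eigenvalues on an explicit eigenspace decomposition. This decomposition comes from lifting functions on $j$-subspaces to functions on $k$-subspaces.

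For $0\le j\le k$, define the lifting map $L_j:\mathbb{R}^{\qbinom{V}{j}}\to\mathbb{R}^{\qbinom{V}{k}}$ by $(L_jf)(F)=\sum_{U\le F,\ \dim U=j}f(U)$. Call $f$ \emph{harmonic} if $\sum_{U\ge W,\ \dim U=j}f(U)=0$ for every $(j-1)$-subspace $W$; this is the kernel of the down-map to $(j-1)$-subspaces.

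\begin{itemize}
\item By Kantor's theorem, the inclusion matrix between $j$-subspaces and $k$-subspaces has full rank when $j\le k\le n-j$, which holds since $n\ge 2k$. Hence $L_j$ is injective.
\item The harmonic space $H_j$ has dimension $\qbinom{n}{j}_q-\qbinom{n}{j-1}_q$, again because the down-map is surjective by Kantor's theorem.
\item The spaces $L_j(H_j)$ are pairwise orthogonal and sum to all of $\mathbb{R}^{\qbinom{V}{k}}$. This is the standard decomposition into $\operatorname{GL}(V)$-irreducibles.
\end{itemize}

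It therefore suffices to show that $A\,L_jf=\mu_j(n,k)\,L_jf$ for every harmonic $f$, where $A$ is the adjacency matrix of $\mathrm{KN}_q(n,k)$. The multiplicities then follow from $\dim H_j$.

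The first step is a direct count. We have
\[
(A L_jf)(F)=\sum_{G\cap F=\{0\}}\sum_{U\le G}f(U)=\sum_{U}f(U)\cdot\#\{G\in\qbinom{V}{k}:U\le G,\ G\cap F=\{0\}\}.
\]
The count vanishes unless $U\cap F=\{0\}$. If $U\cap F=\{0\}$, pass to $V/U$, where $(F+U)/U$ is a $k$-subspace. Lemma~\ref{lem:counting} then gives the count $q^{k(k-j)}\qbinom{n-k-j}{k-j}_q$, which is independent of $U$. Thus
\[
(AL_jf)(F)=q^{k(k-j)}\qbinom{n-k-j}{k-j}_q\,S_0(F),\qquad\text{where } S_i(F):=\sum_{\dim(U\cap F)=i}f(U).
\]

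The main step, and the expected obstacle, is the identity $S_0(F)=(-1)^jq^{\binom{j}{2}}S_j(F)$ for harmonic $f$. Note that $S_j(F)=(L_jf)(F)$, so this identity finishes the proof. To prove it, fix $F$ and, for each subspace $W\le F$ of dimension $i\le j$, sum the harmonicity relations to get
\[
\sum_{U\ge W}f(U)=0\quad\text{for } i\ge 1.
\]
This follows because summing the defining relation over all $(j-1)$-spaces containing $W$ inside a fixed larger space gives a multiple of $\sum_{U\ge W}f(U)$. Summing $\sum_{U\ge W}f(U)=0$ over all $i$-subspaces $W\le F$ yields, for each $1\le i\le j$, the linear relation
\[
\sum_{t\ge i}\qbinom{t}{i}_q S_t(F)=0.
\]
Together with the trivial case $i=0$, this is a triangular system in $S_0,\dots,S_j$. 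Solving it by $q$-binomial (Gaussian) inversion expresses every $S_t$ as a multiple of $S_j$; in particular
\[
S_0=\sum_{t}(-1)^t q^{\binom{t}{2}}\qbinom{j}{t}_q\,S_j\cdot[\text{coefficient}],
\]
and the coefficient collapses to $(-1)^jq^{\binom{j}{2}}$. I expect the care here to lie in getting the inversion signs and the $q$-powers right. As a sanity check I would verify the result for $j=1$, where it says $S_0=-S_1$; this holds because $\sum_U f(U)=0$ over all $1$-subspaces $U$ when $f$ is harmonic for $j=1$.

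Multiplying the two factors gives $\mu_j(n,k)=(-1)^jq^{\binom{j}{2}+k(k-j)}\qbinom{n-k-j}{k-j}_q$, with multiplicity $\dim H_j=\qbinom{n}{j}_q-\qbinom{n}{j-1}_q$. As a check, $j=0$ recovers the degree $q^{k^2}\qbinom{n-k}{k}_q$, in agreement with Lemma~\ref{lem:counting}.
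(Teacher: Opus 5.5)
The paper does not prove this lemma; it quotes the spectrum from Godsil--Meagher and Delsarte. Your proposal is a genuinely different, self-contained route. You lift harmonic functions and evaluate $AL_jf$ by Lemma~\ref{lem:counting} in $V/U$; your count $q^{k(k-j)}\qbinom{n-k-j}{k-j}_q$ is correct. You then reduce everything to the identity $S_0(F)=(-1)^jq^{\binom{j}{2}}S_j(F)$. This costs Kantor's rank theorem and a Gaussian inversion, but it makes the lemma independent of association-scheme machinery; the citation buys brevity. The approach works, but the key step as written contains an error that must be fixed.

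\textbf{The off-by-one error.} The vanishing $\sum_{U\ge W,\ \dim U=j}f(U)=0$ holds only for $\dim W=i\le j-1$. Summing the harmonicity relations over the $(j-1)$-spaces $W'\supseteq W$ counts each $U\ge W$ exactly $\qbinom{j-i}{1}_q$ times, and this is meaningful only when $i<j$.
\begin{itemize}
\item For $i=j$ the sum is just $f(W)$. Your relation would then read $S_j(F)=0$, i.e.\ $L_jf=0$, contradicting injectivity.
\item Indeed, a system with $i=0,\dots,j$ is triangular with unit diagonal and forces every $S_t=0$.
\end{itemize}
The correct system is $T_i:=\sum_{t\ge i}\qbinom{t}{i}_qS_t(F)=0$ for $0\le i\le j-1$, with $T_j=S_j(F)$ free. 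The case $i=0$, namely $\sum_Uf(U)=0$, is not trivial; it comes from the same summation with $W=\{0\}$. Gaussian inversion, $S_t=\sum_{i\ge t}(-1)^{i-t}q^{\binom{i-t}{2}}\qbinom{i}{t}_qT_i$, evaluated at $t=0$, then gives $S_0(F)=(-1)^jq^{\binom{j}{2}}S_j(F)$ immediately. This should replace your garbled ``coefficient'' display.

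\textbf{The decomposition.} You need not appeal to the $\mathrm{GL}(V)$-irreducible decomposition. Once $A$ acts on $L_j(H_j)$ by $\mu_j$, note that the $\mu_j$ alternate in sign and
$$|\mu_j|/|\mu_{j+1}|=q^{k-j}\qbinom{n-k-j}{1}_q\Big/\qbinom{k-j}{1}_q>1,$$
so they are pairwise distinct. Hence the spaces $L_j(H_j)$ are mutually orthogonal eigenspaces of the symmetric matrix $A$. Since $\sum_{j}\dim H_j=\qbinom{n}{k}_q$ by telescoping, they exhaust $\mathbb{R}^{\qbinom{V}{k}}$. Distinctness is also what makes the stated multiplicities exact.
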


Let $\otimes$ denote the standard {\it tensor product} of matrices. The following two facts are not hard to verify by definition, so we omit their proofs.

\begin{fact}
Let $A$ be an $n\times n$ matrix with eigenvalues $\lambda_1,\dots,\lambda_n$, and let $B$ be an $m\times m$ matrix with eigenvalues $\mu_1,\dots,\mu_m$. Then the eigenvalues of $A\otimes B$ are precisely $\lambda_i\mu_j$, where $i\in[n]$ and $j\in[m]$.
\end{fact}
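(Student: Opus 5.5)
The plan is to verify the fact directly from the definition of the Kronecker product, using triangularization so that diagonalizability is not needed. By Schur's theorem over $\mathbb{C}$, choose invertible matrices $P$ and $Q$ with $P^{-1}AP=S$ and $Q^{-1}BQ=R$, where $S$ and $R$ are upper triangular. The diagonal of $S$ is $(\lambda_1,\dots,\lambda_n)$ and the diagonal of $R$ is $(\mu_1,\dots,\mu_m)$, both listed with multiplicity.

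The first key step is the mixed-product rule $(X\otimes Y)(Z\otimes W)=(XZ)\otimes(YW)$. It follows by comparing blocks: the $(i,j)$ block of the left side is $\sum_\ell X_{i\ell}Z_{\ell j}\,YW$. As a consequence, $(P\otimes Q)^{-1}=P^{-1}\otimes Q^{-1}$, and
\begin{equation*}
(P\otimes Q)^{-1}(A\otimes B)(P\otimes Q)=(P^{-1}AP)\otimes(Q^{-1}BQ)=S\otimes R.
\end{equation*}
Hence $A\otimes B$ is similar to $S\otimes R$.

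Next, observe that $S\otimes R$ is upper triangular. Its $(i,j)$ block is $S_{ij}R$, which vanishes for $i>j$. Each diagonal block $S_{ii}R$ is upper triangular with diagonal entries $\lambda_i\mu_1,\dots,\lambda_i\mu_m$. The characteristic polynomial of $A\otimes B$ is therefore $\prod_{i,j}(x-\lambda_i\mu_j)$, so the eigenvalues are exactly $\lambda_i\mu_j$ with multiplicity.

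There is no real obstacle; the only point needing care is that the statement counts multiplicities, which the triangularization handles. In the application, the matrices are normalized adjacency matrices of regular graphs and hence symmetric up to scaling. One could then simply use an eigenvector basis, since $(A\otimes B)(u_i\otimes v_j)=\lambda_i\mu_j\,u_i\otimes v_j$ and the $u_i\otimes v_j$ are linearly independent. Independence holds because $P\otimes Q$ is invertible.
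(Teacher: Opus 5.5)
Your proof is correct. The paper does not prove this fact: it only says it is ``not hard to verify by definition'' and omits the argument, so there is no competing route to compare against.

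The verification the authors presumably have in mind is the eigenvector computation $(A\otimes B)(u\otimes v)=\lambda\mu\,(u\otimes v)$. That shows each $\lambda_i\mu_j$ is an eigenvalue, but it recovers the full multiset with multiplicities only when $A$ and $B$ are diagonalizable. Your argument combines Schur triangularization, the mixed-product rule, and the observation that $S\otimes R$ is upper triangular with diagonal entries $\lambda_i\mu_j$. This removes the diagonalizability hypothesis and proves the statement as written for arbitrary square matrices.

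As you note, the diagonalizable version already suffices for the paper's only use of the fact, in Fact~\ref{fact:eigenvalues}. There both $T_G$ (which is similar to a symmetric matrix) and $\frac{1}{t}J_t$ are diagonalizable, so either argument serves the application.
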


Let $G[t]$ be the {\it $t$-fold blow-up} of $G$, that is, the graph obtained from $G$ by replacing each vertex $v\in V(G)$ with an independent set of size $t$, and replacing each edge of $G$ with a complete bipartite graph between the corresponding independent sets.

\begin{fact}\label{fact:eigenvalues}
Let $G$ be a finite graph without isolated vertices. Then $T_{G[t]}=T_G\otimes (\frac{1}{t}J_t)$, where $J_t$ is the $t\times t$ all-ones matrix. Since $\frac{1}{t}J_t$ has eigenvalue $1$ with multiplicity $1$ and eigenvalue $0$ with multiplicity $t-1$, the eigenvalues of $T_{G[t]}$ are precisely the eigenvalues of $T_G$, together with $(t-1)|V(G)|$ additional zero eigenvalues.
\end{fact}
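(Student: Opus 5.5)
The statement is Fact~\ref{fact:eigenvalues}, and I would prove it directly from the definitions in two steps.

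\textbf{Step 1: the tensor-product identity.} Index the vertices of $G[t]$ by pairs $(v,a)$ with $v\in V(G)$ and $a\in[t]$. By the definition of the blow-up, $(v,a)$ and $(w,b)$ are adjacent if and only if $\{v,w\}\in E(G)$. No edges join copies of the same vertex, since $G$ is simple and each vertex is replaced by an independent set. Hence the degree of $(w,b)$ in $G[t]$ is $t\,d_G(w)$. This degree is positive because $G$ has no isolated vertices, so $T_{G[t]}$ is well defined. For $\{v,w\}\in E(G)$ we therefore get
\[
T_{G[t]}\bigl((v,a),(w,b)\bigr)=\frac{1}{t\,d_G(w)}=T_G(v,w)\cdot\frac{1}{t}J_t(a,b),
\]
and both sides vanish when $\{v,w\}\notin E(G)$. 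This is exactly the entrywise description of the Kronecker product $T_G\otimes(\tfrac1t J_t)$, with respect to the lexicographic ordering of the pairs $(v,a)$.

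\textbf{Step 2: the eigenvalues.} The matrix $\frac1t J_t$ is a rank-one projection, because $J_t^2=tJ_t$. So its eigenvalues are $1$, with the all-ones eigenvector, and $0$ with multiplicity $t-1$. Let $\lambda_1,\dots,\lambda_{|V(G)|}$ be the eigenvalues of $T_G$, counted with algebraic multiplicity. The preceding fact on tensor products then says that the eigenvalues of $T_{G[t]}$ are the products $\lambda_i\cdot 1$ and $\lambda_i\cdot 0$. That gives each $\lambda_i$ once, together with $(t-1)|V(G)|$ zeros, as claimed.

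\textbf{Points to check.} The only point needing care is that $T_G$ need not be symmetric, so one should not rely on an orthogonal eigenbasis. The tensor-product fact holds for algebraic multiplicities in general. To see this, triangularize $A=PR_AP^{-1}$ and $B=QR_BQ^{-1}$ over $\mathbb{C}$. Then $A\otimes B=(P\otimes Q)(R_A\otimes R_B)(P\otimes Q)^{-1}$, and $R_A\otimes R_B$ is upper triangular with diagonal entries $\lambda_i\mu_j$. Alternatively, $T_G=AD^{-1}$ is similar to the symmetric matrix $D^{-1/2}AD^{-1/2}$, so it is diagonalizable with real eigenvalues, and the eigenvectors $x_i\otimes y_j$ give the conclusion directly. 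There is no real obstacle beyond this bookkeeping of indices.
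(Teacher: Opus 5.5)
Your proposal is correct and follows the same route the paper indicates; the paper omits the proof as a direct verification from the definitions. You check $T_{G[t]}=T_G\otimes(\frac1t J_t)$ entrywise using $d_{G[t]}((w,b))=t\,d_G(w)$, then read off the spectrum from the preceding tensor-product fact and the spectrum of $\frac1t J_t$, and your care about the non-symmetry of $T_G$ (via triangularization, or the similarity of $T_G$ to $D^{-1/2}AD^{-1/2}$) makes explicit a point the paper leaves implicit.
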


\begin{proof}[Proof of Theorem~\ref{thm:vemc} {\rm (ii)}]
Let $\mathcal H$ be the $(s+1)$-uniform hypergraph defined on the vertex set $\qbinom{V}{k}$, where $V_1,\dots,V_{s+1}\in\qbinom{V}{k}$ form an edge if and only if their sum is direct. Then a family $\mathcal F\subseteq\qbinom{V}{k}$ satisfies $\nu_q(\mathcal F)\le s$ if and only if it is an independent set in $\mathcal H$, so $|\mathcal F|\le \alpha(\mathcal H)$. Moreover, $\mathcal H$ is regular, since $\mathrm{GL}(V)$ acts transitively on $\qbinom{V}{k}$ and preserves edges.

Fix $i\in\{0,1,\dots,s-1\}$ and let $\sigma=\{F_1,\dots,F_i\}\in \mathcal H^{(i)}$. Then $F_1,\dots,F_i$ form a direct sum. Put $U:=F_1+\cdots+F_i$, so $\dim U=ik$. The vertex set of $\mathcal H_\sigma$ consists precisely of those $A\in\qbinom{V}{k}$ with $A\cap U=\{0\}$, equivalently, $V(\mathcal H_\sigma)=\{A\in \qbinom{V}{k}:\dim(A+U)=(i+1)k\}$.

Let $V=U\oplus W$ be a direct sum, where $\dim W=n-ik$, and let $\pi:V\to W$ be the canonical projection with kernel $U$. Then $\pi$ sends $V(\mathcal H_\sigma)$ onto $\qbinom{W}{k}$, and for every $E\in\qbinom{W}{k}$, 
\begin{equation*}
V(\mathcal H_\sigma)\cap \pi^{-1}(E)
=
\{A\in V(\mathcal H_\sigma):\pi(A)=E\}
\end{equation*}
has size $q^{ik^2}$ by Lemma~\ref{lem:preimage count}.

Moreover, for $A,B\in V(\mathcal H_\sigma)$, the vertices $A$ and $B$ are adjacent in the skeleton graph $S(\mathcal H_\sigma)$ if and only if $\pi(A)\cap\pi(B)=\{0\}$. Indeed, $\{A,B\}\in E(S(\mathcal H_\sigma))$ if and only if $A+B+U$ is direct, which holds if and only if $\pi(A+B+U)=\pi(A+B)=\pi(A)+\pi(B)$ is direct. Therefore, $S(\mathcal H_\sigma)$ is the $q^{ik^2}$-fold blow-up of the $q$-Kneser graph $\mathrm{KN}_q(n-ik,k)$.

In particular, $S(\mathcal H_\sigma)$ is regular. Moreover, the subgroup of $\mathrm{GL}(V)$ fixing $F_1,\dots,F_i$ pointwise acts transitively on the ordered adjacent pairs of $S(\mathcal H_\sigma)$, so the number $|\{e\in E(\mathcal H):\sigma\cup\{A,B\}\subseteq e\}|$ is independent of the edge $\{A,B\}\in E(S(\mathcal H_\sigma))$. Thus the hypotheses of Lemma~\ref{lem:link} are satisfied.

We conclude that for every $i\in\{0,1,\ldots,s-1\}$ and every $\sigma\in\mathcal H^{(i)}$, the skeleton graph $S(\mathcal H_\sigma)$ is isomorphic to the $q^{ik^2}$-fold blow-up of the $q$-Kneser graph $\mathrm{KN}_q(n-ik,k)$. For brevity, write
\begin{equation*}
G_i:=\mathrm{KN}_q(n-ik,k).
\end{equation*}
By Lemma~\ref{lem:qKN}, the least eigenvalue of the adjacency matrix of $G_i$ is $\mu_1(n-ik,k)$, while the degree of $G_i$ is $\mu_0(n-ik,k)$. Hence the least eigenvalue of the normalized adjacency matrix $T_{G_i}$ is
\begin{equation*}
\lambda_{\min}(T_{G_i})
=
\frac{\mu_1(n-ik,k)}{\mu_0(n-ik,k)}
=
-\frac{q^{-k}\qbinom{k}{1}_q}{\qbinom{n-(i+1)k}{1}_q}.
\end{equation*}
Since $S(\mathcal H_\sigma)\cong G_i[q^{ik^2}]$, Fact \ref{fact:eigenvalues} implies that $T_{S(\mathcal H_\sigma)}$ has the same least eigenvalue as $T_{G_i}$. Therefore
\begin{equation*}
\lambda_i(\mathcal H)= -\frac{q^{-k}\qbinom{k}{1}_q}{\qbinom{n-(i+1)k}{1}_q}
\qquad\text{for }i=0,1,\ldots,s-1.
\end{equation*}

Now
\begin{equation*}
1-\lambda_i(\mathcal H)
=
1+\frac{q^{-k}\qbinom{k}{1}_q}{\qbinom{n-(i+1)k}{1}_q}
=
q^{-k}\frac{\qbinom{n-ik}{1}_q}{\qbinom{n-(i+1)k}{1}_q},
\end{equation*}
where the last equality follows from the identity
\begin{equation*}
\qbinom{n-ik}{1}_q=q^k\qbinom{n-(i+1)k}{1}_q+\qbinom{k}{1}_q.
\end{equation*}
Hence
\begin{equation*}
\prod_{i=0}^{s-1}(1-\lambda_i(\mathcal H))
=
q^{-ks}\frac{\qbinom{n}{1}_q}{\qbinom{n-sk}{1}_q}.
\end{equation*}
Applying Lemma~\ref{lem:link} to $\mathcal{H}$, we obtain
\begin{equation*}
\alpha(\mathcal H)
\le
\qbinom{n}{k}_q\left(1-\frac{q^{ks}\qbinom{n-sk}{1}_q}{\qbinom{n}{1}_q}\right).
\end{equation*}
Using $\qbinom{n}{1}_q=q^{ks}\qbinom{n-sk}{1}_q+\qbinom{sk}{1}_q$, this becomes
\begin{equation*}
\alpha(\mathcal H)
\le
\qbinom{n}{k}_q\cdot \frac{\qbinom{sk}{1}_q}{\qbinom{n}{1}_q}
=
\frac{\qbinom{sk}{1}_q}{\qbinom{k}{1}_q}\qbinom{n-1}{k-1}_q
=
\qbinom{s}{1}_{q^k}\qbinom{n-1}{k-1}_q.
\end{equation*}
Therefore every independent set of $\mathcal H$, and hence every family $\mathcal F\subseteq\qbinom{V}{k}$ with $\nu_q(\mathcal F)\le s$, has size at most $\qbinom{s}{1}_{q^k}\qbinom{n-1}{k-1}_q$.
\end{proof}

\section{Proof of \cref{thm:q-HM} and \cref{thm:vemc} (iii)}
\label{sec:Vector space EMC for large $n$}
\label{sec:Stability}

For a family $\mathcal{F}\subseteq \qbinom{V}{k}$ and a subspace $I\le V$, define $\mathcal{F}[I]:=\{F\in\mathcal{F}: I\le F\}$ to be the subfamily of members of $\mathcal{F}$ containing $I$.

\begin{lemma}\label{lem: matching}
Let $n,k,s$ be positive integers satisfying $n\ge (s+1)k$ and let $V$ be an $n$-dimensional vector space over $\mathbb{F}_q$. Let $\mathcal{F}\subseteq \qbinom{V}{k}$ be a family of $k$-subspaces satisfying $\nu_q (\mathcal{F})\le s$. Then the following hold.
\begin{itemize}
    \item [{\rm (i)}] There exists at least one $I\in\qbinom{V}{1}$ such that $|\mathcal{F}[I]|\ge\frac{|\mathcal{F}|}{\qbinom{sk}{1}_q}$.
    \item [{\rm (ii)}] For every $I\in \qbinom{V}{1}$, if there exist $F_1,\ldots,F_s\in\mathcal{F}$ such that $U:=F_1+\cdots+F_s$ is direct and $I\cap U=\{0\}$, then $|\mathcal{F}[I]|\le B_q(n-1,k-1,sk)$.
\end{itemize}
\end{lemma}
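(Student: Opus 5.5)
For part (i), I will take a maximal matching and average over the one-dimensional subspaces it spans. If $\mathcal F=\varnothing$ there is nothing to prove, so let $F_1,\dots,F_t\in\mathcal F$ be a maximum collection whose sum is direct. Then $t\le s$, and $U:=F_1+\cdots+F_t$ has dimension $tk\le sk$. By maximality, every $F\in\mathcal F$ satisfies $F\cap U\neq\{0\}$; otherwise $F_1,\dots,F_t,F$ would have a direct sum, since $\dim(U+F)=tk+k$. Hence every member of $\mathcal F$ contains at least one $1$-subspace $I\le U$. Counting pairs $(I,F)$ with $I\in\qbinom{U}{1}$ and $I\le F\in\mathcal F$ gives
\[
\sum_{I\in\qbinom{U}{1}}|\mathcal F[I]|\ge|\mathcal F|.
\]
Since $\qbinom{tk}{1}_q\le\qbinom{sk}{1}_q$, some $I\le U$ satisfies $|\mathcal F[I]|\ge |\mathcal F|/\qbinom{sk}{1}_q$.

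For part (ii), fix $I$ together with $F_1,\dots,F_s$ as in the hypothesis, and let $U$ be their direct sum, so $\dim U=sk$ and $I\cap U=\{0\}$. I claim that every $F\in\mathcal F[I]$ satisfies $F\cap(U+I)\neq I$, that is, $F\cap (U+I)$ strictly contains $I$. Suppose instead that $F\cap(U+I)=I$. Then
\[
\dim(U+F)=\dim U+\dim F-\dim(U\cap F).
\]
Here $U\cap F\le F\cap(U+I)=I$, and $I\cap U=\{0\}$, so $U\cap F=\{0\}$. Therefore $\dim(U+F)=sk+k$, and $F_1,\dots,F_s,F$ would have a direct sum, contradicting $\nu_q(\mathcal F)\le s$.

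It remains to count the $k$-subspaces $F$ with $I\le F$ and $F\cap(U+I)\neq I$. I will pass to the quotient $V/I$, which has dimension $n-1$. Under this correspondence, $F$ becomes a $(k-1)$-subspace $\bar F$, and the condition $F\cap(U+I)\supsetneq I$ becomes $\bar F\cap \overline{U+I}\neq\{0\}$, where $\overline{U+I}$ is an $sk$-subspace of $V/I$. By \eqref{eq:B_q(n,k,s)}, the number of such $\bar F$ is exactly
\[
\qbinom{n-1}{k-1}_q-q^{sk(k-1)}\qbinom{n-1-sk}{k-1}_q=B_q(n-1,k-1,sk).
\]
This requires $n-1\ge sk+(k-1)$, which follows from $n\ge(s+1)k$; in any case the count is valid as the number of $(k-1)$-subspaces meeting a fixed $sk$-subspace nontrivially. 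Hence $|\mathcal F[I]|\le B_q(n-1,k-1,sk)$.

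The only delicate point is the dimension bookkeeping in (ii), namely checking that $U\cap F=\{0\}$ follows from $F\cap(U+I)=I$; the rest is routine.
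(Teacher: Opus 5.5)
Your proof is correct and follows the paper's argument. Part (i) is the same averaging over the $1$-subspaces of a maximal direct sum, and part (ii) is the same complementary count of $k$-subspaces through $I$ that meet $U\oplus I$ only in $I$. The only difference is cosmetic: you do the count in the quotient $V/I$ via \eqref{eq:B_q(n,k,s)}, whereas the paper applies Lemma~\ref{lem:counting} directly with $X=U\oplus I$ and $Z=I$. You also spell out the implication $F\cap(U+I)=I\Rightarrow F\cap U=\{0\}$, which the paper uses without comment.
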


\begin{proof}
For (i), let $F_1,\ldots,F_i$ be a maximal subfamily of $\mathcal{F}$ such that $U:=F_1+\cdots+F_i$ is direct. Then $i\le s$ and $\dim U\le sk$. By the maximality of $U$, every member of $\mathcal{F}$ intersects $U$ nontrivially, so by averaging, there is at least one 1-dimensional subspace of $U$ that is contained in at least $\frac{|\mathcal{F}|}{\qbinom{sk}{1}_q}$ members of $\mathcal{F}$.  

For (ii), let $\mathcal{G}:=\{F\in\qbinom{V}{k}:I\le F,~U\cap F=\{0\}\}$. Applying Lemma~\ref{lem:counting} with $X:=U\oplus I$ and $Z:=I$ gives that $|\mathcal{G}|=q^{(k-1)sk}\qbinom{n-1-sk}{k-1}_q$. Moreover, $\mathcal{F}[I]\cap\mathcal{G}=\varnothing$, since otherwise $\nu_q(\mathcal{F})\ge s+1$, a contradiction. Therefore, 
\begin{equation*}
    |\mathcal{F}[I]|\le\qbinom{n-1}{k-1}_q-|\mathcal{G}|=\qbinom{n-1}{k-1}_q-q^{(k-1)sk}\qbinom{n-1-sk}{k-1}_q=B_q(n-1,k-1,sk), 
\end{equation*}
where the last equality follows by \eqref{eq:B_q(n,k,s)}.
\end{proof}

Our proof of Theorem~\ref{thm:q-HM} proceeds by induction on $s$. The base case $s=1$ builds on the following result.  

\begin{theorem}[{\cite[Theorem 1.4]{blokhuis2010hilton}}]\label{thm:HM}
    Suppose either $q\ge 3$ and $n\ge 2k+1$, or $q=2$ and $n\ge 2k+2$. For any family $\mathcal{F}\subseteq\qbinom{V}{k}$ satisfying $\nu_q(\mathcal{F})\le 1$ and not contained in any family isomorphic to $\mathcal{B}_q(n,k,1)$, we have $|\f|\le H_q(n,k,1)$. Equality holds if and only if $\f$ is isomorphic to either $\mathcal H_q(n,k,1)$ or $H^\prime_q(n,3,1)$ (when $k=3$).
\end{theorem}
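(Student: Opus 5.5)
Since $\nu_q(\mathcal F)\le 1$ means exactly that $\mathcal F$ is intersecting (any two members meet nontrivially), I would follow the classical Hilton--Milner strategy via covering numbers. Let $\tau(\mathcal F)$ be the smallest dimension of a subspace $T\le V$ with $T\cap F\neq\{0\}$ for all $F\in\mathcal F$. The hypothesis that $\mathcal F$ is not contained in a copy of $\mathcal B_q(n,k,1)$ says precisely that $\tau(\mathcal F)\ge 2$. Trivially $\tau(\mathcal F)\le k$, since any member is a cover. The proof splits according to $\tau=2$ versus $\tau\ge 3$.

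\emph{Case $\tau\ge 3$.} Here I would use the standard branching count. Fix $E\in\mathcal F$. Every member meets $E$, so it contains one of the $\qbinom{k}{1}_q$ points of $E$. Given a point $P$, since $\tau\ge 2$ some member $E'$ misses $P$. Hence each member through $P$ also contains a point of $E'$, which gives a $2$-subspace through $P$. Since $\tau\ge3$, that $2$-subspace is again missed by some member, so we can branch once more. This yields
\begin{equation*}
|\mathcal F|\le \qbinom{k}{1}_q^3\qbinom{n-3}{k-3}_q .
\end{equation*}
For $q\ge3,\ n\ge 2k+1$ (resp.\ $q=2,\ n\ge 2k+2$) this should be strictly less than $H_q(n,k,1)$, which is of order $\qbinom{k}{1}_q\, q^{k-1}\qbinom{n-2}{k-2}_q$. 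Checking this inequality needs some care, and for $k=3$ it must be sharpened.

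\emph{Case $\tau=2$.} Choose a point $P$ of maximum degree $|\mathcal F[P]|$, and let $\mathcal G=\mathcal F\setminus\mathcal F[P]\neq\varnothing$. Every member of $\mathcal F[P]$ must meet every $G\in\mathcal G$. I would split by $\dim\langle\mathcal G\rangle$, the dimension of the span of the members of $\mathcal G$.
\begin{itemize}
\item If all members of $\mathcal G$ lie in the $(k+1)$-space $P+E$ for a single $E\in\mathcal G$, then $\mathcal G\subseteq\qbinom{P+E}{k}\setminus\mathcal F[P]$, so $|\mathcal G|\le q^k$. Also $\mathcal F[P]$ is contained in $\{F\ni P: F\cap E\neq\{0\}\}$, whose size is $\qbinom{n-1}{k-1}_q-q^{k(k-1)}\qbinom{n-k-1}{k-1}_q$ by Lemma~\ref{lem:counting}. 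Adding the two bounds gives exactly $H_q(n,k,1)$ by Fact~\ref{fact:H_q(n,k,1)}. Equality forces $\mathcal F=\mathcal H_q(n,k,1)$.
\item If $\mathcal G$ contains two members $E_1,E_2$ with $E_2\not\le P+E_1$, then every member of $\mathcal F[P]$ must meet both $E_1$ and $E_2$. Inclusion--exclusion with Lemma~\ref{lem:counting} then shows that $|\mathcal F[P]|$ loses a term of order $q^{\Theta(k)}\qbinom{n-3}{k-2}_q$ compared with the first case. Meanwhile $|\mathcal G|$ is controlled by the maximality of $\deg P$ together with $\tau=2$: each member of $\mathcal G$ meets the covering line through $P$ in a point $\ne P$.
\end{itemize}

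The main obstacle is this last sub-case, especially for $k=3$. There the family $\mathcal H'_q(n,3,1)$ (all $3$-spaces meeting $E$ in a plane) has $\tau=2$ and the same size, and its ``$\mathcal G$'' is not confined to a single $(k+1)$-space. I would handle $k=3$ separately, by showing that in the tight situation all members through $P$ and all of $\mathcal G$ must meet a fixed $3$-space $E$ in dimension at least $2$. For general $k$, I would verify the strict inequality by comparing leading powers of $q$, using the thresholds $n\ge 2k+1$ (or $2k+2$ when $q=2$) exactly at this step.
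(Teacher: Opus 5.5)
The paper does not prove this statement; it cites Blokhuis et al.\ and only invokes it with $n\ge 3k+2$ (the base case $s=1$ of Theorem~\ref{thm:q-HM}). Your first sub-case for $\tau=2$ is correct. If $\mathcal G\subseteq\qbinom{P+E}{k}$, the bounds $q^k$ for $\mathcal G$ and $\qbinom{n-1}{k-1}_q-q^{k(k-1)}\qbinom{n-k-1}{k-1}_q$ for $\mathcal F[P]$ sum exactly to $H_q(n,k,1)$ by Fact~\ref{fact:H_q(n,k,1)}, with equality only for $\mathcal H_q(n,k,1)$.

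The case $\tau\ge 3$, however, fails in the lower part of the stated range, because you overestimate $H_q(n,k,1)$ by a factor of about $q^{k-1}$. Write $[a]_q=\qbinom{a}{1}_q$. Then $H_q(n,k,1)=B_q(n-1,k-1,k)+q^k$, and for $n\ge 2k+1$ Lemma~\ref{lem:B-estimate} places this between $\tfrac12[k]_q\qbinom{n-2}{k-2}_q$ and $[k]_q\qbinom{n-2}{k-2}_q+q^k$. On the other hand, $[k]_q^3\qbinom{n-3}{k-3}_q=[k]_q\qbinom{n-2}{k-2}_q\cdot [k]_q^2[k-2]_q/[n-2]_q$, and the last ratio is of order $q^{3k-2-n}$. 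So the branching bound drops below $H_q(n,k,1)$ only once $n$ is roughly $3k$ or more; at $n=2k+1$ it is too large by a factor of order $q^{k-3}$. Concretely, $q=3$, $k=3$, $n=7$ satisfies the hypotheses, yet $[3]_3^3\qbinom{4}{0}_3=2197>1561=H_3(7,3,1)$. So for $2k+1\le n\lesssim 3k$, families with $\tau\ge3$ need a sharper argument than three rounds of branching. In the range $n\ge 3k+2$ that the paper actually uses, your estimate does work.

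The other sub-case of $\tau=2$, where some $E_1,E_2\in\mathcal G$ have $E_2\not\le P+E_1$, is where the real content lies, and it is not yet an argument. First, you have not shown that a $2$-dimensional cover passes through the maximum-degree point $P$. Second, even granting such a cover $T$, your two ingredients (each $G\in\mathcal G$ meets $T$ in a point other than $P$, and $\deg P$ is maximal) only give $|\mathcal G|\le q|\mathcal F[P]|$, that is, $|\mathcal F|\le (q+1)|\mathcal F[P]|$. This is too weak:

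\begin{itemize}
\item When $\dim(P+E_1+E_2)=k+2$, the constraints from $E_1,E_2$ still allow every $k$-space through $P$ meeting the $k$-space $(P+E_1)\cap(P+E_2)$ in dimension at least $2$. That is about $[k-1]_q\qbinom{n-2}{k-2}_q$ members.
\item Since $(q+1)[k-1]_q=[k]_q+[k-1]_q-1$, the resulting bound exceeds $H_q(n,k,1)$.
\end{itemize}

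What is missing is the structural core of the argument:
\begin{itemize}
\item You must show that $|\mathcal G|$ is smaller than the loss in $|\mathcal F[P]|$ plus $q^k$, using that every member of $\mathcal G$ meets every member of the large family $\mathcal F[P]$.
\item You must analyse how the $2$-dimensional covers are arranged. For instance, if they all lie in a $3$-space $E$ with no common point, then every member meets $E$ in dimension at least $2$. This gives at most $[3]_q\qbinom{n-2}{k-2}_q$ members, which is below $H_q(n,k,1)$ for $k\ge 4$, and it is exactly where $\mathcal H'_q(n,3,1)$ arises for $k=3$.
\end{itemize}

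Your sketch defers all of this, together with the equality analysis for $k=3$. As it stands, it does not prove the theorem.
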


\begin{proof}[Proof of Theorem~\ref{thm:q-HM}]
We prove the theorem by induction on $s$. For a $1$-subspace $I\in\qbinom{V}{1}$, let $\pi_I:V\to V/I,~x\mapsto x+I$ be the quotient map, and write 
\begin{equation*}
\pi_I(\mathcal{F}\setminus\mathcal{F}[I])
:=\{\pi_I(F):F\in\f\setminus\f[I]\}\subseteq \qbinom{V/I}{k}. 
\end{equation*} 
Notice that if $F\in\f\setminus\f[I]$, then $F\cap I=\{0\}$, and hence $\dim\pi_I(F)=k$.

The case $s=1$ follows from Theorem~\ref{thm:HM}, since our assumption $n\ge (2s+1)k-s+3$ gives $n\ge 3k+2$. Now assume $s\ge2$ and that the theorem has been proved for $s-1$.

We need the following simple observation. If, for some $I\in\qbinom{V}{1}$, the quotient family $\pi_I(\mathcal{F}\setminus\mathcal{F}[I])$ is contained in a family isomorphic to $\mathcal B_q(n-1,k,s-1)$ in $V/I$, then $\f$ is contained in a family isomorphic to $\mathcal B_q(n,k,s)$ in $V$. Indeed, in this case there is an $(s-1)$-subspace $\overline S\le V/I$ such that every member of $\pi_I(\mathcal{F}\setminus\mathcal{F}[I])$ intersects $\overline S$ nontrivially. Let $S:=\pi_I^{-1}(\overline S)$. Then $\dim S=s$ and $I\le S$. If $F\in\f[I]$, then $F\cap S\neq\{0\}$ because $I\le F\cap S$; if $F\in\f\setminus\f[I]$, then $\pi_I(F)\cap\overline S\neq\{0\}$, which implies $F\cap S\neq\{0\}$. Thus $\f\subseteq\{F\in\qbinom{V}{k}:F\cap S\neq\{0\}\}$, a contradiction. Therefore, whenever the induction hypothesis is applied to $\pi_I(\mathcal{F}\setminus\mathcal{F}[I])$, this quotient family is not contained in any family isomorphic to $\mathcal B_q(n-1,k,s-1)$.

We also have $\nu_q(\pi_I(\mathcal{F}\setminus\mathcal{F}[I]))\le s$ for every $I\in\qbinom{V}{1}$. Otherwise, $s+1$ members of $\pi_I(\mathcal{F}\setminus\mathcal{F}[I])$ whose sum is direct could be lifted to $s+1$ members of $\f$ whose sum is direct, contradicting $\nu_q(\f)\le s$.

\medskip
\noindent\textbf{Case 1.} Suppose that there exists a $1$-subspace $I\in\qbinom{V}{1}$ such that $\nu_q(\pi_I(\mathcal{F}\setminus\mathcal{F}[I]))\le s-1$. By the observation above and the induction hypothesis, $|\pi_I(\mathcal{F}\setminus\mathcal{F}[I])|\le H_q(n-1,k,s-1)$. Moreover, by Lemma~\ref{lem:preimage count}, for every $K\in\pi_I(\mathcal{F}\setminus\mathcal{F}[I])$, the preimage $\{F\in\qbinom{V}{k}:F\cap I=\{0\},~\pi_I(F)=K\}$
has size $q^k$. Hence $|\f\setminus\f[I]|\le q^k|\pi_I(\mathcal{F}\setminus\mathcal{F}[I])|\le q^kH_q(n-1,k,s-1)$. Since $|\f[I]|\le\qbinom{n-1}{k-1}_q$, Proposition~\ref{prop:H_q(n,k,s)} gives
\begin{equation*}
|\f|\le \qbinom{n-1}{k-1}_q+q^kH_q(n-1,k,s-1)=H_q(n,k,s).
\end{equation*}

It remains to discuss the equality. If $|\f|=H_q(n,k,s)$, then all inequalities above are equalities. Thus $\f[I]=\{F\in\qbinom{V}{k}:I\le F\}$, the image $\pi_I(\mathcal{F}\setminus\mathcal{F}[I])$ has size $H_q(n-1,k,s-1)$, and for every $K\in\pi_I(\mathcal{F}\setminus\mathcal{F}[I])$, the whole preimage
$\{F\in\qbinom{V}{k}:F\cap I=\{0\},~\pi_I(F)=K\}$
is contained in $\f$. By the induction hypothesis, if $k\neq 3$, then $\pi_I(\mathcal{F}\setminus\mathcal{F}[I])$ is isomorphic to $\mathcal H_q(n-1,k,s-1)$; if $k=3$, then $\pi_I(\mathcal{F}\setminus\mathcal{F}[I])$ is isomorphic to either $\mathcal H_q(n-1,3,s-1)$ or $\mathcal H_q'(n-1,3,s-1)$. 
Since the whole preimage of each member of $\pi_I(\mathcal{F}\setminus\mathcal{F}[I])$ is contained in $\f$, we have
\begin{equation*}
\f=\left\{F\in\qbinom{V}{k}:I\le F\right\}\cup\left\{F\in\qbinom{V}{k}:F\cap I=\{0\},~\pi_I(F)\in\pi_I(\mathcal{F}\setminus\mathcal{F}[I])\right\}.
\end{equation*}
By the recursive definitions \eqref{eq:H_q(n,k,s)} and \eqref{eq:H_q'(n,3,s)}, it follows that $\f$ is isomorphic to $\mathcal H_q(n,k,s)$ when $k\neq 3$, and is isomorphic to either $\mathcal H_q(n,3,s)$ or $\mathcal H_q'(n,3,s)$ when $k=3$.

\medskip
\noindent\textbf{Case 2.} Suppose that for every $1$-subspace $I\in\qbinom{V}{1}$, we have $\nu_q(\pi_I(\mathcal{F}\setminus\mathcal{F}[I]))=s$. We show that this case cannot occur when $|\f|\ge H_q(n,k,s)$.

By Lemma~\ref{lem: matching}~{\rm (i)}, there exists $I\in\qbinom{V}{1}$ such that $|\f[I]|\ge |\f|/\qbinom{sk}{1}_q$. Since we are in Case 2, there exist $K_1,\ldots,K_s\in\pi_I(\mathcal{F}\setminus\mathcal{F}[I])$ whose sum is direct in $V/I$. Choose $F_1,\ldots,F_s\in\f\setminus\f[I]$ with $\pi_I(F_i)=K_i$ for all $i\in[s]$, and set $U:=F_1+\cdots+F_s$. Since $K_1+\cdots+K_s$ is direct, we have $\dim\pi_I(U)=sk$. As $\dim U\le sk$, it follows that $\dim U=sk$ and $U\cap I=\{0\}$. Thus $F_1,\ldots,F_s$ form a direct sum and $I\cap U=\{0\}$.

Applying Lemma~\ref{lem: matching}~{\rm (ii)} to this $I$ and $U$, we obtain $|\f[I]|\le B_q(n-1,k-1,sk)=\qbinom{n-1}{k-1}_q-q^{sk(k-1)}\qbinom{n-sk-1}{k-1}_q$. Therefore $|\f|/\qbinom{sk}{1}_q\le B_q(n-1,k-1,sk)$. If $|\f|\ge H_q(n,k,s)$, then $H_q(n,k,s)/\qbinom{sk}{1}_q\le B_q(n-1,k-1,sk)$, contradicting Proposition \ref{prop:H-parameter}. Hence in Case 2 we must have $|\f|<H_q(n,k,s)$.

Combining the two cases gives $|\f|\le H_q(n,k,s)$. The equality cases are precisely those obtained in Case 1. Conversely, the families $\mathcal H_q(n,k,s)$ and, when $k=3$, $\mathcal H_q'(n,3,s)$ satisfy $\nu_q\le s$, are not contained in any family isomorphic to $\mathcal B_q(n,k,s)$, and have size $H_q(n,k,s)$ by Proposition~\ref{prop:H_q(n,k,s)}. This completes the proof.
\end{proof}

The proof of Theorem~\ref{thm:vemc}~{\rm (iii)} follows the same induction argument as the proof of Theorem~\ref{thm:q-HM}, with $B_q(n,k,s)$ in place of $H_q(n,k,s)$.
So here we only sketch the proof.
We use the following vector-space Erd\H{o}s--Ko--Rado theorem, due to Hsieh~\cite{hsieh1975intersection} and Frankl and Wilson~\cite{frankl1986erdos}, as the base case of the induction.

\begin{theorem}[{\cite[Theorem~1]{frankl1986erdos}}]\label{thm:ekr for vs}
Let $V$ be an $n$-dimensional vector space over $\mathbb F_q$, and let
$\ma{F}\subseteq\qbinom{V}{k}$ satisfy $F\cap F'\neq \{0\}$ for all $F,F'\in \ma{F}$.  If $n\ge 2k$, then $|\ma{F}|\le \qbinom{n-1}{k-1}_q$. Moreover, if $n>2k$, equality holds if and only if $\ma{F}=\{F\in \qbinom{V}{k}: I\le F\}$ for some fixed $1$-subspace $I\le V$.
\end{theorem}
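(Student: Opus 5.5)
This statement is the vector-space Erd\H{o}s--Ko--Rado theorem, which the paper cites from \cite{frankl1986erdos} rather than proves. Were I to prove it, I would rely on the tools already in the paper. The bound itself is the case $s=1$ of Theorem~\ref{thm:vemc}~(ii). An intersecting family $\mathcal F$ is exactly a family with $\nu_q(\mathcal F)\le 1$, and \eqref{eq:vector-space-emc-general} with $s=1$ gives $|\mathcal F|\le \qbinom{1}{1}_{q^k}\qbinom{n-1}{k-1}_q=\qbinom{n-1}{k-1}_q$. Equivalently, one can apply the ordinary Hoffman bound to $\mathrm{KN}_q(n,k)$ using Lemma~\ref{lem:qKN}. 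The degree is $\mu_0=q^{k^2}\qbinom{n-k}{k}_q$, and the least eigenvalue is $\mu_1=-q^{k(k-1)}\qbinom{n-k-1}{k-1}_q$ for $n\ge 2k$. A routine simplification gives $\qbinom{n}{k}_q\cdot\frac{-\mu_1}{\mu_0-\mu_1}=\qbinom{n-1}{k-1}_q$.

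For the equality case with $n>2k$, I would use the standard consequence of equality in Hoffman's bound. If $\mathcal F$ attains the bound, then the vector $\mathbf 1_{\mathcal F}-\frac{|\mathcal F|}{N}\mathbf 1$ lies in the $\mu_1$-eigenspace. I would need to check that for $n>2k$ the value $\mu_1$ is strictly the least eigenvalue, with $|\mu_j|<|\mu_1|$ for $j\ge2$; comparing the explicit formula, the ratio $|\mu_2/\mu_1|$ is roughly $q^{1-k}\qbinom{n-k-2}{k-2}_q/\qbinom{n-k-1}{k-1}_q<1$. The $\mu_1$-eigenspace, together with $\mathbf 1$, is spanned by the point-incidence vectors $x_P(F)=[P\le F]$ for $P\in\qbinom{V}{1}$. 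So $\mathbf 1_{\mathcal F}$ is a linear combination $\sum_P c_P x_P$ that takes only the values $0$ and $1$ on $k$-subspaces.

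The main obstacle is this final rigidity step: showing that such a $0/1$ combination, which also describes an intersecting family, must be a star $\{F:I\le F\}$. My first attempt would avoid pure linear algebra and use a combinatorial argument. Equality forces every maximum-size $\mathcal F$ to meet every ``spread-like'' structure in the right proportion. Concretely, I would reuse the averaging in the proof of Proposition~\ref{thm:k|n}, or work inside $(2k)$-subspaces where the $n=2k$ case is tight. From this I would derive that any two members of $\mathcal F$ share a common point with all others, and conclude that $\bigcap\mathcal F\ne\{0\}$.

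Alternatively, the paper's induction framework gives the result directly. By Theorem~\ref{thm:HM}, for $n\ge 2k+1$ (and $n\ge 2k+2$ when $q=2$), any intersecting family that is not contained in a star has size at most $H_q(n,k,1)<\qbinom{n-1}{k-1}_q$. The strict inequality follows from Fact~\ref{fact:H_q(n,k,1)}: $q^{k(k-1)}\qbinom{n-k-1}{k-1}_q>q^k$. Hence a maximum family is a full star. The remaining case $q=2$, $n=2k+1$ would still need the spectral argument above.
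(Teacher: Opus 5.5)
The paper gives no proof of this statement; it only quotes Frankl--Wilson. Your treatment of the bound is correct. For $s=1$, the link bound in the proof of Theorem~\ref{thm:vemc}~(ii) is exactly Hoffman's ratio bound for $\mathrm{KN}_q(n,k)$, that proof does not use the EKR theorem, and $\qbinom{n}{k}_q\frac{-\mu_1}{\mu_0-\mu_1}=\qbinom{n-1}{k-1}_q$ checks out. Deriving uniqueness from Theorem~\ref{thm:HM} together with $H_q(n,k,1)<\qbinom{n-1}{k-1}_q$ is also valid wherever Theorem~\ref{thm:HM} applies ($k=1$ is trivial). It does, however, use a strictly stronger cited result as a black box.

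\textbf{Gap: the case $q=2$, $n=2k+1$, $k\ge 2$.} This is exactly the case excluded from Theorem~\ref{thm:HM}, and you hand it to a rigidity step that you never carry out.

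Hoffman equality correctly places $\mathbf 1_{\mathcal F}$ in $\mathrm{span}\{x_P\}$, but this information cannot see the hypothesis $n>2k$. The inequality $|\mu_j|<|\mu_1|$ for odd $j\ge 3$ already holds at $n=2k$. There the non-star family $\qbinom{H}{k}$, with $H$ a hyperplane, is intersecting and of maximum size. Its indicator is $q^{-(k-1)}\bigl(\sum_{P\le H}x_P-[k-1]_q\mathbf 1\bigr)$, which lies in the same span (here $[a]_q=\qbinom{a}{1}_q$). So everything hinges on showing that a $0/1$ intersecting vector in $\mathrm{span}\{x_P\}$ is a star when $n>2k$.

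Both routes you sketch for this break down at $n=2k+1$.
\begin{itemize}
\item The averaging of Proposition~\ref{thm:k|n} needs a Desarguesian $k$-spread of $V$. That exists only when $k\mid n$, and $k\nmid 2k+1$ for $k\ge 2$.
\item Restricting to $2k$-subspaces $U$ does not transfer tightness. Double counting gives $\mathbb{E}_U\bigl|\mathcal F\cap\qbinom{U}{k}\bigr|=\frac{[2k]_q}{[n]_q}\qbinom{2k-1}{k-1}_q$, strictly below the maximum. Indeed, a star $\{F:P\le F\}$ contains no $k$-subspace of any $U$ with $P\not\le U$. Even on those $U$ where $\mathcal F$ happens to be tight, the local family may be of hyperplane type. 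You would then still need a global gluing argument that uses $n>2k$.
\end{itemize}
Your intermediate claim that ``any two members share a common point with all others'' is not derived from anything.

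As it stands, uniqueness is unproved for $q=2$, $n=2k+1$, and this case needs a genuinely different argument. As far as I recall, the literature settles it algebraically, e.g.\ Godsil--Newman, or Tanaka's width/dual-width classification in the Grassmann scheme.
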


\begin{proof}[Proof of Theorem~\ref{thm:vemc}~{\rm (iii)}]
The case $s=1$ follows from \cref{thm:ekr for vs}. Assume
$s\ge2$, and let $\mathcal F\subseteq\qbinom{V}{k}$ satisfy
$\nu_q(\mathcal F)\le s$. For a $1$-subspace $I\le V$, let $\pi_I:V\to V/I,~x\mapsto x+I$ be the quotient map.

If there exists $I\in\qbinom{V}{1}$ such that
$\nu_q(\pi_I(\mathcal{F}\setminus\mathcal{F}[I]))\le s-1$, then the induction hypothesis gives
$|\pi_I(\mathcal{F}\setminus\mathcal{F}[I])|\le B_q(n-1,k,s-1)$. Since each preimage under the
quotient map has size at most $q^k$, and since
$|\mathcal F[I]|\le\qbinom{n-1}{k-1}_q$, we obtain
\begin{equation*}
|\mathcal F|\le \qbinom{n-1}{k-1}_q+q^kB_q(n-1,k,s-1)=B_q(n,k,s).
\end{equation*}
Moreover, equality forces $\mathcal F[I]=\{F\in\qbinom{V}{k}:I\le F\}$, the
image $\pi_I(\mathcal{F}\setminus\mathcal{F}[I])$ to be extremal for $B_q(n-1,k,s-1)$, and
the preimage of every member of $\pi_I(\mathcal{F}\setminus\mathcal{F}[I])$ to be full. Hence
$\mathcal F$ is isomorphic to $\mathcal B_q(n,k,s)$.

It remains to exclude the case in which
$\nu_q(\pi_I(\mathcal{F}\setminus\mathcal{F}[I]))=s$ for every $I\in\qbinom{V}{1}$. By
Lemma~\ref{lem: matching}~{\rm (i)}, there exists $I\in\qbinom{V}{1}$ such
that $|\mathcal F[I]|\ge |\mathcal F|/\qbinom{sk}{1}_q$. As in the proof of
Theorem~\ref{thm:q-HM}, the assumption
$\nu_q(\pi_I(\mathcal{F}\setminus\mathcal{F}[I]))=s$ allows us to apply
Lemma~\ref{lem: matching}~{\rm (ii)}, and therefore
\begin{equation*}
\frac{|\mathcal F|}{\qbinom{sk}{1}_q}\le B_q(n-1,k-1,sk).
\end{equation*}
If $|\mathcal F|\ge B_q(n,k,s)$, then
\begin{equation*}
\frac{B_q(n,k,s)}{\qbinom{sk}{1}_q}\le B_q(n-1,k-1,sk),
\end{equation*}
contradicting Proposition \ref{prop:B-parameter}.
\end{proof}

\section{Proof of \cref{thm:vscff}}\label{sec:$t$-cover families}

\subsection{Proof of the upper bound}

In this subsection, we prove the upper bound in \cref{thm:vscff}, namely
$C_{q,t}(n,r)\le \qbinom{n}{k}_q/(\qbinom{r}{k}_q-m_q(r,k,s))$ for sufficiently large $n$. 

For a family $\mathcal{F}\subseteq\qbinom{V}{r}$ and a subspace $T\le V$, we say that $T$ is an {\it own subspace} of $F\in\mathcal{F}$ if $F$ is the unique member of $\mathcal{F}$ containing $T$. For each positive integer $a$, let $\mathcal{D}_a(F)$ denote the family of all own $a$-subspaces of $F$.

\begin{proof}[Proof of \cref{thm:vscff}, the upper bound]
We may assume that $|\mathcal{F}|\ge t+1$, since otherwise the desired bound is trivial for all sufficiently large $n$. Since $q,r,t$ are fixed, we may also assume that $n$ is large enough so that $\qbinom{n-k+1}{1}_q/\qbinom{k}{k-1}_q\ge \qbinom{r}{k}_q-m_q(r,k,s)$.

Let $\mathcal{F}\subseteq\qbinom{V}{r}$ be a $t$-cover-free family. Split $\mathcal{F}$ into
$\mathcal{F}_1:=\{F\in\mathcal{F}:\mathcal{D}_{k-1}(F)\neq\varnothing\}$ and $\mathcal{F}_0:=\mathcal{F}\setminus\mathcal{F}_1$. Since every own $(k-1)$-subspace is contained in a unique member of $\mathcal{F}$, the families $\mathcal{D}_{k-1}(F)$, $F\in\mathcal{F}_1$, are pairwise disjoint. In particular,
\begin{equation*}
\left|\bigcup_{F\in\mathcal{F}_1}\mathcal{D}_{k-1}(F)\right|
=
\sum_{F\in\mathcal{F}_1}|\mathcal{D}_{k-1}(F)|
\ge |\mathcal{F}_1|.
\end{equation*}

For each $F\in\mathcal{F}_1$, define
$\mathcal{A}_F:=\{S\in\qbinom{V}{k}:T\le S\text{ for some }T\in\mathcal{D}_{k-1}(F)\}$. Counting pairs $(S,T)$ with $T\le S$, where $S\in\bigcup_{F\in\mathcal{F}_1}\mathcal{A}_F$ and $T\in\bigcup_{F\in\mathcal{F}_1}\mathcal{D}_{k-1}(F)$, gives
\begin{equation*}
\left|\bigcup_{F\in\mathcal{F}_1}\mathcal{A}_F\right|
\ge
\frac{\qbinom{n-k+1}{1}_q}{\qbinom{k}{k-1}_q}
\left|\bigcup_{F\in\mathcal{F}_1}\mathcal{D}_{k-1}(F)\right|
\ge
\frac{\qbinom{n-k+1}{1}_q}{\qbinom{k}{k-1}_q}|\mathcal{F}_1|.
\end{equation*}
Indeed, each member of $\cup_{F\in\mathcal{F}_1}\mathcal{D}_{k-1}(F)$ is contained in exactly $\qbinom{n-k+1}{1}_q$ $k$-subspaces of $V$, while each member of $\cup_{F\in\mathcal{F}_1}\mathcal{A}_F$ contains at most $\qbinom{k}{k-1}_q$ members of $\cup_{F\in\mathcal{F}_1}\mathcal{D}_{k-1}(F)$.

Moreover, $\bigcup_{F\in\mathcal{F}_1}\mathcal{A}_F$ is disjoint from $\bigcup_{F\in\mathcal{F}_0}\mathcal{D}_k(F)$. Indeed, if $S$ belonged to both, then $S$ would contain an own $(k-1)$-subspace $T$ of some member $F_1\in\mathcal{F}_1$, and $S$ would itself be an own $k$-subspace of some member $F_0\in\mathcal{F}_0$. Since $S\le F_0$, we would have $T\le F_0$, which implies that $F_0=F_1$, a contradiction. Also, the families $\mathcal{D}_k(F)$, $F\in\mathcal{F}_0$, are pairwise disjoint. Therefore
\begin{equation*}
\qbinom{n}{k}_q
\ge
\left|\bigcup_{F\in\mathcal{F}_1}\mathcal{A}_F\right|
+
\sum_{F\in\mathcal{F}_0}|\mathcal{D}_k(F)|
\ge
\frac{\qbinom{n-k+1}{1}_q}{\qbinom{k}{k-1}_q}|\mathcal{F}_1|
+
\sum_{F\in\mathcal{F}_0}|\mathcal{D}_k(F)|.
\end{equation*}

\begin{claim}\label{claim:cff-UB-F_0}
For every $F\in\mathcal{F}_0$, we have
\begin{equation*}
|\mathcal{D}_k(F)|\ge \qbinom{r}{k}_q-m_q(r,k,s).
\end{equation*}
\end{claim}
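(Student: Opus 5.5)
Fix $F\in\mathcal{F}_0$ and let $\mathcal{N}:=\qbinom{F}{k}\setminus\mathcal{D}_k(F)$, the $k$-subspaces of $F$ that also lie in some other member of $\mathcal{F}$. Since $F$ is an $r$-dimensional space, it suffices to show $\nu_q(\mathcal{N})\le s$ as a family inside $\qbinom{F}{k}$; then $|\mathcal{N}|\le m_q(r,k,s)$ by definition, and $|\mathcal{D}_k(F)|=\qbinom{r}{k}_q-|\mathcal{N}|$ gives the claim. (If $s=0$ the convention $m_q(r,k,0)=0$ means we must show $\mathcal{N}=\varnothing$, handled by the same argument.)

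Suppose, for contradiction, that $S_1,\dots,S_{s+1}\in\mathcal{N}$ have direct sum, so $\dim(S_1+\cdots+S_{s+1})=(s+1)k$. Each $S_j$ lies in some $U_j\in\mathcal{F}\setminus\{F\}$, so $S_j\le U_j\cap F$. The remaining dimension of $F$ is $r-(s+1)k=(t-s-1)(k-1)$. Choose a complement $C$ with $F=(S_1\oplus\cdots\oplus S_{s+1})\oplus C$, and split $C=C_1\oplus\cdots\oplus C_{t-s-1}$ into $(k-1)$-subspaces. Since $F\in\mathcal{F}_0$, no $(k-1)$-subspace of $F$ is own, so each $C_i$ is contained in some $U'_i\in\mathcal{F}\setminus\{F\}$, with $C_i\le U'_i\cap F$. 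Then
\begin{equation*}
F=\sum_{j=1}^{s+1}S_j+\sum_{i=1}^{t-s-1}C_i\le\sum_{j}(U_j\cap F)+\sum_i(U'_i\cap F),
\end{equation*}
using $t$ members other than $F$. This contradicts $t$-cover-freeness.

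The main obstacle is that the $t$ covering members must be \emph{distinct}, while the $U_j,U'_i$ chosen above may coincide. The fix is to note that if fewer than $t$ distinct members cover $F$, we can pad with arbitrary further members of $\mathcal{F}\setminus\{F\}$; adding members only enlarges the sum $\sum (U\cap F)$. This is possible because $|\mathcal{F}|\ge t+1$ was assumed. The degenerate cases are also harmless: if $k=1$, the $C_i$ are zero and need no cover; if $t-s-1=0$, there is no complement. With these points, the contradiction shows $\nu_q(\mathcal{N})\le s$, and the claim follows.
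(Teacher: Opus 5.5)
Your proposal is correct and follows essentially the same argument as the paper. You show the non-own $k$-subspaces of $F$ satisfy $\nu_q\le s$ by completing a hypothetical $(s+1)$-matching with $t-s-1$ subspaces of dimension $k-1$ (none of which is own, since $F\in\mathcal{F}_0$), then merge coincident covering members and pad to $t$ distinct ones using $|\mathcal{F}|\ge t+1$, exactly as the paper does.
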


Assuming the claim, the preceding inequality and the choice of $n$ imply
\begin{equation*}
\qbinom{n}{k}_q
\ge
\left(\qbinom{r}{k}_q-m_q(r,k,s)\right)|\mathcal{F}_1|
+
\left(\qbinom{r}{k}_q-m_q(r,k,s)\right)|\mathcal{F}_0|
=
\left(\qbinom{r}{k}_q-m_q(r,k,s)\right)|\mathcal{F}|.
\end{equation*}
This gives $|\mathcal{F}|\le \qbinom{n}{k}_q/(\qbinom{r}{k}_q-m_q(r,k,s))$, as desired.

It remains to prove the claim. Fix $F\in\mathcal{F}_0$, and let $\mathcal{G}:=\qbinom{F}{k}\setminus\mathcal{D}_k(F)$ be the family of non-own $k$-subspaces of $F$. It is enough to show that $\nu_q(\mathcal{G})\le s$. Indeed, this would give $|\mathcal{G}|\le m_q(r,k,s)$, and hence $|\mathcal{D}_k(F)|=\qbinom{r}{k}_q-|\mathcal{G}|\ge \qbinom{r}{k}_q-m_q(r,k,s)$.

Suppose, to the contrary, that $\nu_q(\mathcal{G})\ge s+1$. Then there exist $T_1,\ldots,T_{s+1}\in\mathcal{G}$ whose sum is direct. Since each $T_i$ is non-own, for every $i\in[s+1]$ there exists $A_i\in\mathcal{F}\setminus\{F\}$ such that $T_i\le A_i$. As $\dim F=r=(s+1)k+(t-s-1)(k-1)$, we may write $F=T_1\oplus\cdots\oplus T_{s+1}\oplus S$, where $\dim S=(t-s-1)(k-1)$. Decompose $S$ as $S=S_1\oplus\cdots\oplus S_{t-s-1}$, with $\dim S_j=k-1$ for every $j$; when $t-s-1=0$, this sum is empty. Since $F\in\mathcal{F}_0$, no $(k-1)$-subspace of $F$ is own. Hence, for every $j$, there exists $B_j\in\mathcal{F}\setminus\{F\}$ such that $S_j\le B_j$.

Let $C_1,\ldots,C_m$ be the distinct members among $A_1,\ldots,A_{s+1},B_1,\ldots,B_{t-s-1}$. Then $m\le t$, all $C_\ell$ are distinct from $F$, and
\begin{equation*}
F\le \sum_{\ell=1}^m(F\cap C_\ell).
\end{equation*}
If $m<t$, then $|\mathcal{F}|\ge t+1$ allows us to choose further distinct members $C_{m+1},\ldots,C_t\in\mathcal{F}\setminus\{F,C_1,\ldots,C_m\}$. Thus $F\le \sum_{\ell=1}^t(F\cap C_\ell)$, contradicting that $\mathcal{F}$ is $t$-cover-free. This proves the claim and completes the proof.
\end{proof}

\subsection{Proof of the lower bound}

We now prove the lower bound in \cref{thm:vscff}. We first recall the notion of packing designs in vector spaces. Let $V$ be an $n$-dimensional vector space over $\mathbb{F}_q$. A family $\mathcal{P}\subseteq\qbinom{V}{k}$ is called an {\it $[n,k,t]_q$-packing design} if every $t$-subspace of $V$ is contained in at most one member of $\mathcal{P}$.

\begin{lemma}[{\cite[Theorem 2]{blackburn2012asymptotic}}]\label{lem:near-optimal-packing}
Fix integers $q,k,t$, where $q$ is a prime power and $1\le t\le k$. Then for every $\varepsilon>0$, there exists $n_0=n_0(q,k,t,\varepsilon)$ such that, for all $n\ge n_0$, there exists an $[n,k,t]_q$-packing design of size at least $(1-\varepsilon)\qbinom{n}{t}_q/\qbinom{k}{t}_q$.
\end{lemma}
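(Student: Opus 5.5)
The plan is to derive the lemma from the Frankl--R\"odl/Pippenger--Spencer theorem on almost-perfect matchings in nearly regular uniform hypergraphs with small codegrees (the R\"odl nibble). That theorem says the following. Let $\mathcal K$ be a $u$-uniform hypergraph on $N$ vertices with $u$ fixed. Suppose every vertex has degree $(1+o(1))D$ and every pair of distinct vertices has codegree $o(D)$ as $N\to\infty$. Then $\mathcal K$ contains a matching covering all but $o(N)$ vertices, and hence a matching with at least $(1-o(1))N/u$ edges.

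\emph{Step 1: set up the hypergraph.} The case $t=k$ is trivial, since $\qbinom{V}{k}$ itself is a packing, so assume $t<k$. Define $\mathcal K$ with vertex set $\qbinom{V}{t}$, so $N=\qbinom{n}{t}_q$. For each $K\in\qbinom{V}{k}$, put in one edge $e_K:=\qbinom{K}{t}$. Distinct $k$-subspaces give distinct edges, and $\mathcal K$ is $u$-uniform with $u=\qbinom{k}{t}_q$, which depends only on $q,k,t$. A matching in $\mathcal K$ is a set of $k$-subspaces no two of which share a $t$-subspace. This is exactly an $[n,k,t]_q$-packing design, and its size equals the number of edges in the matching.

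\emph{Step 2: check regularity and codegrees.} Every $t$-subspace lies in exactly $D:=\qbinom{n-t}{k-t}_q$ $k$-subspaces, so $\mathcal K$ is exactly $D$-regular. For distinct $T_1,T_2\in\qbinom{V}{t}$, the sum $T_1+T_2$ has dimension at least $t+1$. So the codegree of $T_1,T_2$ is at most the number of $k$-subspaces containing a fixed $(t+1)$-subspace, namely $\qbinom{n-t-1}{k-t-1}_q$. The ratio of this bound to $D$ is
\begin{equation*}
\frac{\qbinom{n-t-1}{k-t-1}_q}{\qbinom{n-t}{k-t}_q}=\frac{q^{k-t}-1}{q^{n-t}-1},
\end{equation*}
which tends to $0$ as $n\to\infty$ with $q,k,t$ fixed. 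Hence all codegrees are $o(D)$.

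\emph{Step 3: apply the theorem.} Pippenger--Spencer (in the form above) now gives, for every $\varepsilon>0$ and all $n\ge n_0(q,k,t,\varepsilon)$, a matching covering at least $(1-\varepsilon)N$ vertices. Such a matching has at least $(1-\varepsilon)\qbinom{n}{t}_q/\qbinom{k}{t}_q$ edges, which is the required packing design.

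The main obstacle is not in the combinatorics, which reduces to the two Gaussian-binomial counts above. It is in making sure the quantifiers of the nibble theorem are used correctly: the uniformity $u$ must be fixed while $N\to\infty$, and the codegree-to-degree ratio must tend to $0$ uniformly. Both conditions hold here because $q,k,t$ are fixed.
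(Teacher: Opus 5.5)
Your proof is correct and is essentially the standard argument behind the cited result of Blackburn and Etzion. They apply the Frankl--R\"odl/Pippenger--Spencer nibble (the same tool the paper later invokes for its induced packing lemma) to the hypergraph on $\qbinom{V}{t}$ whose edges are the sets $\qbinom{K}{t}$; this hypergraph is exactly $\qbinom{n-t}{k-t}_q$-regular, and its codegree-to-degree ratio is at most $(q^{k-t}-1)/(q^{n-t}-1)\to 0$, with the requirement $D\to\infty$ automatic since $t<k$.
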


We say that a family $\mathcal{F}'\subseteq\qbinom{W'}{k}$ is a copy of a family $\mathcal{F}\subseteq\qbinom{W}{k}$ if there exists a linear isomorphism $\varPhi:W\to W'$ such that $\mathcal{F}'=\{\varPhi(F):F\in\mathcal{F}\}$.

We will need the following key lemma.

\begin{lemma}\label{lem:induced-packing}
Fix positive integers $e, k, r$ with $e\ge2$ and $r\ge k+1$, and a prime power $q$. For every $\gamma>0$, there exists $n_0=n_0(e,k,q,r,\gamma)$ such that whenever $n\ge n_0$, the following holds. For every $n$-dimensional vector space $V$ over $\mathbb{F}_q$, every $r$-subspace $W\le V$, and every family $\mathcal{F}\subseteq\qbinom{W}{k}$ of size $e$, there exist pairwise disjoint copies $\mathcal{F}_1,\ldots,\mathcal{F}_m$ of $\mathcal{F}$ in $\qbinom{V}{k}$, with $\mathcal{F}_i\subseteq\qbinom{W_i}{k}$ for some $r$-subspace $W_i\le V$, such that $m\ge (1-\gamma)\qbinom{n}{k}_q/e$, and moreover:
\begin{itemize}
    \item[{\rm (i)}] $\dim(W_i\cap W_j)\le k$ for all $i\ne j$;
    \item[{\rm (ii)}] if $\dim(W_i\cap W_j)=k$ and $A=W_i\cap W_j$, then $A\notin\mathcal{F}_i$ and $A\notin\mathcal{F}_j$.
\end{itemize}
\end{lemma}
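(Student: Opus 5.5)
We plan to obtain the copies $\mathcal F_i$ from a near-optimal packing design in $V$ via a random-embedding/nibble argument. First apply Lemma~\ref{lem:near-optimal-packing} with parameters $(r,k+1)$ in place of $(k,t)$. This gives a family $\mathcal P\subseteq\qbinom{V}{r}$ in which every $(k+1)$-subspace lies in at most one member, so any two distinct members meet in dimension at most $k$. Property (i) is then automatic for any choice of copies supported on members of $\mathcal P$. The difficulty is that $|\mathcal P|\approx\qbinom{n}{k+1}_q/\qbinom{r}{k+1}_q$ is far larger than $\qbinom{n}{k}_q/e$. We therefore cannot use all of $\mathcal P$ with disjoint copies, and must select a subfamily and an embedding of $\mathcal F$ into each selected member.

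The selection runs as follows. For each $P\in\mathcal P$, choose independently a uniformly random linear isomorphism $\varPhi_P:W\to P$, giving a copy $\mathcal F_P=\varPhi_P(\mathcal F)\subseteq\qbinom{P}{k}$. Form the auxiliary $e$-uniform hypergraph $\mathcal K$ on vertex set $\qbinom{V}{k}$ whose edges are the sets $\mathcal F_P$, $P\in\mathcal P$. Pairwise disjoint copies are exactly a matching in $\mathcal K$.

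We must also discard copies that violate (ii), namely those where some $A=P\cap P'$ of dimension $k$ lies in $\mathcal F_P$ or $\mathcal F_{P'}$. We encode this as a conflict: $P$ and $P'$ with $\dim(P\cap P')=k$ cannot both be chosen if $P\cap P'\in\mathcal F_P\cup\mathcal F_{P'}$. The natural way to handle this is to add $P\cap P'$ as an extra forbidden vertex. Concretely, since $P\cap P'=A$ is a $k$-subspace, the condition says that if $A$ is used by the copy $\mathcal F_P$, then no other selected $P'$ may contain $A$ at all. So we replace each edge $\mathcal F_P$ by the larger vertex set
\[
\mathcal F_P\cup\{\text{all }k\text{-subspaces of }P\text{ treated as ``weakly used''}\}.
\]
More cleanly, we use the conflict-free hypergraph matching theorem (Glock--Joos--Kim--K\"uhn--Lichev, or Delcourt--Postle), with conflicts of size two given by pairs $\{P,P'\}$ with $\dim(P\cap P')=k$ and $P\cap P'\in\mathcal F_P\cup\mathcal F_{P'}$.

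We then verify the degree conditions. Each $k$-subspace $A$ lies in about $D:=|\mathcal P|\qbinom{r}{k}_q/\qbinom{n}{k}_q\sim c\,q^{(r-k)(n-r)}\cdot(\text{const})$ members of $\mathcal P$, and is used by $\mathcal F_P$ with probability $e/\qbinom{r}{k}_q$. So the expected vertex degree of $\mathcal K$ is about $De/\qbinom{r}{k}_q$, and by concentration (Chernoff plus a union bound over $\qbinom{n}{k}_q$ vertices, using $D\to\infty$ polynomially in $q^n$) all degrees are $(1\pm o(1))\Delta$. Codegrees of two vertices in $\mathcal K$ are at most the number of $P$ containing their sum, which has dimension at least $k+1$, so they are at most $1$, i.e.\ $o(\Delta)$. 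The conflict degree of a fixed $P$ is at most $\qbinom{r}{k}_q$ times the number of $P'$ through a fixed $k$-subspace that use it, which is $O(\Delta)$. This is of the same order as $\Delta$ for size-$2$ conflicts, as the theorem allows. The matching theorem then yields an almost perfect conflict-free matching of size $(1-\gamma)\qbinom{n}{k}_q/e$.

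The main obstacle is the conflict step. If the available conflict-free matching tool is not assumed, a fallback is a simple alteration: first take an almost perfect matching (Pippenger--Spencer, or Frankl--R\"odl), then delete every copy involved in a violated conflict. A copy $\mathcal F_P$ using $A$ is in conflict only if another chosen $P'$ passes through $A$. Each $A$ is covered at most once by the matching's $\mathcal F$-vertices, but may lie in many chosen $P'$ as a non-member $k$-subspace. The expected number of such violations must therefore be shown to be $o(\qbinom{n}{k}_q)$, which seems to require a randomized (nibble) selection that is close to uniform. Making this quantitative is the step I expect to be hardest.
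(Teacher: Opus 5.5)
The packing-design skeleton, property (i), and your codegree bound are fine. The treatment of (ii), however, has a genuine gap.

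\textbf{Why the conflict step fails.} Write $Q=\qbinom{n}{k}_q$ and $N=\qbinom{r}{k}_q$. Let $M\approx q^{n-r}$ be the number of blocks of $\mathcal P$ through a $k$-subspace; this is not $q^{(r-k)(n-r)}$, which counts all $r$-subspaces through it. Then $\mathcal K$ has degrees $\Delta\approx eM/N$. For $e<N$, a fixed $\mathcal F_P$ conflicts with about $eM(1-e/N)+(N-e)eM/N=2(N-e)\Delta$ edges that are vertex-disjoint from it.

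No conflict-free matching theorem allows this. Glock--Joos--Kim--K\"uhn--Lichev require every conflict to have size at least $3$. Size-two conflicts are harmless only if each edge lies in $o(\Delta)$ of them. No general result can do better: colour the edges of a $\Delta$-regular $e$-graph on $Q$ vertices with $Q/(2e)$ colours, each used equally often, and let equally coloured edges conflict. Each edge then has about $2\Delta$ conflicts, yet every conflict-free matching has at most $Q/(2e)$ edges.

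\textbf{Why the fallback fails.} The alteration fallback fails for a structural reason, not a quantitative one. Take any valid family of $m\ge(1-\gamma)Q/e$ copies. For each chosen $P$, every $A\in\qbinom{P}{k}\setminus\mathcal F_P$ must avoid all other chosen copies, since otherwise $P\cap P'=A\in\mathcal F_{P'}$. So all $m(N-e)$ such incidences must land in the at most $\gamma Q$ uncovered $k$-subspaces, and each of those must lie in many chosen blocks. A near-uniform nibble selection does the opposite. Its uncovered set is small and spread out, so almost every non-copy subspace of a chosen block is covered by another chosen copy. Hence almost every chosen block violates (ii): the number of violations is $\Theta(Q)$, and deleting them destroys the matching.

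\textbf{The missing idea.} The paper chooses the junk set first and lets it dictate the copies. Keep each $k$-subspace independently with probability $1-p$, where $p=q^{-hn}$ with $h>0$ small enough that $p^{N-e}\ge q^{-n/10}$, and let $\mathcal R$ be the survivors. Declare $\mathcal A\subseteq\mathcal R$ an edge exactly when $\mathcal A=\qbinom{Y}{k}\cap\mathcal R$ for some $Y\in\mathcal P$ and $\mathcal A$ is a copy of $\mathcal F$.

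Condition (ii) then follows from disjointness alone. If $A=Y_i\cap Y_j$ has dimension $k$ and $A\in\mathcal A_i$, then $A\in\mathcal R$ and $A\le Y_j$ force $A\in\mathcal A_j$. No embeddings $\varPhi_P$ and no conflicts are needed.

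The rest is routine:
\begin{itemize}
\item Given $A\in\mathcal R$, its degree is a sum of independent indicators over the blocks through $A$, since two such blocks share no other $k$-subspace.
\item The mean is about $D=M\lambda(1-p)^{e-1}p^{N-e}$, where $\lambda\ge1$ is the number of copies of $\mathcal F$ in $\qbinom{W}{k}$ through a fixed $k$-subspace. Since $M\approx q^{n-r}$ and $p^{N-e}\ge q^{-n/10}$, we get $D\ge q^{\Omega(n)}$.
\item Chernoff and a union bound give degree $(1\pm\delta)D$ for every vertex lying in nearly $M$ blocks, which is almost all of them by near-optimality of $\mathcal P$. They also give degree $<2D$ for every vertex.
\item Codegrees are at most $1$, and $|\mathcal R|\ge(1-\varepsilon)Q$ because $p\to0$.
\end{itemize}
The Frankl--R\"odl theorem then yields $(1-\varepsilon)^2Q/e$ pairwise disjoint induced copies.
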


We postpone the proof of Lemma~\ref{lem:induced-packing} and now prove the lower bound in \cref{thm:vscff}.

\begin{proof}[Proof of \cref{thm:vscff}, the lower bound]
Fix $\gamma>0$. It suffices to construct, for all sufficiently large $n$, a $t$-cover-free family in $\qbinom{V}{r}$ of size at least
\begin{equation*}
(1-\gamma)\frac{\qbinom{n}{k}_q}{\qbinom{r}{k}_q-m_q(r,k,s)}.
\end{equation*}
The case $k=r=1$ is immediate, since the family of all $1$-subspaces of $V$ is $t$-cover-free and has size $\qbinom{n}{1}_q$. We may therefore assume $r\ge k+1$.

Let $W\le V$ be an $r$-subspace. Choose $\mathcal{N}\subseteq\qbinom{W}{k}$ with $|\mathcal{N}|=m_q(r,k,s)$ and $\nu_q(\mathcal{N})\le s$, and set $\mathcal{H}:=\qbinom{W}{k}\setminus\mathcal{N}$. Then $|\mathcal{H}|=\qbinom{r}{k}_q-m_q(r,k,s)$. We first check that $e:=|\mathcal{H}|\ge2$, which is needed in order to apply Lemma~\ref{lem:induced-packing}. If $s=0$, then $\mathcal N=\varnothing$, and hence $e=\qbinom{r}{k}_q\ge2$, since $r\ge k+1$. Now assume $s\ge1$. Since $r=(s+1)k+(t-s-1)(k-1)\ge (s+1)k$, we may apply \eqref{eq:vector-space-emc-general} with $n=r$. Thus
$m_q(r,k,s)\le \qbinom{s}{1}_{q^k}\qbinom{r-1}{k-1}_q$. Writing $[a]_q:=\qbinom{a}{1}_q$, and using $\qbinom{r}{k}_q=([r]_q/[k]_q)\qbinom{r-1}{k-1}_q$ and $\qbinom{s}{1}_{q^k}=[sk]_q/[k]_q$, we obtain
\begin{equation*}
e
\ge
\left(\frac{[r]_q-[sk]_q}{[k]_q}\right)\qbinom{r-1}{k-1}_q
=
q^{sk}\frac{[r-sk]_q}{[k]_q}\qbinom{r-1}{k-1}_q.
\end{equation*}
Since $r-sk=k+(t-s-1)(k-1)\ge k$, we have $[r-sk]_q\ge [k]_q$. Therefore $e\ge q^{sk}\ge2$.

Apply Lemma~\ref{lem:induced-packing} to the family $\mathcal{H}$. We obtain pairwise disjoint copies $\mathcal{H}_1,\ldots,\mathcal{H}_m$ of $\mathcal{H}$, with $\mathcal{H}_i\subseteq\qbinom{W_i}{k}$ for some $r$-subspace $W_i\le V$, such that $m\ge (1-\gamma)\qbinom{n}{k}_q/e$, and conditions {\rm (i)} and {\rm (ii)} of Lemma~\ref{lem:induced-packing} hold. Since $e=\qbinom{r}{k}_q-m_q(r,k,s)$, it remains only to prove that $\{W_i\}_{i=1}^m$ is $t$-cover-free.

Suppose not. Then there are distinct indices $i_1,\ldots,i_t,j$ such that $W_j\le (W_{i_1}\cap W_j)+\cdots+(W_{i_t}\cap W_j)$. Relabeling, assume $j=t+1$ and $i_\ell=\ell$ for $1\le\ell\le t$. Put $A_i:=W_i\cap W_{t+1}$ for $i\in[t]$. By Lemma~\ref{lem:induced-packing}~{\rm (i)}, $\dim A_i\le k$, and since $W_{t+1}\le A_1+\cdots+A_t$, we have $W_{t+1}=A_1+\cdots+A_t$.

Set $B_0:=\{0\}$, $B_i:=A_1+\cdots+A_i$, and $d_i:=\dim B_i-\dim B_{i-1}$. Then $0\le d_i\le k$ for all $i$, and $\sum_{i=1}^t d_i=\dim W_{t+1}=r=(s+1)k+(t-s-1)(k-1)$. If fewer than $s+1$ of the $d_i$'s were equal to $k$, then $\sum_{i=1}^t d_i\le sk+(t-s)(k-1)=r-1$, a contradiction. Hence at least $s+1$ of the $d_i$'s are equal to $k$.

Whenever $d_i=k$, we have $\dim A_i=k$ and $A_i\cap B_{i-1}=\{0\}$. Thus the $A_i$'s corresponding to these indices are $k$-subspaces whose sum is direct. Relabeling these indices if necessary, assume that $A_1,\ldots,A_{s+1}$ are $k$-subspaces with direct sum. For each $i\in[s+1]$, we have $A_i=W_i\cap W_{t+1}$, so Lemma~\ref{lem:induced-packing}~{\rm (ii)} gives $A_i\notin\mathcal{H}_{t+1}$. Let $\mathcal{N}_{t+1}:=\qbinom{W_{t+1}}{k}\setminus\mathcal{H}_{t+1}$. Since $\mathcal{H}_{t+1}$ is a copy of $\mathcal{H}$, the family $\mathcal{N}_{t+1}$ is a copy of $\mathcal{N}$, and hence $\nu_q(\mathcal{N}_{t+1})=\nu_q(\mathcal{N})\le s$. However, $A_1,\ldots,A_{s+1}\in\mathcal{N}_{t+1}$ and their sum is direct, a contradiction.

Therefore $\{W_i\}_{i=1}^m$ is $t$-cover-free. This gives the desired lower bound. Since $\gamma>0$ was arbitrary, the proof is complete.
\end{proof}

\subsection{Proof of the key lemma}

Below we prove Lemma~\ref{lem:induced-packing}. We need the following standard Chernoff bound.

\begin{fact}[Chernoff bound]\label{fact:chernoff}
Let $X=X_1+\cdots+X_m$, where $X_1,\ldots,X_m$ are independent Bernoulli random variables, and let $\mu=\mathbb E X$. Then, for every $0<\eta<1$,
\begin{equation*}
\Pr[X\le (1-\eta)\mu]\le \exp(-\eta^2\mu/2)
\quad\text{and}\quad
\Pr[X\ge (1+\eta)\mu]\le \exp(-\eta^2\mu/3).
\end{equation*}
\end{fact}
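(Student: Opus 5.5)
We prove both tails by the standard exponential-moment (Bernstein--Chernoff) method. Write $p_i:=\Pr[X_i=1]$, so that $\mu=\sum_i p_i$. For any real $\lambda$, independence of the $X_i$ gives
\begin{equation*}
\mathbb E\, e^{\lambda X}=\prod_{i=1}^m\bigl(1+p_i(e^{\lambda}-1)\bigr)\le \prod_{i=1}^m \exp\bigl(p_i(e^{\lambda}-1)\bigr)=\exp\bigl(\mu(e^{\lambda}-1)\bigr),
\end{equation*}
where we used $1+x\le e^x$. This single moment bound drives both tails.

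\textbf{Upper tail.} Take $\lambda>0$ and apply Markov's inequality to $e^{\lambda X}$:
\begin{equation*}
\Pr[X\ge(1+\eta)\mu]\le \exp\bigl(\mu(e^\lambda-1)-\lambda(1+\eta)\mu\bigr).
\end{equation*}
Choosing $\lambda=\ln(1+\eta)$ yields the bound $\bigl(e^{\eta}/(1+\eta)^{1+\eta}\bigr)^{\mu}$.

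\textbf{Lower tail.} Take $\lambda=-\ln(1-\eta)>0$ and apply Markov to $e^{-\lambda X}$:
\begin{equation*}
\Pr[X\le(1-\eta)\mu]\le \exp\bigl(\mu(e^{-\lambda}-1)+\lambda(1-\eta)\mu\bigr)=\bigl(e^{-\eta}/(1-\eta)^{1-\eta}\bigr)^{\mu}.
\end{equation*}

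It remains to verify two elementary one-variable inequalities, valid for $0<\eta<1$:
\begin{equation*}
-\eta-(1-\eta)\ln(1-\eta)\le -\eta^2/2
\qquad\text{and}\qquad
\eta-(1+\eta)\ln(1+\eta)\le -\eta^2/3.
\end{equation*}

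For the first, expand $-(1-\eta)\ln(1-\eta)=\eta-\sum_{j\ge2}\frac{\eta^j}{j(j-1)}$. The left-hand side then becomes $-\sum_{j\ge2}\frac{\eta^j}{j(j-1)}$. This is at most $-\eta^2/2$, since the $j=2$ term alone contributes exactly $-\eta^2/2$ and all remaining terms are negative.

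For the second, set $g(\eta):=\eta-(1+\eta)\ln(1+\eta)+\eta^2/3$. Then $g(0)=0$ and $g'(\eta)=-\ln(1+\eta)+2\eta/3$, with $g'(0)=0$. Moreover $g''(\eta)=-1/(1+\eta)+2/3$, which is negative on $[0,1/2)$ and positive on $(1/2,1)$. So $g'$ first decreases from $0$ and then increases. Since $g'(1)=2/3-\ln 2<0$, we get $g'\le 0$ on $[0,1]$, hence $g\le 0$ there.

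This second calculus inequality, on the whole range $\eta\in(0,1)$, is the only step requiring care; everything else is mechanical. Substituting the two inequalities into the tail bounds gives exactly $\exp(-\eta^2\mu/2)$ and $\exp(-\eta^2\mu/3)$.
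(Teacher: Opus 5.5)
Your proof is correct and complete: the moment bound $\mathbb E\, e^{\lambda X}\le \exp(\mu(e^{\lambda}-1))$, the choices $\lambda=\ln(1+\eta)$ and $\lambda=-\ln(1-\eta)$, and both one-variable inequalities all go through, including the check $g'(1)=2/3-\ln 2<0$ (and $\mu=0$ is trivially covered). The paper gives no proof of this fact and simply quotes it as the standard Chernoff bound, so your exponential-moment argument is the textbook derivation it implicitly relies on.
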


For an $e$-uniform hypergraph $\mathcal{H}$ on $N$ vertices and two distinct vertices $x,y$, let $d_{\mathcal{H}}(x)$ denote the number of edges in $\mathcal{H}$ containing $x$, and let $d_{\mathcal{H}}(x,y)$ denote the number of edges in $\mathcal{H}$ containing both $x$ and $y$. The following lemma gives a sufficient condition under which a uniform hypergraph contains a near-perfect matching.

\begin{lemma}[{\cite{frankl1985near}}]\label{lem:Rodl-nibble}
For every integer $e\ge2$, every real $\mu\ge1$, and every $\varepsilon>0$, there exist $\delta=\delta(e,\mu,\varepsilon)>0$ and $D_0=D_0(e,\mu,\varepsilon)$ such that, for every $D\ge D_0$, the following holds. Every $e$-uniform hypergraph $\mathcal{H}$ on $N$ vertices satisfying
\begin{itemize}
    \item[{\rm (1)}] $d_{\mathcal{H}}(x)=(1\pm\delta)D$ for all but at most $\delta N$ vertices $x\in V(\mathcal{H})$;
    \item[{\rm (2)}] $d_{\mathcal{H}}(x)<\mu D$ for every $x\in V(\mathcal{H})$;
    \item[{\rm (3)}] $d_{\mathcal{H}}(x,y)<\delta D$ for every pair of distinct vertices $x,y\in V(\mathcal{H})$;
\end{itemize}
contains a matching of at least $(1-\varepsilon)N/e$ edges.
\end{lemma}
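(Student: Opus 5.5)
This is the Frankl--R\"odl near-perfect matching theorem, and I would prove it by the R\"odl nibble: build the matching in a bounded number of rounds. In each round we pick a tiny random set of edges, keep the ones that do not collide, delete all covered vertices, and check that what remains again satisfies (1)--(3) with slightly worse parameters.

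\emph{One round.} Fix a small $\theta=\theta(e,\varepsilon)>0$. Select each edge of $\mathcal H$ independently with probability $\theta/D$, and let $\mathcal M$ be the set of selected edges that meet no other selected edge.
\begin{itemize}
\item For a typical vertex $x$ (degree $(1\pm\delta)D$), the number of selected edges through $x$ is approximately Poisson with mean $\approx\theta$. By condition (3), the edges meeting a given edge are essentially disjoint apart from $O(\delta D)$ overlaps. Hence $\Pr[x\text{ covered by }\mathcal M]=\theta e^{-e\theta}(1\pm O(\theta^2+\delta))$, or something of this shape up to $O(\theta^2)$; call it $p$.
\item Delete the vertices covered by $\mathcal M$, and also the vertices lying in selected edges that were thrown away. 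The latter are a $\theta$-fraction, each lost with probability $\approx\theta$. Each surviving vertex stays with probability $\approx e^{-\theta}$.
\item A surviving typical vertex keeps an edge $f\ni x$ exactly when the other $e-1$ vertices of $f$ survive. By (3) these survival events are nearly independent, so the expected new degree is $D'\approx De^{-(e-1)\theta}$.
\item Bounded degree and small codegree pass to the subhypergraph automatically: (2) holds with the same $\mu$, and (3) holds with $\delta D\le \delta' D'$ for $\delta'=\delta e^{(e-1)\theta}$.
\end{itemize}

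\emph{Concentration.} This is the step I expect to be the main obstacle. We must show that all but $\delta'N'$ vertices have new degree $(1\pm\delta')D'$. Because condition (1) tolerates an exceptional $\delta N$ vertices, a second-moment argument suffices.
\begin{itemize}
\item Compute $\mathrm{Var}(\deg'(x))$ by pairing edges through $x$. Codegree $<\delta D$ makes dependent pairs a negligible fraction, since two edges through $x$ depend only if their neighbourhoods intersect. Chebyshev then bounds $\Pr[|\deg'(x)-D'|>\delta'D']$ by a quantity that is small when $\delta$ is small relative to $\delta'$.
\item Markov's inequality bounds the number of such bad vertices.
\item The bad vertices inherited from (1) number at most $\delta N$. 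Condition (2) ensures they carry at most $\mu D\cdot\delta N$ edges, so they cannot distort the degree counts of good vertices significantly.
\item Fix one outcome satisfying all the expected-value statements simultaneously, including $|\mathcal M|\approx pN/e$ and $N'\approx e^{-\theta}N$ up to a factor $1\pm O(\theta^2)$.
\end{itemize}

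\emph{Iteration.} Repeat for $T=T(e,\varepsilon,\theta)$ rounds. Choose $\theta$ and then $\delta$ via a backward chain $\delta_T\gg\delta_{T-1}\gg\dots\gg\delta_0=\delta$, and take $D_0$ large enough that $D_T\approx De^{-(e-1)\theta T}$ is still large enough for the Poisson and Chebyshev estimates. The vertex count decays geometrically as $N_T\approx e^{-\theta T}N$, so taking $T$ with $e^{-\theta T}<\varepsilon/2$ leaves few vertices at the end.

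The vertices wasted over all rounds are those lying in colliding selected edges. In each round these amount to $O(\theta)$ times the number of covered vertices. The union of the per-round matchings is a matching, and it covers all but $(\varepsilon/2+O(\theta))N$ vertices. This gives at least $(1-\varepsilon)N/e$ edges once $\theta$ is small.
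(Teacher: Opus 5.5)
The paper does not prove this lemma. It is quoted from Frankl--R\"odl and used as a black box in the proof of Lemma~\ref{lem:induced-packing}, so there is no proof in the paper to compare against. Your outline is the standard R\"odl-nibble proof, essentially the Alon--Spencer proof of Pippenger's theorem. The one variation is harmless: each round you keep only the isolated selected edges, so you build a matching directly instead of building a near-optimal cover and trimming it afterwards. Your closing accounting is the right way to finish: waste is $O(\theta)\cdot\#\{\text{deleted vertices}\}$ per round, hence $O(\theta)N$ overall. The sketch is sound in outline, but two steps are wrong as stated and need repair.

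\textbf{First, condition (2) does not carry over with the same $\mu$.} You only know $d'(x)\le d(x)<\mu D=\mu e^{(e-1)\theta}D'$, and this can be sharp. For example, if every edge through a neighbour of $x$ also contains $x$, then $x$ keeps its whole degree whenever it survives. So $\mu$ degrades exactly like your codegree constant:
\[
\mu_i=\mu e^{(e-1)\theta i}\le\mu e^{(e-1)\theta T}\approx\mu(2/\varepsilon)^{e-1}.
\]
This bound depends only on $(e,\mu,\varepsilon)$. It therefore suffices to use $\mu_T$ in place of $\mu$ both in the backward choice of the $\delta_i$ and in the per-round waste constant that dictates how small $\theta$ must be.

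\textbf{Second, the variance step fails as you justify it.} For two edges $f,g\ni x$, the survival events are generally \emph{not} independent. Let $N(f)$ and $N(g)$ be the sets of edges avoiding $x$ that meet $f\setminus\{x\}$ and $g\setminus\{x\}$ respectively. These sets usually intersect (think of a complete hypergraph), so dependent pairs are not a negligible fraction. What the codegree bound gives you is that the overlap is small. If $f\cap g=\{x\}$, then
\[
|N(f)\cap N(g)|\le\sum_{y\in f\setminus\{x\},\,z\in g\setminus\{x\}}d(y,z)<(e-1)^2\delta D .
\]
Hence, conditional on $x$ surviving,
\[
\mathrm{Cov}(\mathbf{1}_f,\mathbf{1}_g)\le\Bigl(1-\frac{\theta}{D}\Bigr)^{-(e-1)^2\delta D}-1=O(\theta\delta).
\]
Ordered pairs with $|f\cap g|\ge2$ number at most $d(x)(e-1)\delta D$, and each contributes covariance at most $1$. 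Together this gives $\mathrm{Var}(d'(x))=O(\delta D^2+D)$ for good $x$. Chebyshev then works as you intend, provided $\delta$ and $1/D$ are small compared with $\delta'^2$.
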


Finally, we present the proof of Lemma~\ref{lem:induced-packing}.

\begin{proof}[Proof of Lemma~\ref{lem:induced-packing}]
We may assume $0<\gamma<1$. Choose $0<\varepsilon<1$ such that $(1-\varepsilon)^2\ge1-\gamma$. Apply Lemma~\ref{lem:Rodl-nibble} with parameters $(e,\mu,\varepsilon)=(e,2,\varepsilon)$, and let $\delta$ and $D_0$ be the resulting constants. Decreasing $\delta$ if necessary, we may assume $0<\delta<1$. Choose $\rho>0$ so that $\sqrt\rho<\min\{\delta/4,\delta(1-\varepsilon)/2\}$.

By Lemma~\ref{lem:near-optimal-packing}, for all sufficiently large $n$, there exists an $[n,r,k+1]_q$-packing design $\mathcal P\subseteq\qbinom{V}{r}$ with $|\mathcal P|\ge (1-\rho)\qbinom{n}{k+1}_q/\qbinom{r}{k+1}_q$. Fix such $n$ and $\mathcal P$. Write $Q:=\qbinom{n}{k}_q$, $N:=\qbinom{r}{k}_q$, and $M:=\qbinom{n-k}{1}_q/\qbinom{r-k}{1}_q$. Set $h:=1/(10\max\{1,N-e\})$ and $p:=q^{-hn}$.

Let $\mathcal R\subseteq\qbinom{V}{k}$ be obtained by picking each $k$-subspace of $V$ independently with probability $1-p$. Define an $e$-uniform hypergraph $\mathcal J$ on vertex set $\mathcal R$ as follows: an $e$-set $\mathcal A\subseteq\mathcal R$ is an edge of $\mathcal J$ if $\mathcal A$ is a copy of $\mathcal F$ and $\qbinom{Y}{k}\cap\mathcal R=\mathcal A$ for some $Y\in\mathcal P$.

A matching in $\mathcal J$ gives the desired family. Indeed, if $\mathcal A_1,\ldots,\mathcal A_\ell$ is a matching in $\mathcal J$, then for each $i$ there exists $Y_i\in\mathcal P$ such that $\mathcal A_i=\qbinom{Y_i}{k}\cap\mathcal R$, and $\mathcal A_i$ is a copy of $\mathcal F$. Since $\mathcal P$ is an $[n,r,k+1]_q$-packing design, distinct $Y_i,Y_j$ satisfy $\dim(Y_i\cap Y_j)\le k$. Moreover, if $A=Y_i\cap Y_j$ has dimension $k$, then $A\notin\mathcal A_i$ and $A\notin\mathcal A_j$, since otherwise $A\in\mathcal A_i\cap\mathcal A_j$, contradicting that the $\mathcal A_i$'s are disjoint. Thus it remains to show that $\mathcal J$ has a matching of size at least $(1-\gamma)Q/e$.

For $A\in\qbinom{V}{k}$, let $d_{\mathcal P}(A):=|\{Y\in\mathcal P:A\le Y\}|$. We first note that $d_{\mathcal P}(A)\le M$ for every $A$. Indeed, every $Y\in\mathcal P$ containing $A$ contains exactly $\qbinom{r-k}{1}_q$ $(k+1)$-subspaces through $A$, and no such $(k+1)$-subspace can lie in two different members of $\mathcal P$. Since the total number of $(k+1)$-subspaces of $V$ through $A$ is $\qbinom{n-k}{1}_q$, we get $d_{\mathcal P}(A)\le M$.

By double counting pairs $(A,Y)$ with $A\le Y$ and $Y\in\mathcal P$, we have
\begin{equation*}
\frac{1}{Q}\sum_{A\in\qbinom{V}{k}}d_{\mathcal P}(A)
=
\frac{|\mathcal P|\qbinom{r}{k}_q}{\qbinom{n}{k}_q}
\ge (1-\rho)M.
\end{equation*}
Since $d_{\mathcal P}(A)\le M$ for each $A\in\qbinom{V}{k}$, all but at most $\sqrt\rho Q$ of the $k$-subspaces $A$ satisfy $d_{\mathcal P}(A)\ge(1-\sqrt\rho)M$. Let $\mathcal G:=\{A\in\qbinom{V}{k}:(1-\sqrt\rho)M\le d_{\mathcal P}(A)\le M\}$. Then $|\mathcal G|\ge(1-\sqrt\rho)Q$.

Let $\lambda$ be the number of distinct copies of $\mathcal F$ in
$\qbinom{W}{k}$ that contain a fixed $k$-subspace of $W$. This is well defined because ${\rm GL}(W)$ acts transitively on $\qbinom{W}{k}$, and $\lambda\ge1$. Set $D:=M\lambda(1-p)^{e-1}p^{N-e}$. Since $M=\Theta(q^{n-r})$, $p\to0$, and $p^{N-e}=q^{-hn(N-e)}\ge q^{-n/10}$, we have $D\to\infty$. 

Fix $A\in\mathcal G$. For each $Y\in\mathcal P$ with $A\le Y$, let $X_{A,Y}$ be the indicator of the event that $\qbinom{Y}{k}\cap\mathcal R$ is a copy of $\mathcal F$, conditioned on $A\in\mathcal R$. There are exactly $\lambda$ possible $e$-subsets producing such a copy, and each occurs with probability $(1-p)^{e-1}p^{N-e}$. Hence $\Pr[X_{A,Y}=1]=\lambda(1-p)^{e-1}p^{N-e}$.

If $Y_1,\ldots,Y_\ell\in\mathcal P$ all contain $A$, then the random variables $X_{A,Y_1},\ldots,X_{A,Y_\ell}$ are independent. Indeed, for $i\ne j$, we have $\qbinom{Y_i}{k}\cap\qbinom{Y_j}{k}=\{A\}$; otherwise, a second $k$-subspace $B\ne A$ contained in both $Y_i$ and $Y_j$ would make $A+B$ contain a $(k+1)$-subspace lying in two members of $\mathcal P$, contradicting the packing property.

Therefore, for every $A\in\mathcal G\cap\mathcal R$, the degree $d_{\mathcal J}(A)$ is a sum of independent Bernoulli variables with mean $\mu_A:=\mathbb E[d_{\mathcal J}(A)\mid A\in\mathcal R]=d_{\mathcal P}(A)\lambda(1-p)^{e-1}p^{N-e}$. Hence $(1-\sqrt\rho)D\le\mu_A\le D$. Moreover, for arbitrary $A\in\qbinom{V}{k}$, we still have $\mathbb E[d_{\mathcal J}(A)\mid A\in\mathcal R]\le D$.

If $A,B\in\mathcal R$ are distinct, then at most one member of $\mathcal P$ contains both $A$ and $B$, since otherwise $A+B$ would contain a $(k+1)$-subspace lying in two members of $\mathcal P$. Thus $d_{\mathcal J}(A,B)\le1$.

We next show that, for all sufficiently large $n$, there exists a realization of $\mathcal R$ satisfying:
\begin{itemize}
    \item[{\rm (a)}] $|\mathcal R|\ge(1-\varepsilon)Q$;
    \item[{\rm (b)}] $d_{\mathcal J}(A)=(1\pm\delta)D$ for every $A\in\mathcal G\cap\mathcal R$;
    \item[{\rm (c)}] $d_{\mathcal J}(A)<2D$ for every $A\in\mathcal R$.
\end{itemize}

For (a), since $p\to0$, we may assume $1-p\ge1-\varepsilon/2$. Then $\mathbb E|\mathcal R|=(1-p)Q\ge(1-\varepsilon/2)Q$, and Chernoff's bound gives $\Pr[|\mathcal R|<(1-\varepsilon)Q]=o(1)$.

For (b), fix $A\in\mathcal G$ and condition on $A\in\mathcal R$. Since $\sqrt\rho<\delta/4$, we have $\mu_A\ge(1-\delta/4)D$, and hence $(1-\delta)D\le(1-\delta/2)\mu_A$. Chernoff's bound gives $\Pr[d_{\mathcal J}(A)\notin(1\pm\delta)D\mid A\in\mathcal R]\le2e^{-c_\delta D}$ for some constant $c_\delta>0$. Since $Q$ is exponential in $n$, while $D=q^{\Omega(n)}$, a union bound over $A\in\mathcal G$ gives
\begin{equation*}
\Pr[\exists A\in\mathcal G\cap\mathcal R:~d_{\mathcal J}(A)\notin(1\pm\delta)D]=o(1).
\end{equation*}

For (c), for every fixed $A$, the conditional random variable $d_{\mathcal J}(A)\mid(A\in\mathcal R)$ is a sum of independent Bernoulli variables with mean at most $D$. Chernoff's bound again gives $\Pr[d_{\mathcal J}(A)\ge2D\mid A\in\mathcal R]\le e^{-D/3}$, and another union bound gives $\Pr[\exists A\in\mathcal R:~d_{\mathcal J}(A)\ge2D]=o(1)$.

Thus there is a realization of $\mathcal R$ satisfying (a)--(c). Fix such a realization. Since $|\qbinom{V}{k}\setminus\mathcal G|\le\sqrt\rho Q$, property (a) gives $|\mathcal R\setminus\mathcal G|\le \sqrt\rho Q\le \sqrt\rho|\mathcal R|/(1-\varepsilon)<\delta|\mathcal R|$. Hence all but at most $\delta|\mathcal R|$ vertices of $\mathcal J$ have degree $(1\pm\delta)D$. Moreover, by (c), every vertex has degree $<2D$, and, as shown above, every pair of distinct vertices has codegree $\le 1$.

Applying Lemma~\ref{lem:Rodl-nibble} to $\mathcal J$, we obtain a matching of size at least $(1-\varepsilon)|\mathcal R|/e$. By (a), this is at least $(1-\varepsilon)^2Q/e\ge(1-\gamma)Q/e$. As explained at the beginning, this matching yields pairwise disjoint copies $\mathcal F_1,\ldots,\mathcal F_m$ of $\mathcal F$ satisfying {\rm (i)} and {\rm (ii)}, with $m\ge(1-\gamma)\qbinom{n}{k}_q/e$. This completes the proof.
\end{proof}

\section*{Acknowledgements}
The authors are grateful to Bo Ning for bringing \cite{ning2020formula} to their attention, to Andrey Kupavskii for pointing out \cite{kolupaev2023erdHos}, and to Ferdinand Ihringer for sending them \cite{ihringer2026structure}. They also thank Jian Wang and Benjian Lv for several helpful discussions. The research is supported by the National Natural Science Foundation of China under Grant Nos. 12571352 and 12231014, and the Fundamental Research Funds for the Central Universities.

\bibliographystyle{plain}
\normalem
\bibliography{ref}

\appendix

\section{Parameter calculations for Theorem~\ref{thm:vemc}~{\rm (iii)}}\label{app:1}

We write $[a]_q:=\qbinom{a}{1}_q$. The following elementary estimate will be used repeatedly.

\begin{lemma}\label{lem:B-estimate}
Let $m,r,t$ be positive integers with $m\ge t+r$. Then
\begin{equation*}
\bigl(1-4q^{-(m-t-r+2)}\bigr)[t]_q\qbinom{m-1}{r-1}_q
\le B_q(m,r,t)\le [t]_q\qbinom{m-1}{r-1}_q .
\end{equation*}
\end{lemma}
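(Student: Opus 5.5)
The plan is to start from the closed form
\begin{equation*}
B_q(m,r,t)=\qbinom{m}{r}_q-q^{rt}\qbinom{m-t}{r}_q
\end{equation*}
and rewrite it as a telescoping sum using the $q$-Pascal identity $\qbinom{a}{r}_q=\qbinom{a-1}{r-1}_q+q^{r}\qbinom{a-1}{r}_q$. Applying this identity $t$ times gives
\begin{equation*}
\qbinom{m}{r}_q=\sum_{j=0}^{t-1}q^{rj}\qbinom{m-1-j}{r-1}_q+q^{rt}\qbinom{m-t}{r}_q,
\end{equation*}
and hence
\begin{equation*}
B_q(m,r,t)=\sum_{j=0}^{t-1}q^{rj}\qbinom{m-1-j}{r-1}_q.
\end{equation*}
Combinatorially, this is the count of $r$-subspaces meeting a fixed $t$-space, sorted by a first nonzero coordinate. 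Next, each term is compared to $q^{j}\qbinom{m-1}{r-1}_q$, because $[t]_q=\sum_{j<t}q^j$.

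For the upper bound, it suffices to show $q^{rj}\qbinom{m-1-j}{r-1}_q\le q^j\qbinom{m-1}{r-1}_q$ for each $j$, i.e.\ $q^{(r-1)j}\qbinom{m-1-j}{r-1}_q\le\qbinom{m-1}{r-1}_q$. This is immediate from Lemma~\ref{lem:counting}. The left side counts the $(r-1)$-subspaces of an $(m-1)$-space that meet a fixed $j$-space trivially, which is at most the total number of $(r-1)$-subspaces. Alternatively, one can compare the ratio $\prod_{i=0}^{r-2}\frac{q^{j}(q^{m-1-j-i}-1)}{q^{m-1-i}-1}$ termwise: each factor is at most $1$.

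For the lower bound, I will estimate that same ratio from below. We have
\begin{equation*}
\frac{q^{j}(q^{m-1-j-i}-1)}{q^{m-1-i}-1}\ge 1-q^{-(m-1-j-i)}\cdot\frac{q^{j}-1}{q^{j}}\cdot\frac{q^{m-1-i}}{q^{m-1-i}-1}\ge 1-2q^{-(m-1-j-i)},
\end{equation*}
or something of that shape. The product over $i=0,\dots,r-2$ is then at least $1-\sum_i 2q^{-(m-1-j-i)}$. The worst exponent is $m-j-r+1\ge m-t-r+2$ (using $j\le t-1$), and summing the geometric series in $i$ gives at most $\tfrac{2q}{q-1}q^{-(m-t-r+2)}\le 4q^{-(m-t-r+2)}$. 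Since this bound is uniform in $j$, summing over $j$ yields
\begin{equation*}
B_q(m,r,t)\ge\bigl(1-4q^{-(m-t-r+2)}\bigr)[t]_q\qbinom{m-1}{r-1}_q.
\end{equation*}

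The main obstacle is getting the constant $4$ cleanly. The factor $\frac{q^{m-1-i}}{q^{m-1-i}-1}\le 2$ together with the geometric factor $\frac{q}{q-1}\le2$ could overshoot, giving $8$ rather than $4$. So I would instead use the sharper estimate $\frac{q^{j}(q^{a-j}-1)}{q^{a}-1}=1-\frac{q^{j}-1}{q^{a}-1}\ge 1-q^{j-a}$, valid since $q^{j}-1\le q^{j-a}(q^a-1)$ when $q^{j-a}\le1$. The product is then $\ge 1-\sum_i q^{-(m-1-j-i)}\ge 1-\frac{q}{q-1}q^{-(m-j-r+1)}\ge 1-2q^{-(m-t-r+2)}$, which is comfortably within $4$. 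The hypothesis $m\ge t+r$ is what ensures all exponents are positive and the Gaussian coefficients are nondegenerate.
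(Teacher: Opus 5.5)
Your proposal is correct and follows essentially the same route as the paper: write $B_q(m,r,t)=\sum_{j=0}^{t-1}q^{rj}\qbinom{m-1-j}{r-1}_q$, compare each term with $q^j\qbinom{m-1}{r-1}_q$ through a product of factors each at most $1$, and bound that product from below by $1-\sum_i x_i$. Your factor estimate $1-\frac{q^j-1}{q^a-1}\ge 1-q^{j-a}$ is slightly sharper than the paper's $\frac{1-x}{1-y}\ge 1-2x$ and yields the constant $2$ in place of $4$. Your worry about a constant $8$ was unfounded, since your first estimate already gives $2\cdot\frac{q}{q-1}\le 4$.
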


\begin{proof}
By Lemma~\ref{lem:counting}, we have
\begin{equation*}
B_q(m,r,t)=\sum_{i=0}^{t-1}q^{ir}\qbinom{m-i-1}{r-1}_q .
\end{equation*}
For $0\le i\le t-1$, the product formula gives
\begin{equation*}
\frac{q^{ir}\qbinom{m-i-1}{r-1}_q}{\qbinom{m-1}{r-1}_q}
=
q^i\prod_{j=0}^{r-2}
\frac{1-q^{-(m-i-1-j)}}{1-q^{-(m-1-j)}} .
\end{equation*}
Each factor in the product is at most $1$, and hence the upper bound follows by summing $q^i$ over $0\le i\le t-1$.

For the lower bound, set $x_{i,j}:=q^{-(m-i-1-j)}$ and $y_j:=q^{-(m-1-j)}$. Then $0<y_j\le x_{i,j}<1$, and
\begin{equation*}
\frac{1-x_{i,j}}{1-y_j}\ge 1-2x_{i,j}.
\end{equation*}
Thus the product is at least $1-2\sum_{j=0}^{r-2}x_{i,j}$. Since $i\le t-1$, we have
\begin{equation*}
\sum_{j=0}^{r-2}x_{i,j}
\le q^{-(m-t-r+2)}(1+q^{-1}+\cdots+q^{-(r-2)})
\le 2q^{-(m-t-r+2)}.
\end{equation*}
Therefore the product is at least $1-4q^{-(m-t-r+2)}$. Summing over $i$ gives the desired lower bound.
\end{proof}

\begin{proposition}\label{prop:B-parameter}
Let $n,k,s$ be positive integers with $s\ge2$ and $k\ge2$, and let $q$ be a prime power. If $n\ge 2sk-s+k+1$, then
\begin{equation*}
\frac{B_q(n,k,s)}{[sk]_q}
>
B_q(n-1,k-1,sk).
\end{equation*}
\end{proposition}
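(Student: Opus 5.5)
We would prove the inequality by sandwiching both sides with Lemma~\ref{lem:B-estimate} and comparing the resulting Gaussian binomials. For the left side, apply the lower bound of Lemma~\ref{lem:B-estimate} with $(m,r,t)=(n,k,s)$. This is legitimate since $n\ge k+s$, and it gives
\begin{equation*}
B_q(n,k,s)\ge \bigl(1-4q^{-(n-s-k+2)}\bigr)[s]_q\qbinom{n-1}{k-1}_q .
\end{equation*}
For the right side, apply the upper bound of the same lemma with $(m,r,t)=(n-1,k-1,sk)$. This requires $n-1\ge sk+k-1$, which holds under our hypothesis, and it gives
\begin{equation*}
B_q(n-1,k-1,sk)\le [sk]_q\qbinom{n-2}{k-2}_q .
\end{equation*}

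Using $\qbinom{n-1}{k-1}_q=\frac{[n-1]_q}{[k-1]_q}\qbinom{n-2}{k-2}_q$, it then suffices to show
\begin{equation*}
\bigl(1-4q^{-(n-s-k+2)}\bigr)[s]_q[n-1]_q>[sk]_q^2[k-1]_q .
\end{equation*}
Up to constant factors, the left-hand side is about $q^{s-1}\cdot q^{n-2}\cdot\frac{q^2}{(q-1)^2}$, and the right-hand side is about $q^{2sk-2}\cdot q^{k-2}\cdot\frac{q^3}{(q-1)^3}$. Comparing exponents, we need roughly $n+s\ge 2sk+k+\text{const}$, which is $n\ge 2sk-s+k+O(1)$. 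This matches the hypothesis $n\ge 2sk-s+k+1$, so the constants must be tracked carefully.

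To track them, use the exact forms $[a]_q=(q^a-1)/(q-1)$ and clear the common denominators. After multiplying by $(q-1)^3$, the target becomes
\begin{equation*}
\bigl(1-4q^{-(n-s-k+2)}\bigr)(q^s-1)(q^{n-1}-1)(q-1)>(q^{sk}-1)^2(q^{k-1}-1).
\end{equation*}
The right-hand side is less than $q^{2sk+k-1}$. For the left-hand side, set $n=2sk-s+k+1+j$ with $j\ge 0$. Then $q^{s}\cdot q^{n-1}=q^{2sk+k+j}$, so the left-hand side is at least
\begin{equation*}
q^{2sk+k+j}(q-1)\bigl(1-q^{-s}\bigr)\bigl(1-q^{-(n-1)}\bigr)\bigl(1-4q^{-(n-s-k+2)}\bigr).
\end{equation*}
It therefore suffices to check
\begin{equation*}
q(q-1)\bigl(1-q^{-s}\bigr)\bigl(1-q^{-(n-1)}\bigr)\bigl(1-4q^{-(n-s-k+2)}\bigr)>1 .
\end{equation*}
Since $s,k\ge2$, we have $n-s-k+2\ge 2sk-2s+3\ge 7$, so the last factor exceeds $1-4/2^{7}$. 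For $q\ge 3$ we have $q(q-1)\ge 6$, and the inequality holds easily.

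The main obstacle is the case $q=2$. There $q(q-1)=2$ and $1-q^{-s}\ge 3/4$, so the product is at least $2\cdot\frac34\cdot(1-2^{-n+1})(1-2^{-5})\approx 1.45>1$, which should still suffice. If the crude bound $(q^{sk}-1)^2<q^{2sk}$ turns out too lossy anywhere, we would fall back on the exact sum $B_q(n-1,k-1,sk)=\sum_{i<sk}q^{i(k-1)}\qbinom{n-2-i}{k-2}_q$ and bound it directly. I expect this not to be needed.
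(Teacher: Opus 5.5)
Your proposal is correct and follows essentially the same route as the paper: bound $B_q(n,k,s)$ from below and $B_q(n-1,k-1,sk)$ from above via Lemma~\ref{lem:B-estimate}, use $\qbinom{n-1}{k-1}_q=\frac{[n-1]_q}{[k-1]_q}\qbinom{n-2}{k-2}_q$, and check the resulting elementary inequality. The only difference is the last step, where you clear denominators while the paper uses $[n-1]_q/[k-1]_q>q^{n-k}$ and $[sk]_q/[s]_q<\frac43 q^{s(k-1)}$; your $q=2$ computation already closes the argument with no fallback needed, provided you state explicitly that $n\ge 9$, so that $1-2^{-(n-1)}\ge \frac{255}{256}$ and the product is at least $2\cdot\frac34\cdot\frac{255}{256}\cdot\frac{31}{32}>1$.
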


\begin{proof}
By Lemma~\ref{lem:B-estimate}, $B_q(n-1,k-1,sk)\le [sk]_q\qbinom{n-2}{k-2}_q$. Hence it is enough to prove
\begin{equation*}
B_q(n,k,s)>[sk]_q^2\qbinom{n-2}{k-2}_q .
\end{equation*}
Applying Lemma~\ref{lem:B-estimate} to $B_q(n,k,s)$, we get
\begin{equation*}
B_q(n,k,s)\ge
\bigl(1-4q^{-(n-s-k+2)}\bigr)[s]_q\qbinom{n-1}{k-1}_q .
\end{equation*}
Since $\qbinom{n-1}{k-1}_q=([n-1]_q/[k-1]_q)\qbinom{n-2}{k-2}_q$, it remains to show
\begin{equation*}
\bigl(1-4q^{-(n-s-k+2)}\bigr)[s]_q\frac{[n-1]_q}{[k-1]_q}>[sk]_q^2 .
\end{equation*}
We have $[n-1]_q/[k-1]_q>q^{n-k}$, $[sk]_q<q^{sk}$, and, since $q^s\ge4$,
\begin{equation*}
\frac{[sk]_q}{[s]_q}
=
\frac{q^{sk}-1}{q^s-1}
<
\frac{q^{sk}}{q^s-1}
\le \frac43 q^{s(k-1)}.
\end{equation*}
Thus $[sk]_q^2/[s]_q<(4/3)q^{2sk-s}$. On the other hand, $n\ge2sk-s+k+1$ gives $q^{n-k}\ge q^{2sk-s+1}$, and $n-s-k+2\ge2s(k-1)+3\ge7$. Hence $1-4q^{-(n-s-k+2)}>3/4$. Therefore
\begin{equation*}
\bigl(1-4q^{-(n-s-k+2)}\bigr)\frac{[n-1]_q}{[k-1]_q}
>
\frac{3}{4}q^{2sk-s+1}\geq\frac{3}{2}q^{2sk-s}
>
\frac43 q^{2sk-s}
>
\frac{[sk]_q^2}{[s]_q}.
\end{equation*}
This proves the proposition.
\end{proof}

\section{Parameter calculations for Theorem~\ref{thm:q-HM}}\label{app:2}

We again use Lemma~\ref{lem:B-estimate}.

\begin{proposition}\label{prop:H-parameter}
Let $n,k,s$ be positive integers with $s\ge2$ and $k\ge2$, and let $q$ be a prime power. If $n\ge 2sk-s+k+3$, then
\begin{equation*}
\frac{H_q(n,k,s)}{[sk]_q}
>
B_q(n-1,k-1,sk).
\end{equation*}
\end{proposition}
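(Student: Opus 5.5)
We follow the same scheme as the proof of Proposition~\ref{prop:B-parameter}, with one change: we replace $B_q(n,k,s)$ by a lower bound for $H_q(n,k,s)$ obtained from Proposition~\ref{prop:H_q(n,k,s)}~(ii). Write
\begin{equation*}
H_q(n,k,s)=B_q(n,k,s-1)+q^{k(s-1)}B_q(n-s,k-1,k)+q^{ks}.
\end{equation*}
All three terms are nonnegative. By Lemma~\ref{lem:B-estimate} applied with $(m,r,t)=(n-1,k-1,sk)$ we have $B_q(n-1,k-1,sk)\le [sk]_q\qbinom{n-2}{k-2}_q$. So it suffices to show
\begin{equation*}
H_q(n,k,s)>[sk]_q^2\qbinom{n-2}{k-2}_q .
\end{equation*}

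The main term should be $B_q(n,k,s-1)$. Since $s\ge 2$, we have $s-1\ge1$, so Lemma~\ref{lem:B-estimate} (with $m=n$, $r=k$, $t=s-1$, valid since $n\ge k+s-1$) gives
\begin{equation*}
B_q(n,k,s-1)\ge \bigl(1-4q^{-(n-s-k+3)}\bigr)[s-1]_q\qbinom{n-1}{k-1}_q .
\end{equation*}
We drop the last two terms of $H_q$; keeping them only helps. Using $\qbinom{n-1}{k-1}_q=([n-1]_q/[k-1]_q)\qbinom{n-2}{k-2}_q$, the goal reduces to
\begin{equation*}
\bigl(1-4q^{-(n-s-k+3)}\bigr)[s-1]_q\frac{[n-1]_q}{[k-1]_q}>[sk]_q^2 .
\end{equation*}

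We estimate as before. We have $[n-1]_q/[k-1]_q>q^{n-k}$ and $[sk]_q<q^{sk}/(q-1)$. The factor $[s-1]_q$ is at least $q^{s-2}$; more precisely $[s-1]_q\ge q^{s-2}\cdot q/(q-1)\cdot(1-q^{-(s-1)})$, though the crude bound $q^{s-2}$ should suffice. The right side is at most $q^{2sk}/(q-1)^2$. The left side is at least $(3/4)q^{s-2}q^{n-k}$, because $n-s-k+3$ is large, so $1-4q^{-(n-s-k+3)}\ge 3/4$.

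It therefore suffices that
\begin{equation*}
\tfrac34 q^{n-k+s-2}(q-1)^2>q^{2sk},
\end{equation*}
that is, $q^{n-k+s-2-2sk}>\frac{4}{3(q-1)^2}$. The hypothesis $n\ge 2sk-s+k+3$ gives the exponent $n-k+s-2-2sk\ge1$. Hence the left side is at least $q\ge 2$. The right side is at most $4/3$ (it equals $4/3$ at $q=2$). This closes the inequality.

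The only delicate point is the case $q=2$, where the $(q-1)$ factors give no slack. I would handle it by using $[sk]_q\le q^{sk}$ (valid since $[a]_2=2^a-1<2^a$) rather than the $(q-1)$-denominator bound. The requirement then becomes $\tfrac34 q^{s-2}q^{n-k}>q^{2sk}$, i.e.\ $q^{n-k+s-2-2sk}>4/3$. This holds for $q=2$ because the exponent is at least $1$. The extra $+2$ in the hypothesis, compared with Proposition~\ref{prop:B-parameter}, is exactly what compensates for replacing $[s]_q$ by $[s-1]_q\approx q^{-1}[s]_q$ and for the factor $3/4$. I expect this bookkeeping of constants to be the only real obstacle, and the routine checks to confirm that the margin is positive for all $q$.
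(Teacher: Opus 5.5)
Your proposal is correct and follows essentially the same route as the paper: bound $B_q(n-1,k-1,sk)$ above by $[sk]_q\qbinom{n-2}{k-2}_q$, bound $H_q(n,k,s)\ge B_q(n,k,s-1)$ below via Lemma~\ref{lem:B-estimate}, and compare powers of $q$ (the paper uses the factor $1/2$ and $[sk]_q<q^{sk}$, while you use $3/4$ and $[sk]_q<q^{sk}/(q-1)$). Your separate worry about $q=2$ is unnecessary, because your first computation already closes for every $q\ge 2$, and at $q=2$ the two bounds on $[sk]_q$ coincide.
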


\begin{proof}
By Lemma~\ref{lem:B-estimate}, it is at most $[sk]_q\qbinom{n-2}{k-2}_q$. Hence it is enough to prove
\begin{equation*}
H_q(n,k,s)>[sk]_q^2\qbinom{n-2}{k-2}_q .
\end{equation*}
Since $H_q(n,k,s)\ge B_q(n,k,s-1)$, Lemma~\ref{lem:B-estimate} gives
\begin{equation*}
H_q(n,k,s)\ge
\bigl(1-4q^{-(n-s-k+3)}\bigr)[s-1]_q\qbinom{n-1}{k-1}_q .
\end{equation*}
Using $\qbinom{n-1}{k-1}_q=([n-1]_q/[k-1]_q)\qbinom{n-2}{k-2}_q$, it remains to show
\begin{equation*}
\bigl(1-4q^{-(n-s-k+3)}\bigr)[s-1]_q\frac{[n-1]_q}{[k-1]_q}>[sk]_q^2 .
\end{equation*}
We have $[n-1]_q/[k-1]_q>q^{n-k}$, $[sk]_q<q^{sk}$, and $[s-1]_q\ge q^{s-2}$. Hence $[sk]_q^2/[s-1]_q<q^{2sk-s+2}$. Meanwhile, $n\ge2sk-s+k+3$ gives $q^{n-k}\ge q^{2sk-s+3}$, and $n-s-k+3\ge2s(k-1)+6\ge10$. Thus $1-4q^{-(n-s-k+3)}>1/2$. Consequently
\begin{equation*}
\bigl(1-4q^{-(n-s-k+3)}\bigr)\frac{[n-1]_q}{[k-1]_q}
>
\frac{1}{2}q^{2sk-s+3}
\ge
q^{2sk-s+2}
>
\frac{[sk]_q^2}{[s-1]_q}.
\end{equation*}
This proves the proposition.
\end{proof}

\end{document}